\def\l@section{\@tocline{1}{10pt}{1pc}{}{}}
\def\l@subsection{\@tocline{2}{0pt}{1pc}{4.6em}{}}
\def\l@subsubsection{\@tocline{3}{0pt}{1pc}{7.6em}{}}
\renewcommand{\tocsection}[3]{%
  \indentlabel{\@ifnotempty{#2}{\makebox[2.3em][l]{%
    \ignorespaces#1 #2.\hfill}}}\textbf{#3}}
\renewcommand{\tocsubsection}[3]{%
  \indentlabel{\@ifnotempty{#2}{\hspace*{2.3em}\makebox[2.3em][l]{%
    \ignorespaces#1 #2.\hfill}}}#3}
\renewcommand{\tocsubsubsection}[3]{%
  \indentlabel{\@ifnotempty{#2}{\hspace*{4.6em}\makebox[3em][l]{%
    \ignorespaces#1 #2.\hfill}}}#3}
\newcommand{\MM}{\mathcal{M}}
\newcommand{\IR}{\mathbb{R}}
\newcommand{\IH}{\mathbb{H}}
\newcommand{\IZ}{\mathbb{Z}}
\newcommand{\ov}[1]{\overline{#1}}
\newcommand{\td}[1]{\widetilde{#1}}
\DeclareMathOperator{\Klein}{Klein}
\DeclareMathOperator{\Int}{Int}
\DeclareMathOperator{\area}{area}
\DeclareMathOperator{\tr}{tr}
\DeclareMathOperator{\dist}{dist}
\newcommand{\dotcup}{\ensuremath{\mathaccent\cdot\cup}}
\newcommand{\EMPTY}[1]{}
\newtheorem{Theorem}{Theorem}[section]
\newtheorem{Lemma}[Theorem]{Lemma}
\newtheorem{Proposition}[Theorem]{Proposition}
\newtheorem{Definition}[Theorem]{Definition}
\numberwithin{equation}{section}
\title[Long-time behavior of 3d Ricci flow --- C]{Long-time behavior of 3 dimensional Ricci flow\\C: 3-manifold topology and combinatorics of simplicial complexes in $3$-manifolds}
\author{Richard H Bamler}
\address{UC Berkeley, Department of Mathematics, 970 Evans Hall, Berkeley, CA 94720, USA}
\email{rbamler@math.berkeley.edu}
\date{\today}
\begin{document}
\begin{abstract}
In the third part of this series of papers, we establish several topological results that will become important for studying the long-time behavior of Ricci flows with surgery.
In the first part of this paper we recall some elementary observations in the topology of $3$-manifolds.
The main part is devoted to the construction of certain simplicial complexes in a given 3-manifold that exhibit useful intersection properties with embedded, incompressible solid tori.

This paper is purely topological in nature and Ricci flows will not be used.
\end{abstract}

\maketitle
\tableofcontents

\section{Introduction}
In this paper we establish several topological results that will be needed in the last part \cite{Bamler-LT-main} of this series of papers.

In the first part, section \ref{sec:3dtopology}, we recall facts from the topology of 3-manifolds, which will be frequently used in this and the subsequent paper.

In the second part of this paper, section \ref{sec:constructanalysispolygonalcomplex}, we prove a rather combinatorial-topological result (see Proposition \ref{Prop:maincombinatorialresult}).
For clarity, we will first describe a much weaker version of this result.

Consider a closed $3$-manifold $M$ that does not contain any hyperbolic pieces in its geometric decomposition, e.g. a component of the thin part $\MM_{\textnormal{thin}} (t)$ from \cite[Proposition \ref{Prop:thickthindec}]{Bamler-LT-Perelman}.
We claim that then there is a finite, $2$-dimensional simplicial complex $V$, as well as a continuous map $f_0 : V \to M$ with the following property: Whenever $\sigma \subset M$ is an embedded incompressible loop in $M$ (meaning that the fundamental group of $\sigma$ injects into the fundamental group of $M$), then $\sigma$ intersects the image of every continuous map $f : V \to M$ that is homotopic to $f_0$.

In the case in which $M$ is a $3$-torus, this statement follows in an elementary way: 
we can choose $V$ to be the disjoint union of three $2$-tori and $f_0 : V \to M$ to be an embedding that maps those $2$-tori to $2$-tori that generate the second homology of $M$.
The fact that every homotope of $f_0$ intersects every non-contractible loop of $M$ can then be seen by a standard intersection number argument.

For the purposes of \cite{Bamler-LT-main}, we will however need a somewhat stronger statement, which as it turns out, is far more difficult to prove.
This statement is also captured by Proposition \ref{Prop:maincombinatorialresult}.
We will now describe this stronger statement in a somewhat restricted setting.

Assume again that $M$ is a closed $3$-manifold that cannot be covered by a $2$-torus bundle over a circle.
Then we claim that there is a finite, $2$-dimensional simplicial complex $V$ and a continuous map $f_0 : V \to M$ such that the following holds:
Consider an arbitrary solid torus $S \subset M$, $S \approx S^1 \times D^2$ that is incompressible in $M$ (i.e. whose $S^1$-fibers are incompressible in $M$) and a map $f : V \to M$ that is homotopic to $f_0$.
Moreover, consider an arbitrary Riemannian metric $g$ on $M$.
Then there is a ``compressing domain'' for $S$ whose area, with respect to $g$, is bounded in terms of the area of $f$
By this we mean that there is a smooth domain $\Sigma \subset \IR^2$ and a smooth map $h : \Sigma \to M$ such that $h(\partial \Sigma) \subset \partial S$ and such that $h$ restricted to the exterior circle of $\Sigma$ is non-contractible in $\partial S$ and $h$ restricted to each interior circle of $\Sigma$ is contractible in $\partial S$.
This map $h$ can be chose such that its area with respect to $g$ satisfies the bound
\[ \area_g h < C \area_g f, \]
where $C$ is a constant that only depends on the topology of $M$.
Essentially, $h$ will arise from the intersection $f(V) \cap S$, taking into account multiple overlaps and multiply counted faces.
Note that this intersection can a priori be arbitrarily complex, without any bound on the number of edges, and it is a difficult task to extract a compressing domain out of it whose area is sufficiently controlled.

It is not known to the author whether such a statement remains true if $M$ is covered by a $2$-torus bundle over a circle.
In this case, however, we can make use of the special topology of $M$ and we will be able to prove a different statement, which will be sufficient for the arguments in \cite{Bamler-LT-main}:
More specifically, we will construct a sequence of continuous maps
\[ f_1, f_2, \ldots : V \to M \]
with the same simplicial complex $V$ as a domain such that for every embedded, incompressible loop $\sigma \subset M$, every $n \geq 1$ and every map $f'_n : V \to M$ that is homotopic to  $f_n : V \to M$, the image $f'_n (V)$ intersects $\sigma$ at least $n$ times.
By this we mean that $f^{\prime -1}_n(\sigma)$ contains at least $n$ points.

We refer to \cite{Bamler-LT-Introduction} for historical remarks and acknowledgements.

In the following we will assume that all manifolds are orientable and $3$ dimensional, unless stated otherwise.

\section{Preliminaries on $3$-dimensional topology} \label{sec:3dtopology}
In this section we present important topological facts, which we will frequently use in the course of this and the subsequent paper \cite{Bamler-LT-main}.
A more elaborate discussion of most of these results can be found in \cite{Hat}.
In the following, all manifolds are assumed to be connected and $3$-dimensional.

\begin{Definition}[prime manifold]
A manifold $M$ is called \emph{prime}, if it cannot be represented as a connected sum $M = M_1 \# M_2$ of two manifolds $M_1$, $M_2$ that are not diffeomorphic to spheres ($\approx S^3$).
\end{Definition}

\begin{Definition}[irreducible manifold] \label{Def:irreducible}
A manifold $M$ is called \emph{irreducible}, if every smoothly embedded $2$-sphere $S \subset M$ bounds a smoothly embedded $3$-disk $D \subset M$, $\partial D = S$.
\end{Definition}

Recall that a manifold is prime if and only if it is either irreducible or diffeomorphic to $S^1 \times S^2$ (cf \cite[Proposition 1.4]{Hat}).
We furthermore have

\begin{Proposition} \label{Prop:pi2irred}
An orientable manifold $M$ is irreducible if and only if $\pi_2 (M) = 0$.
\end{Proposition}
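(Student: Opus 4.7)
The plan is to prove the two implications separately, with the Sphere Theorem of Papakyriakopoulos being the key input.

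For the forward direction, I would argue contrapositively. Suppose $\pi_2(M) \neq 0$. Since $M$ is orientable, the Sphere Theorem produces a smoothly embedded 2-sphere $S \subset M$ whose class in $\pi_2(M)$ is nonzero. If $S$ were to bound a smoothly embedded 3-disk $D \subset M$, then $D$ would extend the inclusion $S \hookrightarrow M$ to a continuous map of $D^3$ into $M$, providing a null-homotopy of $S$ and contradicting $[S] \neq 0$. Hence $M$ fails to be irreducible.

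For the backward direction, I would also argue contrapositively: assume there is a smoothly embedded 2-sphere $S \subset M$ that does not bound a 3-disk, and deduce $\pi_2(M) \neq 0$. Since $\pi_2(M) \cong \pi_2(\td M) \cong H_2(\td M;\IZ)$ via the Hurewicz theorem applied to the simply connected universal cover $p : \td M \to M$, it suffices to produce a nonzero class in $H_2(\td M)$. The natural candidates are the connected components of $p^{-1}(S) \subset \td M$, each of which is an embedded 2-sphere (embeddedness of $S$ in $M$ lifts locally).

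The argument splits by whether $S$ is separating. In the separating case, write $M \setminus S = A \sqcup B$; capping both closures along $S$ with 3-disks yields closed 3-manifolds $N_1, N_2$ with $M \cong N_1 \# N_2$, and by hypothesis neither $N_i$ is diffeomorphic to $S^3$. A Mayer--Vietoris / Bass--Serre-style analysis of $\td M$, which is assembled from equivariant families of copies of $\td N_1$ and $\td N_2$ with open balls removed and glued along lifts of $S$, produces a lift of $S$ (or a finite sum of lifts) representing a nonzero class in $H_2(\td M)$. In the non-separating case, a loop in $M$ crossing $S$ an odd number of times lifts to a path in $\td M$ whose endpoints lie in different components of $\td M \setminus p^{-1}(S)$; this produces a nontrivial cycle in the dual graph of $p^{-1}(S) \subset \td M$ and hence a nonzero 2-cycle built from lifts of $S$.

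The principal obstacle is this last combinatorial step: extracting a genuinely nontrivial 2-cycle in $H_2(\td M)$ from the dual-graph / Mayer--Vietoris data, particularly when $\pi_1(M)$ is small and only finitely many lifts of $S$ are available. The cleanest bookkeeping proceeds via the Bass--Serre decomposition of $\td M$ associated to the family $p^{-1}(S)$, reducing the problem to checking that at least one edge stabilizer fails to be trivial for geometric reasons. These details are classical and worked out in Hatcher's 3-manifold notes, which the paper already cites.
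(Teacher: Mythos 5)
Your forward direction (irreducible $\Rightarrow \pi_2(M)=0$) is correct and is exactly the paper's argument: the Sphere Theorem gives an embedded sphere representing a nonzero class of $\pi_2(M)$, and such a sphere cannot bound a disk, since a disk would furnish a nullhomotopy.

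The backward direction has a genuine gap. The implication ``$\pi_2(M)=0\Rightarrow M$ irreducible'' is not a covering-space/Bass--Serre fact; it depends essentially on the Poincar\'e Conjecture, which your sketch never invokes and which the paper uses explicitly. Here is where your proposed argument would break. Suppose, hypothetically, that $\Sigma$ were a closed simply connected $3$-manifold not diffeomorphic to $S^3$, and take $N$ irreducible, not $S^3$, with $\pi_2(N)=0$ (say $N=T^3$). Set $M=N\#\Sigma$ with connect-sum sphere $S$. Then $\Sigma\setminus B^3$ is a compact, simply connected manifold with the homology of a point, hence contractible, so $M$ is homotopy equivalent to $N$ and $\pi_2(M)=0$. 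Yet $S$ bounds a disk on neither side (neither $N\setminus B^3$ nor $\Sigma\setminus B^3$ is a ball), so $M$ is not irreducible. In this situation $S$ is null-homotopic, every lift of $S$ to $\td{M}$ bounds a contractible region and is null-homologous, and the nonzero class in $H_2(\td{M})$ you want to extract simply does not exist; no dual-graph or edge-stabilizer bookkeeping can manufacture it. So the step you flag as ``the principal obstacle'' is not a classical detail to be delegated to a reference --- it is precisely where Perelman's theorem must enter. The paper's route is structurally different and avoids the issue: given $\pi_2(M)=0$, an embedded sphere $S$ is contractible, so by a surgery argument (Hatcher's Proposition~3.10) it bounds a compact \emph{contractible} submanifold $N$; capping $N$ with a $3$-disk yields a closed simply connected $3$-manifold, which by the Poincar\'e Conjecture is $S^3$, whence $N$ is a $3$-ball. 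You should either adopt that route or, if you want to keep the cut-and-cover framework, isolate the case where a piece is simply connected and invoke Poincar\'e there. (Separately and more minorly: your non-separating case is overbuilt --- a non-separating embedded $S^2$ already has nonzero class in $H_2(M;\IZ)$, and since it is spherical that class is in the image of the Hurewicz map, so $\pi_2(M)\neq 0$ with no passage to $\td{M}$.)
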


\begin{proof}
The backward direction follows from the Sphere Theorem (cf \cite[Theorem 3.8]{Hat}).
For the forward direction suppose that $\pi_2(M) = 0$ and let $S \subset M$ be a smoothly embedded sphere.
So $S$ is contractible and by \cite[Proposition 3.10]{Hat}, it bounds a compact contractible submanifold $N \subset M$.
Following \cite[Proposition 3.7]{Hat}, we conclude that if we attach a $3$-disk to $N$, we obtain a closed, simply-connected manifold $M'$.
By the resolution of the Poincar\'e Conjecture, $M' \approx S^3$ and hence $N$ is a $3$-disk.
\end{proof}

\begin{Definition}[incompressibility] \label{Def:incompressible}
Let $X$ be a topological space and $Y \subset X$ a connected subspace.
Then we call $Y$ \emph{(algebraically) incompressible in $X$} if the induced map $\pi_1(Y) \to \pi_1(X)$ is injective.
Otherwise, we call $Y$ \emph{(algebraically) compressible}.
\end{Definition}

\begin{Proposition}[Dehn's Lemma] \label{Prop:spanningdiskexists}
Let $M$ be a manifold with boundary.
If $C \subset \partial M$ is an embedded loop that is nullhomotopic in $M$, then $C$ bounds an embedded disk $D$ in $M$, i.e. $\partial D = C$ and $D \cap \partial M = C$.
\end{Proposition}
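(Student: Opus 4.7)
The plan is to recognize this statement as a form of the classical Dehn's Lemma of Papakyriakopoulos and to split the argument into two cases, according to whether or not $C$ is nullhomotopic in $\partial M$.

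First I would dispose of the easy case in which $C$ is nullhomotopic in $\partial M$. Since $\partial M$ is a $2$-manifold, an embedded nullhomotopic simple closed curve in $\partial M$ bounds an embedded disk $D_0 \subset \partial M$ by the classification of surfaces (equivalently, by lifting $C$ to the universal cover of the component of $\partial M$ containing it and applying the Jordan Curve Theorem). Using a collar neighborhood $\partial M \times [0, 1) \hookrightarrow M$, I would then push the interior of $D_0$ off of $\partial M$ while keeping $C$ fixed, for instance by setting $D = \{(x, \rho(x)) : x \in D_0\}$ for a continuous $\rho : D_0 \to [0,1)$ with $\rho^{-1}(0) = C$. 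This produces a disk $D$ with $\partial D = C$ and $D \cap \partial M = C$.

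The substantive case is when $C$ is essential in $\partial M$. Then the hypothesis that $C$ is nullhomotopic in $M$ means that $C$ represents a nontrivial element of $\ker(\pi_1(\partial M) \to \pi_1(M))$. Here I would invoke Dehn's Lemma in the following form: any embedded simple closed curve in $\partial M$ that is nullhomotopic in $M$ bounds a properly embedded disk in $M$, i.e.\ a disk $D$ with $\partial D = D \cap \partial M = C$. This immediately gives the desired $D$.

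The main obstacle is this second case, which depends on the deep classical theorem of Papakyriakopoulos, proved via the tower construction. I would not attempt to reproduce its proof within these preliminaries, but simply cite \cite{Hat} for the precise statement in the form needed. A small technical subtlety is that one standard formulation of Dehn's Lemma only yields an embedded disk with $\partial D = C$ without guaranteeing that $\textup{int}(D) \cap \partial M = \emptyset$; if one works from such a version, one also needs a short innermost-disk argument on $D \cap \partial M$ (using a collar of $\partial M$) to achieve the boundary condition $D \cap \partial M = C$. Once this is in place, the proposition follows.
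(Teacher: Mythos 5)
Your proposal is correct and takes the same approach as the paper, which simply cites \cite[Corollary 3.2]{Hat} (the Dehn's Lemma / Loop Theorem corollary) for this statement. Your case split and the innermost-disk remark are a reasonable unpacking, but both routes ultimately rest on the same citation to Hatcher.
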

\begin{proof}
See \cite[Corollary 3.2]{Hat}.
\end{proof}

\begin{Proposition} \label{Prop:incompressibleequiv}
Let $M$ be a manifold (possibly with boundary) and $S \subset M$ a $2$-sided embedded, connected surface.
Then $S$ is algebraically compressible if and only if there is an embedded loop $C \subset S$ that is homotopically non-trivial in $S$ and that bounds an embedded \emph{compressing disk} $D \subset M$ which meets $S$ only in its boundary, i.e. $\partial D = C$ and $D \cap S = C$.

In particular, the statement holds if $S = \partial M$.
\end{Proposition}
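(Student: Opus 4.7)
The backward direction is immediate from Definition~\ref{Def:incompressible}: if a compressing disk $D$ with $\partial D = C$ exists, then $C$ is homotopically non-trivial in $S$ and bounds $D$ in $M$, so it represents a non-trivial element of $\ker(\pi_1(S) \to \pi_1(M))$, witnessing the algebraic compressibility of $S$.

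For the forward direction the central tool I would invoke is the Loop Theorem of Papakyriakopoulos (\cite[Theorem~3.1]{Hat}): if $N \subset \partial M$ is a subsurface with $\ker(\pi_1(N) \to \pi_1(M)) \neq 0$, then there exists an embedded disk $D \subset M$ with $\partial D \subset N$ non-trivial in $\pi_1(N)$ and $D \cap \partial M = \partial D$. When $S = \partial M$ this yields the conclusion of the Proposition directly, and in particular settles the last sentence of the statement. In general the plan is to reduce to this boundary situation by cutting $M$ along $S$: using the $2$-sidedness of $S$, choose a tubular neighborhood $S \times (-\eps, \eps) \subset M$ and set $M' := M \setminus (S \times (-\eps/2, \eps/2))$, so that $\partial M'$ contains either one or two copies $S_\pm$ of $S$, according to whether $S$ is non-separating or separating in $M$.

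The only non-formal step is then to show that $\ker(\pi_1(S_\pm) \to \pi_1(M')) \neq 0$ for at least one copy $S_\pm$, so that the Loop Theorem applies to $(M', S_\pm)$. I would derive this from standard Bass--Serre / Seifert--van Kampen theory: in the separating case $M = M_1 \cup_S M_2$ gives an amalgamated product $\pi_1(M) \cong \pi_1(M_1) *_{\pi_1(S)} \pi_1(M_2)$, while in the non-separating case one has an HNN-extension description $\pi_1(M) \cong \pi_1(M') *_{\pi_1(S)}$ with respect to the two inclusions $S_\pm \hookrightarrow M'$. The standard fact that the canonical map from the edge group into such a construction is injective whenever the two edge inclusions are gives the contrapositive we need: a non-trivial element of $\ker(\pi_1(S) \to \pi_1(M))$ forces a non-trivial kernel of at least one edge map $\pi_1(S_\pm) \to \pi_1(M')$. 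Applying the Loop Theorem to $(M', S_\pm)$ then produces an embedded compressing disk in $M'$, which can be pushed slightly back into $M$ through the collar of $S$ to yield the disk $D$ required by the Proposition. The main technical delicacy lies in this kernel-transfer step (and in keeping track of basepoints in the two cases); once it is handled, everything else is either a direct application of the Loop Theorem or a formal consequence of $2$-sidedness.
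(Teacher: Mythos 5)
Your argument is correct, and it is worth noting that you supply a genuine proof where the paper's own proof is nothing more than the citation ``See \cite[Corollary 3.3]{Hat}.'' The backward direction is immediate. For the forward direction your global strategy---cut $M$ along $S$ to place (one or two copies of) $S$ in the boundary, establish that at least one of the induced maps $\pi_1(S_\pm)\to\pi_1(M')$ has non-trivial kernel, apply the Loop Theorem, and push the resulting disk back through the collar---is exactly the standard one, and the one used to prove the result cited by the paper. Where you genuinely diverge from the usual write-up is in the kernel-transfer step: you use Bass--Serre theory, i.e.\ the normal form theorem for amalgamated products (separating case) and Britton's lemma for HNN extensions (non-separating case), to argue that if both edge maps $\pi_1(S_\pm)\to\pi_1(M')$ were injective then so would $\pi_1(S)\to\pi_1(M)$ be, and then take the contrapositive. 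This is valid, and arguably cleaner than the argument one finds in Hatcher: there one instead takes a singular disk $f:D^2\to M$ bounding an $S$-essential, $M$-inessential loop, makes $f$ transverse to $S$, and does innermost-circle surgery until $f^{-1}(S)=\partial D^2$, at which point the image of $f$ lives in $M'$. Both approaches verify precisely the hypothesis that the Loop Theorem requires, so they are interchangeable here. One small caution, which you implicitly already handle: the amalgamated-product/HNN presentations of $\pi_1(M)$ coming from van Kampen are valid regardless of whether the edge maps inject; it is only the normal-form/Britton conclusion that needs injectivity, and that is what makes the contrapositive go through. Modulo the usual basepoint bookkeeping that you flag, your proof is complete.
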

\begin{proof}
See \cite[Corollary 3.3]{Hat}.
\end{proof}

We can now define what we understand by a geometric decomposition.

\begin{Definition}[Geometric decomposition of irreducible manifold] \label{Def:geomdec}
Let $M$ be a compact, orientable and irreducible 3-manifold whose boundary consists of $2$-tori.
A \emph{geometric decomposition} of $M$ is a collection of pairwise disjoint, smoothly embedded $2$-tori $T_1, \ldots, T_m \subset M$ such that
\begin{enumerate}[label=(\roman*)]
\item each torus $T_i$ is incompressible in $M$ (see Definition \ref{Def:incompressible}) and
\item each component of $M \setminus (T_1 \cup \ldots \cup T_m)$ is either \emph{hyperbolic} (i.e. it can be endowed with a complete metric of constant negative sectional curvature and finite volume) or it is \emph{Seifert} (i.e. it carries a Seifert fibration that can be extended regularly to the closure of each of its ends\footnote{Note that it may happen that the boundaries of two ends of such a component coincide.
In this case it may not be possible to extend a (or any) Seifert fibration of this component to its closure, because the extensions of the fibration on the closure of each end may not be the same.
Note also that a Seifert fibration on an orientable manifold can only have exceptional fibers of cone-type.}).
\end{enumerate}
The decomposition is called \emph{minimal} if no smaller subcollection of tori satisfies properties (i) and (ii).

If all components of $M \setminus (T_1 \cup \ldots \cup T_m)$ are Seifert, then the manifold is called \emph{(prime) graph manifold} and the decomposition is called a \emph{Seifert decomposition}.
\end{Definition}

Note that for a minimal geometric decomposition, none of the components of $M \setminus (T_1 \cup \ldots \cup T_m)$ is diffeomorphic to $T^2 \times \IR$ unless $m=1$ and $T_1$ is non-separating.
Moreover, the following is true for a minimal geometric decomposition:
Choose Seifert fibrations on each Seifert component of the decomposition, and consider a torus $T_i$ that is adjacent to a Seifert component on both sides.
Then the Seifert fibrations on either side of $T_i$ extend to two non-isotopic fibrations on $T_i$.

Lastly, we mention that a minimal geometric decomposition is unique up to isotopy (see \cite[Theorem 1.9]{Hat}).
So it is reasonable to speak of \emph{the} (minimal) geometric decomposition of a manifold.

The statement of the Geometrization Conjecture is now the following

\begin{Theorem}[Geometrization Conjecture]
Every closed, orientable, irreducible manifold admits a minimal geometric decomposition.
\end{Theorem}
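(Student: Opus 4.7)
The plan is to combine the classical JSJ decomposition of Jaco--Shalen and Johannson with Perelman's Ricci flow with surgery, whose long-time behavior is the subject of the companion paper \cite{Bamler-LT-Perelman}. I would proceed in three steps.

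First I would invoke the JSJ splitting theorem (cf.\ \cite[Theorem 1.9]{Hat}) to produce a minimal, isotopy-unique collection of pairwise disjoint, pairwise non-parallel incompressible tori $T_1, \ldots, T_m \subset M$ such that each component of $M \setminus (T_1 \cup \ldots \cup T_m)$ is either Seifert fibered or atoroidal and non-Seifert. For Seifert components condition (ii) of Definition \ref{Def:geomdec} is immediate by inspection, so the task reduces to (a) showing that every atoroidal non-Seifert piece admits a complete finite-volume hyperbolic structure, and (b) handling the alternative case where $M$ has finite fundamental group, which by the elliptization theorem produces exactly the spherical space form alternative in the statement.

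For (a) I would run Ricci flow with surgery on $M$ from an arbitrary initial metric and apply the thick--thin decomposition \cite[Proposition \ref{Prop:thickthindec}]{Bamler-LT-Perelman}. At large rescaled time, $M$ splits into a thick part whose components converge in the pointed Cheeger--Gromov sense to complete, finite-volume hyperbolic manifolds $H_1, \ldots, H_k$, and a thin part which a collapsing argument in the spirit of Shioya--Yamaguchi identifies as a graph manifold. The cuspidal cross-sections of the $H_i$ embed into $M$ as tori separating the thick and thin regions; after an isotopy identifying them with (a subfamily of) the JSJ tori and extending the Seifert fibrations of the thin side regularly across the interface, one obtains the desired minimal geometric decomposition.

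The main obstacle, and the heart of Perelman's contribution, is proving that the cuspidal tori coming from the thick part are incompressible in $M$. Without this, condition (i) of Definition \ref{Def:geomdec} fails and the resulting decomposition is not geometric at all, so the theorem collapses. One argues via the monotonicity of a Hamilton-style functional controlling the asymptotic shape of the hyperbolic cusps, combined with a minimal-surface or stability argument that rules out a non-contractible loop on such a torus bounding an embedded disk through the thin side. Everything else---Seifert verification, matching of tori, and reduction to a \emph{minimal} collection---is comparatively routine bookkeeping once the incompressibility is in hand.
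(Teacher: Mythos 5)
The paper does not prove this theorem; it states the Geometrization Conjecture as a known external result (Perelman's resolution of Thurston's conjecture, combined with hyperbolization, elliptization, and the collapsing theory of Shioya--Yamaguchi, Morgan--Tian, Kleiner--Lott, Besson et al.) and then uses it as input to the elementary topological lemmas that follow. So there is no in-paper proof to compare against.

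Your sketch is a reasonable high-level summary of Perelman's program, but a few points are worth flagging. First, the logical order is somewhat reversed: the standard argument does not start from a JSJ splitting and then verify hyperbolicity of atoroidal pieces. Rather, Ricci flow with surgery produces its own decomposition---finite-time extinction handles the finite fundamental group (spherical space form) case directly, with no separate elliptization input, and at large times the thick--thin decomposition yields hyperbolic pieces and a collapsed thin part, which deep collapsing theory identifies as a graph manifold. The cuspidal tori of the thick pieces are shown to be incompressible (this is one of Perelman's hardest steps, using the monotonicity of the normalized hyperbolic volume and a stable minimal surface argument). Only a posteriori does one match the resulting family of incompressible tori with the JSJ collection and pass to a minimal subcollection. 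Second, ``for Seifert components condition (ii) is immediate by inspection'' undersells a real step: the thin part must be recognized as a graph manifold, and the Seifert fibrations must be chosen compatibly across the boundary tori; this is where the Shioya--Yamaguchi style collapsing argument you mention does the actual work and is far from routine. Finally, as written, your sketch silently invokes the Poincar\'e conjecture (finite-time extinction) and the positive resolution of several other substantial theorems; for the purposes of this paper these are simply taken as given, which is why the theorem is stated without proof.
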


Next, we will show that $3$-manifolds that are not diffeomorphic to spherical space forms or $S^2 \times S^1$ have a sufficiently complex topology and hence cannot be covered by or decomposed into certain elementary pieces.

\begin{Lemma} \label{Lem:compressingtorus}
Let $M$ be a closed, irreducible manifold and let $T \subset M$ be an embedded, $2$-sided, compressible torus.
Then $T$ separates $M$ into two components $U$, $V$ (i.e. $M = U \cup V$ and $U \cap V = T$) and we can distinguish the following cases:
\begin{enumerate}[label=(\alph*)]
\item Neither of the components $U$ or $V$ is diffeomorphic to a solid torus $S^1 \times D^2$.
Then the compressing disks $D$ for $T$ either all lie in $U$ or in $V$ and for each such $D$ a tubular neighborhood of $D \cup V$ or $D \cup U$ (depending on whether $D \subset U$ or $D \subset V$) is diffeomorphic to a $3$-ball.
\item Only one of the components $U$, $V$ is diffeomorphic to a solid torus.
Assume that this component is $U$.
Then $T$ has compressing disks in $U$.
If it also has compressing disks in $V$, then $U$ is contained in an embedded $3$-ball in $M$ and $U$ is compressible in $M$ (i.e. the map $\IZ \cong \pi_1 (U) \to \pi_1 (M)$ is not injective).
\item Both $U$ and $V$ are diffeomorphic to solid tori.
Then $M$ is diffeomorphic to a spherical space-form.
\end{enumerate}
\end{Lemma}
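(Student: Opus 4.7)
The plan is to derive the trichotomy from a single compression dichotomy arising from irreducibility. The preliminary step is to show that $T$ separates $M$. Since $T$ is compressible and $2$-sided, an embedded compressing disk $D$ exists by Proposition \ref{Prop:incompressibleequiv}; its boundary is a primitive, hence non-separating, simple closed curve on $T$. Compressing $T$ along $D$ produces a $2$-sphere $T'$ that is $\IZ/2$-homologous to $T$ in $M$ (their difference is the boundary of a thickened disk). By Proposition \ref{Prop:pi2irred}, $M$ has $\pi_2 = 0$, so $T'$ bounds a $3$-ball and $[T] = 0$ in $H_2(M; \IZ/2)$; as $T$ is $2$-sided, it therefore separates $M$ into two components $U$ and $V$.

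The key dichotomy, which I would extract next, reads as follows: for any compressing disk $D \subset U$, the sphere $\partial(U \setminus N(D)) = T'$ bounds a $3$-ball in $M$, and this ball is either $U \setminus N(D)$ itself --- in which case $U$, obtained by reattaching the $1$-handle $N(D)$, is a solid torus --- or its complement $V \cup N(D)$, a regular neighborhood of $D \cup V$. Case (c) is immediate: $M$ carries a genus-one Heegaard splitting, yielding $S^3$, $S^2 \times S^1$, or a lens space, and irreducibility excludes $S^2 \times S^1$, leaving a spherical space form. For case (b), the meridian disk of the solid torus $U$ already supplies a compressing disk in $U$; if a disk $D_V \subset V$ also exists, applying the dichotomy to $V$ (which is not a solid torus) forces $U \cup N(D_V)$ to be a $3$-ball, yielding both an embedded $3$-ball containing $U$ and the compressibility of $U$ in $M$. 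In case (a), the second assertion follows in the same spirit: for any compressing disk $D \subset U$ the dichotomy must produce the alternative, so $V \cup N(D)$ is a $3$-ball, and analogously for disks on the $V$-side.

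The delicate point is the first assertion of case (a), that compressing disks cannot exist on both sides. Assuming $D \subset U$ and $D' \subset V$, the dichotomy yields two $3$-balls $B_1 = V \cup N(D)$ and $B_2 = U \cup N(D')$ that together cover $M$. The inclusions of $U$ and $V$ into $M$ factor through these contractible balls, so van Kampen applied to the splitting $M = U \cup V$ forces $\pi_1(M) = 1$; the Poincar\'e Conjecture then gives $M \approx S^3$. In $S^3$ the Schoenflies theorem says the complement of any embedded $3$-ball is again a $3$-ball, so $U \setminus N(D)$ --- the other side of the sphere $\partial B_1$ --- is a $3$-ball, and reattaching $N(D)$ exhibits $U$ as a solid torus, contradicting the hypothesis of case (a). The main obstacle is precisely this step, which invokes the Poincar\'e Conjecture (and, implicitly, the classification of embedded tori in $S^3$); the remainder of the proof is elementary manipulation of the irreducibility dichotomy.
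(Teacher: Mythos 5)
Your proof is correct and follows the same overall architecture as the paper's --- separation, the irreducibility dichotomy for the compressed sphere $T'$, then the three cases --- but it fills in details the paper outsources to Hatcher and differs in two localized spots. First, the paper cites \cite[p.~11]{Hat} both for the separation of $T$ and for the dichotomy ``$U$ is a solid torus or a neighborhood of $D \cup V$ is a ball''; you reprove both, the separation via a mod-$2$ homology argument and the dichotomy directly from irreducibility of $M$ applied to the compressed sphere $T'$. This is perfectly sound and more self-contained. Second, in case (a) the paper concludes $M \approx S^3$ from the two covering balls via Lemma~\ref{Lem:coverMbysth}(a), which rests only on Alexander's theorem, and then contradicts the standard fact that an embedded torus in $S^3$ bounds a solid torus on at least one side; you instead go through van~Kampen plus the Poincar\'e Conjecture to get $M \approx S^3$ and then use the Schoenflies/Alexander sphere theorem to force $U$ to be a solid torus --- logically equivalent, but the paper's route avoids invoking Poincar\'e at this step (Poincar\'e is already used elsewhere in the paper, e.g.\ in Proposition~\ref{Prop:pi2irred}, so this is not an added hypothesis, just an unnecessary appeal). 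Third, in case (c) you invoke the classification of genus-one Heegaard splittings and rule out $S^2 \times S^1$ by irreducibility, whereas the paper argues more explicitly via the kernels of $\pi_1(T) \to \pi_1(U)$ and $\pi_1(T) \to \pi_1(V)$ to pin down the lens space; again both are standard and equivalent. One very minor slip: after compressing $T$ to get the sphere $T'$, the fact that $T'$ bounds a ball is the definition of irreducibility (Definition~\ref{Def:irreducible}) and does not need to be routed through $\pi_2(M) = 0$ and Proposition~\ref{Prop:pi2irred}; the detour is harmless but inefficient.
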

\begin{proof}
For the first part see \cite[p. 11]{Hat}.
Let $D$ be a compressing disk for $T$ and assume that $D \subset U$.
Again by \cite[p. 11]{Hat}, we know that either $U$ is a solid torus or a tubular neighborhood  of $D \cup V$ is diffeomorphic to a $3$-ball.
So if in case (a) there are compressing disks for $T$ in both $U$ and $V$, then $M$ is covered by two embedded $3$-balls and we have $M \approx S^3$ by Lemma \ref{Lem:coverMbysth}(a) (observe that the proof of Lemma \ref{Lem:coverMbysth}(a) does not make use of this Lemma).
However, this contradicts the fact that an embedded $2$-torus in $S^3$ bounds a solid torus on at least one side (see \cite[p. 11]{Hat}).
Case (b) follows similarly.

Consider now case (c).
Let $K_1, K_2 \subset \pi_1(T) \cong \IZ^2$ be the kernels of the projections $\pi_1(T) \to \pi_1(U)$ and $\pi_1(T) \to \pi_1(V)$.
If $K_1 = K_2$, then $M \approx S^1 \times S^2$ contradicting the assumptions on $M$.
So $K_1 \not= K_2$.
Let $a_i \in K_i$ be generators.
By an appropriate choice of coordinates, we can assume that $a_1 = (1,0) \in \IZ^2$ and $a_2 = (p,q) \in \IZ^2$ where $0 \leq p < q$.
Then $M$ is diffeomorphic to the lens space $L(p,q)$.
\end{proof}

\begin{Lemma} \label{Lem:coverMbysth}
Let $M$ be a closed manifold and assume that $M = U \cup V$.
Then
\begin{enumerate}[label=(\alph*)]
\item If $U$ and $V$ are diffeomorphic to a ball, then $M \approx S^3$.
\item If $U$ is diffeomorphic to a solid torus $S^1 \times D^2$ and $V$ is diffeomorphic to a ball, then $M \approx S^3$.
\item If $U$ and $V$ are diffeomorphic to a solid torus $\approx S^1 \times D^2$, then $M$ is either not irreducible or it is diffeomorphic to a spherical space form.
\end{enumerate}
\end{Lemma}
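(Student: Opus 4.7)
The plan is to handle all three parts by a uniform strategy: after a small perturbation I may assume that $U, V$ are compact codimension-$0$ smooth submanifolds of $M$ with $\partial U \pitchfork \partial V$, so that $\Gamma := \partial U \cap \partial V$ is a finite disjoint union of smoothly embedded circles. A preliminary observation is that once $\Gamma = \emptyset$ one automatically has $\partial U \subset \Int V$ and $\partial V \subset \Int U$: for otherwise a component of $\partial U$ disjoint from $V$, combined with $M = U \cup V$ and the connectedness of $U$, $V$ and $M$, would force $V \subset U$ or $U \subset V$, which is impossible since neither $D^3$ nor $S^1 \times D^2$ is a closed manifold.

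For part (a), I would reduce $\Gamma$ by the standard innermost-disk surgery: pick a circle $C \subset \Gamma$ bounding an innermost disk $D \subset \partial U$, so $\Int D \cap \partial V = \emptyset$. Since $V$ is a ball, $C \subset \partial V$ also bounds a disk $D_1 \subset \partial V$, which after a small isotopy across a collar of $D$ can be replaced by a parallel copy of $D$; this strictly reduces the number of circles in $\Gamma$. Iterating yields $\Gamma = \emptyset$, so $\partial U \subset \Int V$ is a smoothly embedded $S^2$ in a ball. By the smooth Schoenflies theorem in $\IR^3$, $\partial U$ bounds a ball $B \subset V$. The compact submanifold $M \setminus \Int U \subset V$ has total boundary exactly $\partial U$, so it cannot be the shell $V \setminus \Int B$ (which would have the extra boundary component $\partial V$); hence $M \setminus \Int U = B$, and $M = U \cup_{\partial U} B$ is the union of two $3$-balls along an $S^2$, i.e.\ $M \approx S^3$.

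Part (b) follows along the same lines. After the innermost-disk reduction, $\partial V \subset \Int U$ is a $2$-sphere in the solid torus $U$. By Proposition \ref{Prop:pi2irred}, $U \approx S^1 \times D^2$ is irreducible (since $\pi_2(S^1 \times D^2) = 0$), so $\partial V$ bounds a ball $B \subset U$; the same boundary-component argument as in (a) gives $M \setminus \Int V = B$, reducing to (a) and yielding $M \approx S^3$.

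The main obstacle is part (c), where both $\partial U$ and $\partial V$ are tori. The innermost-disk step becomes delicate: innermost circles on the torus $\partial U$ are automatically contractible in $\partial U$ and hence meridians of $U$, but they need not bound embedded disks in the opposite solid torus $V$. A careful case analysis based on the uniqueness up to isotopy of the meridian disk of a solid torus either simplifies $\Gamma$ further or directly reads off the diffeomorphism type of $M$. Once $\Gamma = \emptyset$, $M = U \cup V$ exhibits a genus-$1$ Heegaard splitting; the standard classification (obtained, as in the final paragraph of the proof of Lemma \ref{Lem:compressingtorus}, by comparing the kernels $K_U, K_V \subset \pi_1(\partial U) \cong \IZ^2$ of the two projections to $\pi_1(U)$ and $\pi_1(V)$) yields $M \in \{S^3,\, S^1 \times S^2,\, L(p,q)\}$. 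The first and third are spherical space forms; $\pi_2(S^1 \times S^2) \neq 0$ shows via Proposition \ref{Prop:pi2irred} that $S^1 \times S^2$ is not irreducible, completing the trichotomy.
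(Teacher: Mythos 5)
Your approach to parts (a) and (b) is a valid alternative to the paper's, but the innermost-disk reduction is an unnecessary detour.  Since $M=U\cup V$ and $V$ is closed, every point of $\partial U$ is a limit of points just outside $U$, and those lie in $V$; hence $\partial U\subset V$ automatically, with no surgery.  Replacing $V$ by a slightly enlarged copy $V'$ with $V\subset\Int V'$ (which preserves $M=U\cup V'$ and the diffeomorphism type) then gives $\partial U\subset\Int V'$, so $\partial U\cap\partial V'=\emptyset$ at once.  This is what the paper means by ``adding collar neighborhoods,'' and it replaces your entire reduction of $\Gamma$.  The Schoenflies/Alexander endgame you use afterwards agrees with the paper's.

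Part (c), however, has a genuine gap, and it is exactly the part you flag as ``the main obstacle.''  Two separate problems.  First, a factual slip: an innermost circle $C\subset\Gamma$ bounds a disk in $\partial U$, so $C$ is \emph{nullhomotopic} on $\partial U$; it is therefore not a meridian of $U$ (meridians are essential curves on $\partial U$ that bound disks in $U$ but not in $\partial U$).  Second, and more seriously, achieving $\Gamma=\emptyset$ does \emph{not} produce a genus-$1$ Heegaard splitting.  After $\Gamma=\emptyset$ one has $\partial U\subset\Int V$ and $\partial V\subset\Int U$, and the complementary piece $M\setminus\Int U$ is a compact submanifold of the solid torus $V$ with torus boundary $\partial U$ --- but there is no reason for it to be a solid torus (e.g.\ it could be the exterior of a knotted core inside $V$).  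So the cited classification of genus-$1$ splittings does not apply directly.  This is precisely what the paper's argument supplies: $\partial U\subset\Int V$ is a torus in a solid torus, hence compressible by Proposition \ref{Prop:incompressibleequiv}; if the compressing disk lies in $U$ one cuts $U$ open along it and reduces to part (b), while if it lies in $M\setminus\Int U$ one applies Lemma \ref{Lem:compressingtorus}(b) to conclude that either $M\setminus\Int U$ is a solid torus (handled by Lemma \ref{Lem:compressingtorus}(c)) or $U$ sits in an embedded $3$-ball (again reducing to part (b)).  Your ``careful case analysis'' needs to be replaced by this dichotomy --- without it the proof of (c) is incomplete.
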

\begin{proof}
In case (a), we can assume that $U$ and $V$ are the interiors of compact embedded $3$-disks.
So $\partial U \subset V$.
By Alexander's Theorem (cf \cite[Theorem 1.1]{Hat}), $\partial U$ bounds a $3$-disk in $V$.
So $\partial U$ bounds a $3$-disk on both sides and hence $M \approx S^3$.

Case (b) follows along the lines; note that every embedded sphere in a solid torus bounds a ball.

For case (c) we can assume that $M$ is irreducible.
Moreover, by adding collar neighborhoods, we can assume that $\partial U \cap \partial V = \emptyset$.
Let $T = \partial U$ and $V' = M \setminus \Int U$.
Then $T$ is compressible in $V$ and by Proposition \ref{Prop:incompressibleequiv}, we find a spanning disk $D \subset \Int V$.
If also $D \subset U$, then $U \setminus D$ is a $3$-ball and $M = ( U \setminus D ) \cup V$ and we are done by case (b).
So assume that $D \subset V'$.
Then by Lemma \ref{Lem:compressingtorus}(b), either $V'$ is a solid torus or $U$ is contained in an embedded $3$-ball $B \subset M$.
In the latter case $M = B \cup V$ and we are again done by case (b).
Finally, if $V'$ is a solid torus, we are done by Lemma \ref{Lem:compressingtorus}(c).
\end{proof}

\begin{Lemma} \label{Lem:Kleinandsolidtorus}
Let $M$ be a manifold and $T \subset M$  an embedded $2$-torus that separates $M$ into two connected components whose closures $U, V \subset M$ are diffeomorphic to $\Klein^2 \td\times I$ and $S^1 \times D^2$ each.
Then $M$ is either not irreducible or it is diffeomorphic to a spherical space form.
\end{Lemma}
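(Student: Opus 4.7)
The plan is to pass to a suitable connected double cover $\widetilde M \to M$, recognize it as a $3$-manifold with a genus-$1$ Heegaard splitting, and then conclude by dichotomy.

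To construct the cover, observe that $U = \Klein^2 \widetilde\times I$ deformation retracts onto its zero-section Klein bottle, so the orientation character of $\Klein^2$ gives a surjection $\phi_U : \pi_1(U) \to \IZ/2$. The inclusion $T \hookrightarrow U$ followed by the retraction is precisely the orientation double cover $T \to \Klein^2$, so $\pi_1(T) \hookrightarrow \pi_1(U)$ is injective with image equal to $\ker \phi_U$. By van Kampen, $\pi_1(M) = \pi_1(U) *_{\pi_1(T)} \pi_1(V)$. I would extend $\phi_U$ to a surjection $\phi_M : \pi_1(M) \to \IZ/2$ by declaring it trivial on $\pi_1(V)$; since both restrictions to $\pi_1(T)$ vanish, this is well-defined. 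Let $p : \widetilde M \to M$ denote the resulting connected double cover.

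Next I would identify $\widetilde M$ piece by piece. Because $\phi_M|_{\pi_1(U)}$ is nontrivial, $p^{-1}(U)$ is the connected double cover of $U$ corresponding to $\ker \phi_U$; this is an orientable $3$-manifold with two torus boundary components and $\pi_1 = \IZ^2$, hence diffeomorphic to $T^2 \times I$. Because $\phi_M|_{\pi_1(V)} = 0$, the preimage $p^{-1}(V)$ is trivial, i.e.\ a disjoint union $V_0 \sqcup V_1$ of two copies of the solid torus $V$. Thus $\widetilde M$ is $T^2 \times I$ with a solid torus Dehn-filled on each end. Absorbing the collar $T^2 \times I$ into one of the solid tori, $\widetilde M$ becomes the union of two solid tori glued along their common boundary torus, i.e.\ has a genus-$1$ Heegaard splitting. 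By the standard classification, $\widetilde M$ is diffeomorphic to $S^3$, to $S^1 \times S^2$, or to some lens space $L(p,q)$.

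The final dichotomy: if $\widetilde M \approx S^1 \times S^2$, then $\pi_2(M) = \pi_2(\widetilde M) = \IZ \neq 0$, so Proposition~\ref{Prop:pi2irred} forces $M$ to fail to be irreducible. Otherwise $\pi_1(\widetilde M)$ is finite cyclic, hence $\pi_1(M)$ is finite, and Perelman's resolution of the elliptization conjecture (used elsewhere in this series, e.g.\ in the proof of Lemma~\ref{Lem:compressingtorus}) yields that $M$ is diffeomorphic to a spherical space form. The main obstacle is the double-cover setup: one must identify $T$ as the orientation double cover of the zero-section Klein bottle in $U$ so that $\pi_1(T)$ sits in $\ker \phi_U$, and verify the compatibility of the two restrictions to $\pi_1(T)$ so that $\phi_U$ extends across the van Kampen amalgam. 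Once the cover is in hand, the Heegaard-splitting identification and the final dichotomy are routine.
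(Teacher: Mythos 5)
Your proof is correct and follows essentially the same route as the paper: pass to the connected double cover that untwists $\Klein^2 \td\times I$ into $T^2 \times I$, observe that the resulting manifold is a union of two solid tori glued along their common boundary (a genus-$1$ Heegaard splitting), and then conclude via the classification of such splittings. The only cosmetic difference is in the final step — the paper identifies the possibilities for $M$ directly (citing Asano for the lens-space case), while you use $\pi_2(M) \cong \pi_2(\widetilde M)$ together with Proposition~\ref{Prop:pi2irred} to handle $S^1 \times S^2$ and Perelman's elliptization for the finite-$\pi_1$ case, which is equally valid.
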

\begin{proof}
Consider the double cover $\widehat{U} \to U$ for which $\widehat{U} \approx T^2 \times I$.
This cover extends to a double cover $\widehat{M} \to M$.
Let $T' \subset \widehat{M}$ be the torus that projects to the zero section in $\Klein^2 \td\times I$.
Then as in the last part of the proof of Lemma \ref{Lem:compressingtorus}(c) we can write $\widehat{M} = S_1 \#_{T'} S_2$ where $S_1$ and $S_2$ are solid tori.
So $\widehat{M}$ is either diffeomorphic to $S^1 \times S^2$ or a lens space.
In the first case, $M$ is either diffeomorphic to $S^1 \times S^2$ or $\IR P^3 \# \IR P^3$ and in the second case, $M$ is still spherical (see also \cite{Asa}).
\end{proof}

The following Lemma will be important in the proof of \cite[Lemma \ref{Lem:SStori}]{Bamler-LT-main}.

\begin{Lemma} \label{Lem:Seifertfiberincompressible}
Let $M$ be compact, orientable, irreducible manifold (possibly with boundary) that is not diffeomorphic to a spherical space form.
Consider a compact, connected $3$-dimensional submanifold $N \subset M$ whose boundary components are tori and that carries a Seifert fibration.
Assume that each boundary component $T \subset \partial N$ that is compressible in $M$, either bounds a solid torus $\approx S^1 \times D^2$ on the other side or $T$ separates $M$ into two components and is incompressible in the component of $M \setminus T$ that does not contain $N$ (if $T \subset \partial M$, then this component is empty).

Then there are two cases: 
In the first case there is one boundary torus $T \subset \partial N$ that bounds a solid torus on the same side as $N$.
In the second case every boundary component of $N$ either bounds a solid torus on the side opposite to $N$ or it is even incompressible in $M$.
Moreover, in the second case, the generic Seifert fibers of $N$ are incompressible in $M$.
\end{Lemma}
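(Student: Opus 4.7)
The plan is to first establish the dichotomy between cases 1 and 2, and then, assuming case 2, to prove that the generic Seifert fiber $\phi$ of $N$ is incompressible in $M$.

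For the dichotomy, I would assume case 1 fails (no boundary torus of $N$ bounds a solid torus on the $N$-side) and show that each compressible $T \subset \partial N$ bounds a solid torus on the side opposite $N$. The hypothesis supplies two alternatives for such a $T$: either it already bounds a solid torus opposite $N$, or $T$ separates $M$ as $U \cup_T V$ with $N \subset U$ and $T$ incompressible in $V$. In the latter subcase all compressing disks of $T$ in $M$ lie in $U$, and I apply Lemma \ref{Lem:compressingtorus}: case (c) would make $M$ a spherical space form; case (b) with $V$ a solid torus is excluded because then $T = \partial V$ would be compressible in $V$; and case (a) would force, via the cited 3-ball tubular neighborhood of $D \cup V$ together with the classical fact that a compact 3-submanifold of a 3-ball with torus boundary is a solid torus, that $V$ itself is a solid torus, contradicting case (a). Thus we are in case (b) with $U$ a solid torus, which is exactly case 1 --- contradicting our assumption. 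Hence only the first alternative of the hypothesis occurs, and the first clause of case 2 holds.

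For the fiber statement, I argue by contradiction: assume case 2's first clause and that $\phi$ bounds a disk in $M$. Since case 1 fails, $N$ is not a solid torus (else its one boundary torus would bound $N$ on the $N$-side), so $\partial N$ is incompressible in $N$ by the standard fact that a Seifert-fibered manifold with torus boundary and compressible boundary must be a solid torus. Partition $\partial N$ into $\TT_1$ (tori bounding solid tori $S_T$ opposite $N$) and $\TT_2$ (tori incompressible in $M$), and form $\td{N} := N \cup \bigcup_{T \in \TT_1} S_T$, whose boundary lies in $\TT_2 \cup \partial M$ and is incompressible in $M$. On each $T \in \TT_1$, compare the Seifert fiber slope $\phi_T$ of $N$ on $T$ with the meridian slope $\mu_T$ of $S_T$. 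If $\mu_T \neq \phi_T$ for every $T \in \TT_1$, the Seifert fibration of $N$ extends uniquely across each $S_T$ (the core becoming a possibly exceptional fiber), yielding a Seifert fibration of $\td{N}$ with the same generic fiber $\phi$. Incompressibility of $\partial \td{N}$ in the irreducible $M$ then yields $\pi_1(\td{N}) \hookrightarrow \pi_1(M)$ by a standard 3-manifold fact; since $\td{N}$ is Seifert-fibered with incompressible boundary it is not a solid torus, so $\phi$ is non-trivial in $\pi_1(\td{N})$ and hence in $\pi_1(M)$, a contradiction.

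The main obstacle is the remaining case, where $\mu_T = \phi_T$ for some $T \in \TT_1$. Here $\phi$ is killed by the meridian disk of $S_T$, and $\pi_1(N \cup S_T) = \pi_1(N)/\langle \phi \rangle \cong \pi_1^{\textup{orb}}(O)$, with $O$ the base orbifold of $N$. I would derive a contradiction by analyzing the possible $O$ (using that $N$ is not a solid torus): in each case one should show that either $N \cup S_T$ is itself a solid torus, so that one of its boundary tori, lying in $\partial N \setminus \{T\}$, bounds a solid torus on the $N$-side --- realising case 1 and contradicting our assumption --- or $\pi_1^{\textup{orb}}(O)$ splits as a non-trivial free product, whence Kneser--Milnor together with the irreducibility of $M$ and the hypothesis that $M$ is not a spherical space form yields a contradiction (via the induced structure on a suitably enlarged submanifold of $M$ whose essential spheres must bound balls in $M$).
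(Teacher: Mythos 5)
Your overall plan is reasonable, and the idea of extending the Seifert fibration over the solid tori with non-meridian fiber slope is exactly the reduction with which the paper's proof begins. However, there are two substantive problems.

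First, the ``classical fact'' you invoke to close case (a) of Lemma~\ref{Lem:compressingtorus} --- that a compact $3$-submanifold $V$ of a $3$-ball with $\partial V \cong T^2$ must be a solid torus --- is false. The exterior $X_K$ of a non-trivial knot $K \subset S^3$ is a counterexample: it is compact with torus boundary, is not a solid torus, and embeds in a ball (pick $p$ in the open solid torus $S^3 \setminus X_K$; then $X_K \subset S^3 \setminus \{p\} \cong \IR^3$ is bounded). One can even realize the precise configuration of Lemma~\ref{Lem:compressingtorus}(a): attaching a $2$-handle to $X_K$ along a meridian of $T = \partial X_K$ yields $S^3$ minus a ball, so $\nu(D \cup V)$ is a $3$-ball while $V = X_K$ is a knot exterior with \emph{incompressible} boundary. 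Thus case (a) does not force $V$ to be a solid torus, and your dichotomy argument collapses. It is no accident that the paper establishes incompressibility of the $T_i$ \emph{last}, as a corollary of the incompressibility of the generic fiber (a fiber in a $T_i$ contained in a $3$-ball would be null-homotopic); trying to classify the $T_i$ before proving anything about the fiber seems to be the wrong order, and I don't see how to repair your step (a) without that input.

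Second, the case $\mu_T = \phi_T$ is the real content of the lemma --- the paper's opening reduction reduces precisely to the situation where every $T_i$ bounding a solid torus opposite $N$ has meridian fiber --- and your treatment of it is only a vague outline (``analyze the possible $O$ \ldots Kneser--Milnor \ldots free product \ldots''). The paper takes a concrete and quite different route: it constructs spheres $\Sigma_\alpha = D_1 \cup Z_\alpha \cup D_2$ from vertical annuli $Z_\alpha = \pi^{-1}(\alpha)$ over arcs $\alpha$ in the base, capped by meridian disks, and uses irreducibility to show such spheres must be separating. This forces at most one such torus and a planar base, after which cutting the base along arcs and applying Lemma~\ref{Lem:coverMbysth}(b),(c) gives the contradiction. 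You would need to supply a genuine argument here; as written, the sketch does not resemble a proof, and the free-product plan is not obviously viable since the fundamental group of the base orbifold of a planar base with cone points is typically not a non-trivial free product.

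Minor points: your dichotomy step silently ignores the possibility $T \subset \partial M$ (where the opposite ``component'' is empty); and when you form $\td N$ and conclude $\partial \td N$ is incompressible in $M$, you should verify that any components of $\partial \td N$ lying in $\partial M$ are incompressible (a compressible torus boundary component of the irreducible $M$ would make $M$ a solid torus, hence case 1, so this is fine --- but it should be said).
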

\begin{proof}
Some of the following arguments can also be found in \cite{Faessler} and \cite{MorganTian}.
Denote the boundary tori of $N$ by  $T_1, \ldots, T_m$.
Assume that there is a component $T_i$ that bounds a solid torus $S_i$ on the side opposite to $N$ such that the Seifert fibers in $T_i$ are incompressible in $S_i$.
Then we can extend the Seifert fibration of $N$ to $S_i$.
So assume in the following that for any $T_i$ that bounds a solid torus $S_i$ on the other side, the Seifert fibers of $T_i$ are nullhomotopic in $S_i$.
Denote by $O$ the base orbifold of the Seifert fibration on $N$ and call the projection $\pi : N \to O$.
We remark that since $M$ is orientable, the only singular points of $O$ are cone points.
Each $T_i$ corresponds to a boundary circle $C_i = \pi(T_i) \subset \partial O$.

We first show that that there is at most one $T_i$ that bounds a solid torus $S_i$ on the side opposite to $N$ (we will call it from now on $T_1$):
Assume, there were two such components $T_1$ and $T_2$ and denote the respective solid tori by $S_1$ and $S_2$.
Let $\alpha \subset O$ be an embedded arc connecting $C_1$ and $C_2$ that does not meet any singular points.
The preimage $Z_{\alpha} = \pi^{-1}(\alpha) \subset N$ is an annulus whose boundary components are each nullhomotopic in $S_1$ or $S_2$, respectively.
Let $D_1 \subset S_1$ and $D_2 \subset S_2$ be compressing disks for $Z_{\alpha} \cap \partial S_1$ and $Z_{\alpha} \cap \partial S_2$, respectively.
Then $\Sigma_\alpha = D_1 \cup Z_\alpha \cup D_2$ is an embedded $2$-sphere.
Since $D_1$ and $D_2$ are non-separating in $S_1$ and $S_2$, respectively, we conclude that $\Sigma_\alpha$ is non-separating in $M$.
This contradicts the assumption that $M$ is irreducible.

Next, we show that if $T_1$ bounds a solid torus $S_1$ on the side opposite to $N$, then the topological surface underlying $O$ is a planar domain:
Assume not.
Then there is an embedded, non-separating arc $\alpha \subset O$ whose endpoints are distinct and lie in $C_1$.
As before, this arc yields a non-separating sphere $\Sigma_\alpha \subset M$ contradiction the irreducibility assumption of $M$.

Assume now for the rest of the proof that none of the tori $T_i$ bound a solid torus on the same side as $N$.
We will show in the following that then none of the tori $T_i$ bounds a solid torus on either side and that all $T_i$ as well as that the generic Seifert fibers on $N$ are incompressible in $M$.

First assume that $T_1$ bounds a solid torus $S_1$ (on the side opposite to $N$).
So the topological surface underlying $O$ is a planar domain.
We can find a collection of pairwise disjoint, embedded arcs $\alpha_1, \ldots, \alpha_k \subset O$ with endpoints in $C_1$ that do not meet any singular points and that cut $O$ into smaller pieces, each of which contain at most one singular point or one boundary component, and that are bounded by at most two of the arcs $\alpha_i$ and parts of $C_1$.
The corresponding spheres $\Sigma_{\alpha_1}, \ldots, \Sigma_{\alpha_k} \subset M$ bound closed $3$-balls $B_1, \ldots, B_k \subset M$.
Any two such balls are either disjoint or one is contained in the other.
Hence, either there is one $B_i$ containing all other balls or there are two balls $B_i$, $B_j$ such that any ball is contained in one of them.
In the first case set $U = S_1 \cup B_i$ and in the second case set $U = S_1 \cup B_i \cup B_j$.
From the position of the balls relatively to $S_1$ we conclude that $U$ is diffeomorphic to a solid torus.
Moreover, we can find a component $P \subset O \setminus (\alpha_1 \cup \ldots \cup \alpha_k )$ whose boundary contains the arc $\alpha_i$ or the arcs $\alpha_i$ and $\alpha_j$ (depending on whether $U = S_1 \cup B_i$ or  $U = S_1 \cup B_i \cup B_j$) such that $N =  (B_i \cap N) \cup \pi^{-1} (P)$ or $N= ((B_i \cup B_j) \cap N) \cup \pi^{-1} (P)$, respectively.
We can now distinguish the following cases:
\begin{enumerate}[label=$-$]
\item If $P$ contains an orbifold singularity, then $\pi^{-1} (P)$ is diffeomorphic to a solid torus and $M = U \cup \pi^{-1} (P)$ and we obtain a contradiction using Lemma \ref{Lem:coverMbysth}(c).
\item If $P$ contains a boundary component $C_l$ of $O$, then we argue as follows:
In this case $P$ is diffeomorphic to an annulus.
Let $\alpha' \subset P$ be an arc connecting $C_l$ with $C_1$ and choose a compressing disk $D' \subset S_1$ for the arc $Z_{\alpha'} \cap \partial S_1$.
Then $Z_{\alpha'} \cup D'$ is a compressing disk for $T_l$.
By our assumptions, $T_l$ does not bound a solid torus.
So by Lemma \ref{Lem:compressingtorus}(a), a tubular neighborhood of $T_l \cup Z_{\alpha'} \cup D'$ is diffeomorphic to a $3$-ball.
Since $P$ is diffeomorphic to an annulus, this tubular neighborhood can be extended to a tubular neighborhood whose boundary is contained in $U$.
This implies that $M$ is covered by a solid torus and a ball and Lemma \ref{Lem:coverMbysth}(b) gives us a contradiction.
\end{enumerate}
Hence, none of the $T_i$ bound a solid torus on either side.

We argue that the generic Seifert fibers of $N$ are incompressible in $N$:
Using Lemma \ref{Lem:coverMbysth}(c), we find that $O$ cannot be a bad orbifold (i.e. the tear drop or the football) or a quotient of the $2$-sphere.
So, we can find a (possibly non-compact) cover $\widehat O \to O$ such that $\widehat O$ is smooth and corresponding to this a cover $\widehat N \to N$ such that we have an $S^1$-fibration $\widehat N \to \widehat O$.
Observe that $\widehat O$ is not a $2$-sphere, because otherwise by Lemma \ref{Lem:coverMbysth}(c) $\widehat N \approx S^3$ in contradiction to our assumptions.
Using the long exact homotopy sequence and the fact that $\pi_2(\widehat O) = 0$, we conclude that a lift of any generic $S^1$-fiber $\gamma \subset N$ is incompressible in $\widehat N$ implying that $\gamma$ is incompressible in $N$.

Next we show that any generic $S^1$-fiber $\gamma$ of $N$ is incompressible in $M$:
Assume that there is a nullhomotopy $f : D^2 \to M$ for a non-zero multiple of $\gamma$.
By a small perturbation, we can assume that $f$ is transversal to the boundary tori $T_1, \ldots, T_m$.
So $f^{-1} ( T_1 \cup \ldots \cup T_m)$ consists of finitely many circles.
Look at one of those circles $\gamma' \subset D^2$ that is innermost in $D^2$ and assume $f(\gamma') \subset T_i$.
If $f|_{\gamma'}$ is homotopically trivial in $T_i$, then we can alter $f$ such that $\gamma'$ can be removed from the list.
So assume that $f|_{\gamma'}$ is homotopically non-trivial in $T_i$.
Let $D' \subset D^2$ be the disk that is bounded by $\gamma'$.
Then by Proposition \ref{Prop:incompressibleequiv} and Lemma \ref{Lem:compressingtorus}(b) we have $f(D') \subset N$.
Since the generic Seifert fibers of $N$ are incompressible in $N$, $f|_{\gamma'}$ cannot be homotopic to such a fiber, so it projects down to an arc that is homotopic to a non-zero multiple of the boundary circle $C_i$ under $\pi$.
Hence, a non-zero multiple of $C_i$ is homotopically trivial in $\pi_{1, \textnormal{orbifold}}(O)$.
We conclude that $O$ can only be a disk with possibly one orbifold singularity.
But this implies that $N$ is diffeomorphic to a solid torus, in contradiction to our assumptions.

It remains to show that all tori $T_i$ are incompressible in $M$.
By Lemma \ref{Lem:compressingtorus}(a), we conclude that if $T_i$ is compressible in $M$, then $T_i$ is contained in an embedded $3$-ball.
But this however contradicts the fact that the generic Seifert fibers of $N$ are incompressible in $M$.
\end{proof}

\section{Construction and analysis of simplicial complexes in $M$} \label{sec:constructanalysispolygonalcomplex}
\subsection{Setup and statement of the results} \label{subsec:CombinatorialSetup}
In this section, we construct a simplicial complex $V$ that will be used in \cite{Bamler-LT-main} in combination with the area evolution result from \cite{Bamler-LT-simpcx}.
We moreover analyze the intersections of images of $V$ with solid tori in $M$.
The results of this section are topological, however, we will need to make use of some combinatorial geometric arguments in the proofs.

We first recall the notion of simplicial complexes (compare also with \cite[Definition \ref{Def:simplcomplex}]{Bamler-LT-simpcx}).
\begin{Definition}[simplicial complex] \label{Def:simplcomplexrecall}
A \emph{($2$-dimensional) simplicial complex} $V$ is a topological space that is the union of embedded, closed $2$-simplices (triangles), $1$-simplices (intervals) and $0$-simplices (points) such that any two distinct simplices are either disjoint or their intersection is equal to another simplex whose dimension is strictly smaller than the maximal dimension of both simplices.
$V$ is called \emph{finite} if the number of these simplices is finite.

In this paper, we assume $V$ moreover to be \emph{locally finite} and \emph{pure}.
The first property demands that every simplex of $V$ is contained in only finitely many other simplices and the second property states that every $0$ or $1$-dimensional simplex is contained in a $2$-simplex.
We will also assume that all $2$ and $1$-simplices are equipped with differentiable parameterizations that are compatible with respect to restriction.

We will often refer to the $2$-simplices of $V$ as \emph{faces}, the $1$-simplices as \emph{edges} and the $0$-simplices as \emph{vertices}.
The \emph{$1$-skeleton} $V^{(1)}$ is the union of all edges and the \emph{$0$-skeleton} $V^{(0)}$ is the union of all vertices of $V$.
The \emph{valency} of an edge $E \subset V^{(1)}$ denotes the number of adjacent faces, i.e. the number of $2$-simplices that contain $E$.
The \emph{boundary} $\partial V$ is the union of all edges of valency $1$.
\end{Definition}

Next let $M$ be a closed, orientable, irreducible $3$-manifold that is not a spherical space form.
Consider a (not necessarily minimal) geometric decomposition of $T_1, \ldots, T_m \subset M$ of $M$, i.e. the components of $M \setminus (T_1 \cup \ldots \cup T_m)$ are either hyperbolic or Seifert (see Definition \ref{Def:geomdec} for more details).
We will assume from now on that the decomposition has been chosen such that no two hyperbolic components are adjacent to one another.
This can always be achieved by adding a parallel torus next to a torus between two hyperbolic components and hence adding another Seifert piece $\approx T^2 \times (0,1)$.
Let $M_{\textnormal{hyp}}$ be the union of the closures of all hyperbolic pieces of this decomposition and $M_{\textnormal{Seif}}$ the union of the closures of all Seifert pieces.
Then $M = M_{\textnormal{hyp}} \cup M_{\textnormal{Seif}}$ and $M_{\textnormal{hyp}} \cap M_{\textnormal{Seif}} = \partial M_{\textnormal{hyp}} = \partial M_{\textnormal{Seif}}$ is a disjoint union of embedded, incompressible $2$-tori.
Note that this construction parallels the ``thick-thin decomposition'' from \cite[Proposition \ref{Prop:thickthindec}]{Bamler-LT-Perelman}.

The goal of this section is to establish the following Proposition.
In this Proposition, we need to distinguish the cases in which $M$ is covered by a $T^2$-bundle over a circle (i.e. in which $M$ is the quotient of a $3$-torus, the Heisenberg manifold or the Solvmanifold) and in which it is not.
It is not known to the author whether part (a) of the Proposition actually holds in both cases.

\begin{Proposition} \label{Prop:maincombinatorialresult}
There is a finite simplicial complex $V$ and a constant $C < \infty$ such that the following holds:
\begin{enumerate}[label=(\alph*)]
\item In the case in which $M$ is not covered by a $T^2$-bundle over a circle there is a map
\[ f_0 : V \to M \qquad \text{with} \qquad f_0 (\partial V) \subset \partial M_{\textnormal{Seif}} \]
that is a smooth immersion on $\partial V$ such that the following holds:
Let $S \subset \Int M_{\textnormal{Seif}}$, $S \approx S^1 \times D^2$ be an embedded solid torus whose fundamental group injects into the fundamental group of $M$ (i.e. $S$ is incompressible in $M$).
Let moreover $f : V \to M$ be a piecewise smooth map that is homotopic to $f_0$ relative $\partial V$ and $g$ a Riemannian metric on $M$.
Then $f(V) \cap S \neq \emptyset$ and we can find a compact, smooth domain $\Sigma \subset \IR^2$ and a smooth map $h : \Sigma \to S$ such that $h(\partial \Sigma) \subset \partial S$ and such that $h$ restricted to the interior boundary circles of $\Sigma$ of  is contractible in $\partial S$ and $h$ restricted to the exterior boundary circle of $\Sigma$ is non-contractible in $\partial S$ and such that
\[ \area  h < C \area f. \]
\item In the case in which $M$ is covered by a $T^2$-bundle over a circle the following holds:
$\partial V = 0$ and there are continuous maps 
\[ f_1, f_2, \ldots : V \to M \]
such that for every $n \geq 1$, every map $f'_n : V \to M$ that is homotopic to $f_n$ and every embedded loop $\sigma \subset M$, with the property that all non-trivial multiples of $\sigma$ are non-contractible in $M$, the map $f'_n$ intersects $\sigma$ at least $n$ times, i.e. ${f'}_n^{-1} (\sigma)$ contains at least $n$ points.
\end{enumerate}
\end{Proposition}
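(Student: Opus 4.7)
The strategy is to construct $V$ piece by piece from the geometric decomposition $M = M_{\textnormal{hyp}} \cup M_{\textnormal{Seif}}$. For each Seifert component $N \subset M_{\textnormal{Seif}}$ with base orbifold $B$, I would choose a finite collection of disjoint embedded arcs and simple closed curves in $B$ that cut $B$ into disks each containing at most one cone point; the Seifert preimages of these are annuli and tori in $N$ whose free boundary lies in $\partial N$. Together with the boundary tori $\partial N$ themselves, they form a $2$-complex $V_N \subset N$. For each hyperbolic component I would only include its boundary tori. Gluing the $V_N$ along common boundary tori of adjacent pieces produces $V$, and $f_0 : V \to M$ is the natural inclusion, which by construction sends $\partial V$ into $\partial M_{\textnormal{Seif}}$.

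For part (a), given an incompressible solid torus $S \subset \Int M_{\textnormal{Seif}}$, let $N_S$ be the Seifert component containing $S$. Combining Lemma \ref{Lem:Seifertfiberincompressible}, the incompressibility of $S$ in $M$, and the hypothesis that $M$ is not covered by a $T^2$-bundle over a circle, one can argue that the core of $S$ projects in $N_S$ to a homotopically non-trivial element that is not purely a multiple of the generic Seifert fiber, which via the cut system on $B$ forces $f_0(V) \cap S$ to contain a subsurface whose boundary represents a generator of $H_1(\partial S)$ that is non-trivial in $H_1(S)$. Since this is a homotopy invariant, the same holds for any $f$ homotopic to $f_0$ relative $\partial V$. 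Then I would pass to $\Sigma_0 := f^{-1}(S) \subset V$, discard components whose boundary is null-homotopic in $\partial S$, perform $\partial$-compressions and cut-and-paste simplifications along $\partial S$, and finally replace the remaining surface by a planar domain whose exterior boundary realizes the correct homology class. The area estimate $\area h < C \area f$ then follows because at each step the area either decreases or is multiplied by a factor that depends only on the fixed combinatorics of $V$.

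For part (b), I would pass to a finite cover $\widehat M \to M$ which is an honest $T^2$-bundle over $S^1$. I would take $V$ to be a small fixed complex (a disjoint union of two tori, or a larger one in the Heisenberg and Solv cases where the monodromy acts non-trivially on $H_1$) that maps to $\widehat M$ as a fiber together with a surface ``dual to the base direction,'' and then project to $M$ to obtain a base map. The maps $f_n$ are then defined by precomposing with a degree-$n$ self-covering in the base-circle direction (and, when necessary, in a monodromy-invariant fiber direction as well). Every embedded loop $\sigma$ with all multiples non-contractible has a lift to $\widehat M$ of infinite order in $\pi_1(\widehat M)$, so its algebraic intersection numbers with the two chosen $2$-cycles in $V$ cannot both vanish; by construction each of these intersection numbers is multiplied by $n$ in $f_n$, giving the required lower bound of $n$ geometric preimages after any homotopy.

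The main obstacle, as the introduction foreshadows, is the area bound in part (a). The intersection $f(V) \cap S$ can be combinatorially arbitrarily complex, with no a priori bound on the number of components, edges, or multiply-covered regions, so extracting from it a single planar compressing domain $\Sigma \subset \IR^2$ whose area is controlled by a \emph{universal} constant times $\area f$ requires a delicate discarding-and-reassembly argument that tracks multiplicities face by face and uses the fixed combinatorics of $V$ in an essential way. This is where the argument departs from routine surface topology and becomes genuinely combinatorial-geometric, and it is precisely why the much simpler intersection-number statement mentioned in the introduction does not already suffice.
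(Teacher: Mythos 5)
Your sketch for part (a) reproduces the paper's \emph{setup} (build $V$ from a cut system on the Seifert base orbifolds together with the decomposition tori), but it does not actually contain the key idea that makes the area bound work, and indeed you acknowledge at the end that the ``discarding-and-reassembly argument that tracks multiplicities face by face'' is missing. The ``$\partial$-compressions and cut-and-paste simplifications'' you propose cannot give a universal constant $C$: the number of such moves is governed by the combinatorics of $f^{-1}(S)$, which depends on $f$ (an arbitrary homotope of $f_0$) and is a priori unbounded, so there is no way to bound the multiplicative loss by ``a factor that depends only on the fixed combinatorics of $V$.'' The paper's actual mechanism is entirely different: it first passes to a finite cover so that every Seifert piece is a genuine product $\Sigma_j \times S^1$ (Lemma~\ref{Lem:ProductstructinFiniteCovering}), then works in the universal cover $\td M$, introduces a weighted combinatorial metric on the set of cells, proves a combinatorial convexity estimate (Propositions~\ref{Prop:combinatorialconvexityseveralSeifert}, \ref{Prop:combinatorialconvexityCaseB}), uses it to build a nested family of ``polyhedral balls'' $P_{R_0}(Q_0) \subset \cdots \subset P_{R_e}(Q_0)$ whose boundary spheres are disjoint subsets of $\td V$, and then applies a pigeonhole argument: the sum of the areas of $\td f^*$ restricted to these $e+1$ disjoint spheres, intersected with a controlled piece of the lift $\td S$, is at most $2N\area f$, so one of them has area $\lesssim \area f$. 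That counting step, not any surgery on $f^{-1}(S)$, is where the universal constant comes from; your outline does not replace it.

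Your sketch for part (b) has two substantive errors. First, the homological argument does not work in the Nil and Sol cases: in the Heisenberg manifold the generic Seifert fiber $\sigma$ is a commutator, hence null-homologous (or torsion) while all its multiples are non-contractible, so its algebraic intersection number with \emph{every} $2$-cycle vanishes and the claim ``its intersection numbers with the two chosen $2$-cycles cannot both vanish'' fails. The paper avoids homology entirely and instead shows, for $n=1$, that a ray $\td\sigma^+$ of a lift of $\sigma$ starting inside a fundamental cell $Q_0$ of the universal cover must cross the polyhedral sphere $\partial B_k(Q_0) \subset \td V$ for any $k$ exceeding the homotopy's displacement, an argument that does not require $[\sigma]$ to survive in $H_1$. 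Second, ``precomposing with a degree-$n$ self-covering in the base-circle direction'' is not available: an $n$-fold cover in the base direction is a $T^2$-bundle with monodromy $A^n$, which is not diffeomorphic to $M$ unless $A^n$ is conjugate to $A$. The paper instead uses the subgroup $U_m = m\IZ^2 \rtimes \IZ \le \pi_1(M)$, which is $A$-invariant and isomorphic to $\pi_1(M)$, to obtain genuine self-coverings $\pi_n : M \to M$ of degree $m^2$, and needs the number-theoretic Lemma~\ref{Lem:geomseriesinSL2Z} to guarantee that each loop in $\pi_n^{-1}(\sigma)$ covers $\sigma$ with degree at most $6^k$ (so that the count of preimage loops is at least $(3/2)^k$). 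This is the content you would have to supply to make part (b) work.
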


We will first establish part (a) of the Proposition in subsections \ref{subsec:reducifnoT2bundle}--\ref{subsec:proofofeasiermaincombinatorialresult} and then part (b) in subsection \ref{subsec:CaseT2bundleoverS1}.

\subsection{Preliminary considerations for the case in which $M$ is not covered by a $T^2$-bundle over a circle} \label{subsec:reducifnoT2bundle}
Assume in this subsection that $M$ is not covered by a $T^2$-bundle over a circle.
In order to establish part (a) of Proposition \ref{Prop:maincombinatorialresult}, it suffices to construct a simplicial complex $V$ and a map $f_0 : V \to M$ with the desired properties for every component $M' \subset M_{\textnormal{Seif}}$, i.e. $f_0 (\partial V) \subset \partial M'$, and check that the inequality involving the areas holds for every solid torus $S \subset M'$ and every homotope $f$ of $f_0$ .
We will hence from now on fix a single component $M' \subset M_{\textnormal{Seif}}$.

The next Lemma ensures that we can pass to a finite cover of $M'$ and simplify the structure of $M'$.
This simplification is not really needed in the following analysis, but it makes its presentation more comprehensible.

\begin{Lemma} \label{Lem:ProductstructinFiniteCovering}
Under the assumptions of this subsection there is a finite cover $\widehat\pi' : \widehat{M}' \to M'$ such that the following holds:
There is a Seifert decomposition $\widehat{T}_1, \ldots, \widehat{T}_m \subset \widehat{M}'$ such that the components of $\Int \widehat{M}' \setminus (\widehat{T}_1 \cup \ldots \cup \widehat{T}_m)$ are diffeomorphic to the interiors of manifolds $\widehat{M}_j = \Sigma_j \times S^1$ for $j= 1, \ldots, k$, where each $\Sigma_j$ is a compact orientable surface (possibly with boundary).
The diffeomorphisms can be chosen in such a way that they can be smoothly extended to the boundary tori.

Moreover, one of the following cases holds:
\begin{enumerate}[label=(\Alph*)]
\item $\widehat{M}'$ is diffeomorphic to $T^2 \times I$ and $m = 0$, $k = 1$.
\item $\widehat{M}'$ is closed and diffeomorphic to an $S^1$-bundle over a closed, orientable surface $\Sigma$ with $\chi(\Sigma) < 0$.
In particular, we may assume that $m = k = 1$ and the surface $\Sigma$ arises from $\Sigma_1$ by gluing together its two boundary circles.
\item $\Sigma_j$ has at least one boundary component and $\chi(\Sigma_j) < 0$ for all $j = 1, \ldots, k$ and at each torus $T_i$ the fibers coming from the $S^1$-fibration induced from either side are not homotopic to one another.
\end{enumerate}
\end{Lemma}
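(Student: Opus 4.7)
The plan is to construct the cover $\widehat\pi' : \widehat{M}' \to M'$ by successively simplifying the Seifert structure of $M'$: first eliminate the orbifold singularities of the base, then trivialize the resulting principal $S^1$-bundle, and, if necessary, cut along a single torus to reduce to a product. Writing the Seifert fibration of $M'$ as $\pi : M' \to B'$, orientability of $M$ ensures that the only singularities of the $2$-orbifold $B'$ are cone points. The first task is to rule out the ``small'' possibilities for $B'$. Concretely, if $B'$ were a bad orbifold (teardrop or asymmetric football), a spherical orbifold quotient of $S^2$, or a disk with at most one cone point, then cutting $B'$ along an arc separating the cone points (or connecting a cone point to the boundary) and lifting would express $M'$ as a union of two solid tori, or as a solid torus glued to a ball. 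By Lemma~\ref{Lem:compressingtorus}(c) and Lemma~\ref{Lem:coverMbysth}, this would force $M$ to be $S^1 \times S^2$ or a spherical space form, or else force a boundary torus of $M'$ to be compressible in $M$, all of which contradict the standing hypotheses.

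Once this first step is complete, $B'$ is a good orbifold whose underlying surface is either an annulus or has negative Euler characteristic. Choose a finite manifold cover $B \to B'$ that unwinds every cone point, and let $\widetilde{M}' \to M'$ be the associated finite cover; the pulled-back Seifert fibration $\widetilde\pi : \widetilde{M}' \to B$ is then a genuine principal $S^1$-bundle. Passing to an orientation double cover if necessary, $B$ is a compact orientable surface. The remaining case analysis is driven by $\chi(B)$ and whether $B$ has boundary. A closed $B$ with $\chi(B) = 0$ forces $B \approx T^2$ and $\widetilde{M}'$ to be an $S^1$-bundle over $T^2$, i.e.\ $T^3$ or a Heisenberg nilmanifold; both are $T^2$-bundles over $S^1$, contradicting the assumption on $M$. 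If $B$ is an annulus, then $H^2(B;\IZ) = 0$ forces triviality of the bundle, so $\widetilde{M}' \approx T^2 \times I$, which is case~(A). A closed $B$ with $\chi(B) < 0$ yields case~(B): cutting $\widetilde{M}'$ along the preimage of a non-separating simple closed curve $\gamma \subset B$ produces a single product $\Sigma_1 \times S^1$, since $B \setminus \gamma$ has boundary and the restricted bundle is therefore trivial. Finally, a $B$ with boundary and $\chi(B) < 0$ yields case~(C) with $m = 0$ and $k = 1$, again because $H^2(B;\IZ) = 0$ trivializes the bundle and the fiber-matching condition is vacuous.

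The main obstacle is the first step of ruling out the small and bad possibilities for $B'$: this requires carefully choosing arcs in $B'$ whose preimages give the correct decomposition of $M'$ into solid tori and balls, and then extracting a contradiction from the topology of $M$ using the lemmas of Section~\ref{sec:3dtopology}. A subsidiary technical point is that the diffeomorphisms $\widehat{M}_j \approx \Sigma_j \times S^1$ must extend smoothly across the boundary tori of $\widehat{M}'$; this is automatic because the Seifert fibration of $M'$ extends regularly to $\partial M'$ by the definition of the geometric decomposition (Definition~\ref{Def:geomdec}), and our covers respect this fibration.
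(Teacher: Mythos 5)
Your proposal contains a fundamental misreading of the setup. You begin by ``writing the Seifert fibration of $M'$ as $\pi : M' \to B'$,'' but $M'$ is a component of $M_{\textnormal{Seif}}$, which is the union of closures of the \emph{Seifert pieces} of the geometric decomposition of $M$ --- in general $M'$ is a graph manifold made of several Seifert pieces glued along incompressible tori, not a single Seifert fibered space. Indeed the conclusion of the lemma involves a Seifert decomposition $\widehat{T}_1,\dots,\widehat{T}_m$ of $\widehat{M}'$ with possibly many pieces, and case~(C) explicitly concerns what happens ``at each torus $T_i$'' where fibers from two adjacent pieces disagree. Your proof tacitly takes $m=0$, $k=1$ throughout (you say case~(C) holds with $m=0$, $k=1$ and that ``the fiber-matching condition is vacuous''), which only handles the degenerate subcase in which $M'$ is already a single Seifert piece.

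Because of this, the entire core difficulty of the lemma is missing from your proposal: constructing finite covers of the \emph{individual} Seifert pieces of $M'$ that restrict compatibly on the interior gluing tori, so that they can be assembled into a single covering $\widehat{M}'\to M'$. This is the content of the Luecke--Wu-style argument in the paper's proof: for each piece $M'_j$ with base orbifold $O_j$ one chooses a cover $\widehat{O}_j \to O_j$ (eliminating cone points) so that the restriction to every boundary circle is an $N$-fold cover for the same $N$, then one further covers the $S^1$-direction so that over each boundary torus of $M'_j$ the composite cover is the standard cover corresponding to the sublattice $N\IZ^2 \subset \IZ^2$; only then can the covers of the different pieces be glued along their boundaries by taking an appropriate number of copies of each. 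Your preliminary step (ruling out bad, spherical and too-small base orbifolds by producing solid-torus/ball decompositions of $M$ and invoking Lemmas~\ref{Lem:compressingtorus} and~\ref{Lem:coverMbysth}) is sound in spirit, but it must be applied to each $O_j$ individually, not to a global base ``$B'$'' that does not exist. The subsequent clean-up --- merging adjacent pieces across tori on which the two $S^1$-fibrations are isotopic, and absorbing spurious $T^2\times I$ pieces, with the resulting trichotomy (A)/(B)/(C) --- is also absent from your plan and is where the final form of the statement is actually established.
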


\begin{proof}
The arguments in this proof are similar to those in \cite[Proposition 4.4]{Luecke-Wu-93}.

Let $T_1, \ldots, T_m \subset M'$ be a Seifert decomposition of $M'$, i.e. $T_1$, \ldots, $T_m$ are pairwise disjoint, embedded, incompressible $2$-tori such that the components of $\Int M' \setminus (T_1 \cup \ldots \cup T_m)$ are diffeomorphic to the interiors of compact Seifert spaces $M'_1, \ldots, M'_m$ of $M' \setminus (T_1 \cup \ldots \cup T_m)$ whose quotient spaces are compact orbifolds $O_1, \ldots, O_m$ (possibly with boundary) whose singularities are of cone type.

We first analyze the $2$-orbifolds $O_1, \ldots, O_m$.
By Lemma \ref{Lem:compressingtorus}(c) and the fact that $M$ is aspherical we conclude that each $O_j$ is good, i.e. its interior is diffeomorphic to an isometric quotient of $S^2, \IR^2$ or $\IH^2$ (observe that otherwise we would be able to cover $M$ by two solid tori).
By the same argument and the fact that every orbifold covering of $O_j$ induces a covering of $M_j$, it follows that $O_j$ can also not be a quotient of $S^2$.
So each $O_j$ is an isometric quotient of $\IR^2$ or $\IH^2$.

If $\Int O_j$ is diffeomorphic to an isometric quotient of $\IR^2$, then there is a finite covering $\widehat{O}_j \to O_j$ such that $\widehat{O}_j$ is diffeomorphic to a torus or an annulus.
Let $\widehat{M}'_j \to M'_j$ be the induced covering.
In the first case $m = j = 1$ and $\widehat{M} = \widehat{M}' = \widehat{M}'_j$ carries an $S^1$-fibration over $T^2$.
Since $T^2$ fibers over a circle, this would however imply that $\widehat{M}$ fibers over a circle with $T^2$-fibers, in contradiction to our assumptions.
So $\widehat{O}_j$ is diffeomorphic to an annulus and $\widehat{M}'_j \approx T^2 \times I$.
We mention the following fact, which we will use later in the proof:
For every natural number $N \geq 1$, the covering $\widehat{O}_j \to O_j$ can be chosen such that its restriction to every boundary component of $\widehat{O}_j$ is an $N$-fold covering over a circle.
We can moreover pass to a covering $\widehat{M}_j \to \widehat{M}'_j$, $\widehat{M}_j \approx T^2 \times I$ such that the composition $\widehat{M}_j \to \widehat{M}'_j \to M'_j$ over each boundary torus of $M'_j$ is an $N^2$-fold covering of $n_j := 1$ or $n_j := 2$ tori over a torus that is induced by the sublattice $N \IZ^2 \subset \IZ^2$.

If $\Int O_j$ is diffeomorphic to an isometric quotient of $\IH^2$, then by an argument from \cite[Lemma 4.1]{Luecke-Wu-93} for every large enough $N \geq 2$ we can find a finite orbifold covering $\widehat{O}_j \to O_j$ such that $\widehat{O}_j$ is a manifold and such that the covering map restricted to each boundary component of $\widehat{O}_j$ is an $N$-fold covering of the circle.
Consider the induced covering $\widehat{M}'_j \to M'_j$ where $\widehat{M}'_j$ is an $S^1$-bundle over $\Int \widehat{O}_j$.
If $\widehat{O}_j$ is closed, then we are in case (B) of the Lemma, so assume in the following that none of the $\widehat{O}_j$ is closed.
The $S^1$-fibration on each $\widehat{M}'_j$ can then be trivialized, i.e. $\widehat{M}'_j = \widehat{O}_j \times S^1$.
We can hence pass to a further $N$-fold covering $\widehat{M}_j \to \widehat{M}'_j$ using an $N$-fold covering of the $S^1$-factor.
Then for some $n_j \geq 1$ the composition $\widehat{M}_j \to \widehat{M}'_j \to M'_j$ over each boundary torus of $M'_j$ is the disjoint union of $n_j$ many $N^2$-fold coverings over the torus, induced by a sublattice $N \IZ^2 \subset \IZ^2$.

Now choose $N$ large enough such that the construction of the last two paragraphs can be carried out for every $j = 1, \ldots, m$.
Observe that the coverings over every $T_i$ coming from the coverings over the two adjacent $M'_j$ consist of equivalent pieces.
Let $N_0 = n_1 \cdots n_k$ and consider $\frac{N_0}{n_j}$ many disjoint copies of $\widehat{M}_j$ for each $j = 1, \ldots, k$.
Then these copies can be glued together along their boundary to obtain a covering $\widehat{M}' \to M'$.
The Seifert decomposition on $M'$ induces a Seifert decomposition $\widehat{T}'_1, \ldots, \widehat{T}'_{m'}$ of $\widehat{M}'$ all of whose pieces are products.

We are now almost done.
As a final step we successively remove tori $\widehat{T}'_i$ that are adjacent to Seifert components $\approx T^2 \times (0,1)$.
Since $\widehat{M}$ cannot be a $T^2$-bundle over a circle, these Seifert components can never be adjacent to such a torus $\widehat{T}'_i$ from both sides.
At the end of this process, we are either left with a single piece $\approx T^2 \times I$ and we are in case (A) of the Lemma or none of the Seifert pieces are diffeomorphic to $T^2 \times I$.
In the latter case we also remove tori $\widehat{T}'_i$ for which the $S^1$-fibers coming from either side are homotopic to one another.
This will either result in two distinct Seifert components getting joined together or in identifying two boundary tori of a single Seifert component.
If at any point in this process the new Seifert component is closed, then we undo the last step and we are in case (B).
Otherwise, we are in case (C).
\end{proof}

We will now show that Proposition \ref{Prop:maincombinatorialresult}(a) is implied by the following Proposition.

\begin{Proposition} \label{Prop:easiermaincombinatorialresultCaseb}
Let $M_0$ be an arbitrary $3$-manifold with $\pi_2 (M_0) = 0$ and $M \subset M_0$ be an embedded, connected, orientable, compact $3$-manifold with incompressible toroidal boundary components such that the fundamental group of $M$ injects into the fundamental group of $M_0$.

Assume that $M$ satisfies one of the following conditions:
\begin{enumerate}[label=(\Alph*)]
\item $M \approx T^2 \times I$.
\item $M$ is the total space of an $S^1$-bundle over a closed, orientable surface $\Sigma$ with $\chi (\Sigma) < 0$.
\item $M$ admits a Seifert decomposition $T_1, \ldots, T_m \subset M$ such that the components of $\Int M \setminus (T_1 \cup \ldots \cup T_m)$ are diffeomorphic to the interiors of $M_j = \Sigma_j \times S^1$ for $j = 1, \ldots, k$, where each $\Sigma_j$ is a compact orientable surface with at least one boundary component and $\chi (\Sigma_j) < 0$.
The diffeomorphisms can be chosen in such a way that they can be smoothly extended to the boundary tori.
Moreover, at each $T_i$ the fibers of the $S^1$-fibrations induced from the manifold $M_j$ on either side are not homotopic to one another.
\end{enumerate}
Then there is a constant $C < \infty$, a simplicial complex $V$ and a continuous map
\[ f_0 : V \to M \qquad \text{with} \qquad f_0 (\partial V) \subset \partial M \]
that is a smooth immersion on $\partial V$ such that the following holds:

Let $S \subset \Int M$, $S \approx S^1 \times D^2$ be an embedded solid torus whose fundamental group injects into the fundamental group of $M$ (i.e. $S$ is incompressible in $M$).
Let moreover $f : V \to M_0$ be a piecewise smooth map that is homotopic to $f_0$ relative to $\partial V$ in $M_0$ and $g$ a Riemannian metric on $M_0$.
Then $f(V) \cap S \neq \emptyset$ and we can find a compact, smooth domain $\Sigma \subset \IR^2$ and a smooth map $h : \Sigma \to S$ such that $h(\partial \Sigma) \subset \partial S$ and such that $h$ restricted to the interior boundary circles of $\Sigma$ of  is contractible in $\partial S$ and $h$ restricted to the exterior boundary circle of $\Sigma$ is non-contractible in $\partial S$ and such that
\[ \area  h < C \area f. \]
\end{Proposition}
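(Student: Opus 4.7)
The plan is to construct $V$ and $f_0$ from the product structure guaranteed by (A), (B), (C), and then to extract the compressing domain $h$ from the intersection $f(V) \cap S$ by a combinatorial surgery argument. In case (A), I would take $V$ to be the two boundary tori together with finitely many ``vertical'' annuli $\gamma_i \times I$, one for each of a generating set of essential isotopy classes of simple loops $\gamma_i \subset T^2$, mapped by $f_0$ via the product inclusion. In case (B), I would take $V$ to be the restriction of the $S^1$-bundle $M \to \Sigma$ to a spine $\Gamma \subset \Sigma$, i.e.\ a finite graph in $\Sigma$ whose complement is a union of disks; then $V$ is a 2-complex $S^1$-fibered over $\Gamma$ and its image carries all of $\pi_1(M)$. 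In case (C), I would combine the previous two constructions on each piece $\Sigma_j \times S^1$ and include the boundary tori $T_i$ so that $f_0(\partial V) \subset \partial M$ as required. In each case $f_0$ is a smooth immersion on $\partial V$.

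Given an incompressible solid torus $S \subset \Int M$ and $f : V \to M_0$ homotopic to $f_0$ rel $\partial V$, I would perturb $f$ to be transverse to $\partial S$, so that $f^{-1}(\partial S) \subset V$ is a 1-manifold in the interior of each face of $V$ and $f^{-1}(\Int S)$ is a union of planar regions. The key algebraic-topological input is that, since $\pi_2(M_0) = 0$ and $\pi_1(M) \hookrightarrow \pi_1(M_0)$, a ``meridional intersection number'' between $f$ and a compressing disk for $S$ is invariant under homotopy rel $\partial V$ and therefore agrees with the corresponding number for $f_0$. The explicit description of $V$ and $f_0$ above should make this invariant non-zero for every incompressible $S \subset \Int M$, because the core of any such $S$ must be detected by some fibered annulus of $V$ in case (A), by some circle fiber over $\Gamma$ in case (B), or by one of these plus the boundary tori in case (C). This yields both $f(V) \cap S \neq \emptyset$ and a subcomplex of $f^{-1}(\Int S)$ whose boundary in $\partial S$ represents a non-zero multiple of the meridian class.

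From this subcomplex I would extract $\Sigma$ and $h : \Sigma \to S$ by a sequence of reductions performed face-by-face in $V$: innermost circles of $f^{-1}(\partial S)$ inside a face that bound contractible curves in $\partial S$ are capped off using disks in $\partial S$, adding zero area; bigons formed by parallel arcs of $f^{-1}(\partial S)$ in a face are collapsed; components of $f^{-1}(\Int S)$ whose entire boundary is contractible in $\partial S$ are discarded. What remains, after re-embedding in $\IR^2$ using the planarity of each surviving piece, is a smooth planar domain $\Sigma$ mapping to $S$ with boundary in $\partial S$; its exterior boundary is non-contractible in $\partial S$ because the surgery operations preserve the meridional homology class identified in the previous step.

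The main obstacle, and the combinatorial heart of the proof, is the area bound $\area h < C \area f$. A priori the number of sheets of $f^{-1}(S)$ is unbounded, so a naive accounting only gives $\area h \lesssim (\text{number of sheets}) \cdot \area f$ with no uniform constant. I would address this by arguing that after the reductions each face of $V$ contributes to $\Sigma$ with multiplicity bounded by a combinatorial constant depending only on $V$ and on how the faces of $V$ sit relative to $\partial M$ in the product structure. Concretely one must show that, once all contractible caps and parallel bigons are absorbed, the remaining ``essential'' sheets are in bijection with a bounded combinatorial object (such as the set of isotopy classes of arc systems on the base surfaces $\Sigma_j$ realized by $f^{-1}(\partial S)$), and that the non-homotopy of the $S^1$-fibers at boundary tori in case (C) prevents these sheets from proliferating across adjacent pieces. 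This is where the special structural hypotheses (A), (B), (C) are essential, and it is the step I expect to occupy most of the work in the subsequent subsections.
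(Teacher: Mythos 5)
Your construction of $V$ is in the right spirit (though the paper's $V$ also includes a horizontal section $\Sigma_j \times\{\mathrm{pt}\}$ in each Seifert piece, not just the fibered $2$-complex over a spine and the boundary tori; these sections are essential to the paper's machinery), and the intersection-number argument for $f(V)\cap S \neq\emptyset$ is fine. But the crucial area bound is where the proposal breaks down. You claim that after capping contractible circles, collapsing bigons, and discarding inessential pieces, ``each face of $V$ contributes to $\Sigma$ with multiplicity bounded by a combinatorial constant.'' This is false, and the paper explicitly warns against it: the intersection $f(V)\cap S$ ``can a priori be arbitrarily complex, without any bound on the number of edges.'' Even after all your reductions there is no uniform bound on the number of essential sheets of $f^{-1}(\Int S)$, because the core of $S$ can be a long, complicated incompressible curve in $M$ that an arbitrary homotope $f$ of $f_0$ can wrap around many times. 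A bijection with ``isotopy classes of arc systems on the $\Sigma_j$'' does not help, since the set of such isotopy classes is infinite. So there is no way to extract $h$ by purely local, face-by-face surgery and get a uniform constant $C$.

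The paper's actual mechanism is global and completely different: it works in the universal cover $\td M$, endows the cell decomposition of $\td M$ by lifts of $V$ with a weighted combinatorial metric (weights $\eta$ and $H$), proves a combinatorial convexity estimate (Propositions 3.14/3.19), and uses it to produce a one-parameter family of ``convex'' polyhedral balls $P_{R_0}(Q_0) \subsetneq P_{R_1}(Q_0) \subsetneq \cdots \subsetneq P_{R_e}(Q_0)$ whose boundary spheres are pairwise disjoint subsets of $\td V$ and all separate a fundamental-domain block of $\td S$ from far translates. The compressing domain $h$ is cut out of the image of one of these boundary spheres under the lifted homotoped map $\td f^*$. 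The uniform area bound then comes not from any per-face multiplicity bound but from a pigeonhole/averaging argument: the total area of the $e+1$ sphere-images inside $\td S_+ \cup \td S_-$ is at most $2N\,\area f$, and since $e$ grows linearly in $N$ one of them has area at most $4c^{-1}C_2\,\area f$. This averaging idea, and the combinatorial convexity needed to generate the disjoint family of separating spheres, is the missing ingredient in your argument.
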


\begin{proof}[Proof that Proposition \ref{Prop:easiermaincombinatorialresultCaseb} implies Proposition \ref{Prop:maincombinatorialresult}(b)]
Let $M = M_{\textnormal{hyp}} \cup M_{\textnormal{Seif}}$ be a closed, orientable, irreducible manifold as defined in subsection \ref{subsec:CombinatorialSetup} and $M'$ a component of $M_{\textnormal{Seif}}$.
By van Kampen's Theorem the fundamental group of $M'$ injects into that of $M$.
Consider now the finite covering $\widehat\pi' : \widehat{M}' \to M'$ from Lemma \ref{Lem:ProductstructinFiniteCovering}.
Choose $p \in \widehat{M}'$ and consider the push forward $\widehat\pi'_* (\pi_1 (\widehat{M}', p))$ inside $\pi_1 ( M, \widehat\pi'_* (p))$.
This subgroup induces a covering $\widehat\pi : \widehat{M} \to M$, which can be seen as an extension of $\widehat\pi' : \widehat{M}' \to M'$.
Still, the fundamental group of $\widehat{M}'$ injects into that of $\widehat{M}$.

The cases (A)--(C) of Lemma \ref{Lem:ProductstructinFiniteCovering} for $\widehat{M}'$ correspond to the conditions (A)--(C) in Proposition \ref{Prop:easiermaincombinatorialresultCaseb}.
So we can apply Proposition \ref{Prop:easiermaincombinatorialresultCaseb}  for $M \leftarrow \widehat{M}'$, $M_0 \leftarrow \widehat{M}$ and obtain a simplicial complex $V$ and a map $\widehat{f}_0 : V \to \widehat{M}'$ (observe that $\pi_2(\widehat{M}) = \pi_2(M) = 0$ by Proposition \ref{Prop:pi2irred} and by the fact that $M$ is irreducible).
Set $f_0 = \widehat\pi \circ \widehat{f}_0 : V \to M$.
Then we can lift any homotopy between $f_0$ and a map $f : V \to M$ to a homotopy between $\widehat{f}_0$ and $\widehat{f} : V \to \widehat{M}$ such that $f = \widehat\pi \circ \widehat{f}$.
Consider now an incompressible solid torus $S \subset M'$ and choose a component $\widehat{S} \subset \widehat\pi^{-1} (S) \cap \widehat{M}'$.
Since $\widehat\pi'$ is a finite covering, we find that $\widehat{S}$ is a solid torus as well, which is incompressible in $\widehat{M}'$.
So Proposition \ref{Prop:easiermaincombinatorialresultCaseb} provides a compact, smooth domain $\Sigma \subset \IR^2$ and a map $\widehat{h} : \Sigma \to \widehat{M}$ such that $\widehat{h}$ restricted to the exterior boundary circle of $\Sigma$ is non-contractible in $\partial \widehat{S}$, but $\widehat{h}$ restricted to the other boundary circles is contractible in $\partial \widehat{S}$.
Therefore, $h = \widehat\pi \circ \widehat{h}$ has the desired topological properties and we have
\[ \area h  = \area \widehat{h} < C \area \widehat{f} = C \area f. \]
This finishes the proof.
\end{proof}

In the following four subsections, we will frequently refer to the conditions (A)--(C).
We first finish off the case in which $M$ satisfies condition (A).

\begin{Proposition} \label{Prop:CasAiseasy}
Proposition \ref{Prop:easiermaincombinatorialresultCaseb} holds if $M$ satisfies condition (A).
\end{Proposition}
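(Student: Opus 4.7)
The plan is to take $V = A_1 \sqcup A_2$ to be the disjoint union of two triangulated annuli, and to define $f_0 : V \to M = T^2 \times I$ as the map sending each $A_i \approx [0,1] \times S^1$ onto the vertical annulus $\gamma_i \times I$, where $\gamma_1, \gamma_2 \subset T^2 \times \{1/2\}$ are embedded loops forming a basis of $\pi_1(T^2 \times \{1/2\}) \cong \IZ^2$ and meeting transversely in one point. Then $f_0(\partial V)\subset\partial M$ and $f_0|_{\partial V}$ is a smooth embedding of four circles, as required.

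First I would verify that $f(V) \cap S \neq \emptyset$. Let $(a,b) \in \IZ^2\setminus\{0\}$ be the homology class in $H_1(M)$ of the core $\lambda$ of $S$ (nonzero by incompressibility). With suitable orientations, the relative classes $[A_1], [A_2] \in H_2(M, \partial M) \cong \IZ^2$ pair against $H_1(M) \cong \IZ^2$ via a matrix with off-diagonal entries $\pm 1$, so $[A_1]\cdot[\lambda] = \pm b$ and $[A_2]\cdot[\lambda] = \pm a$; at least one is nonzero. By homotopy invariance of the intersection pairing in $M_0$ (using $\pi_1(M)\hookrightarrow\pi_1(M_0)$ so that $\lambda$ remains a nontrivial cycle in $M_0$), the same intersection number is realized by $f$. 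This obstructs homotoping $f|_{A_i}$ rel $\partial A_i$ into $M_0 \setminus S$, because the obstruction lives in $H_2(M_0, M_0\setminus S)\cong H_2(S,\partial S)\cong\IZ$ and equals precisely this intersection number. Hence $f(V)\cap S \supseteq f(A_i)\cap S \neq\emptyset$.

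Next I would build $h : \Sigma \to S$. Fix the annulus $A = A_i$ with nonzero intersection and, after perturbing $f$ transversely to $\partial S$, form $W := f^{-1}(S)\cap A$, a compact 2-submanifold of the planar annulus $A$ (hence itself planar) whose boundary consists of finitely many circles in $\Int A$ (since $f(\partial A)\subset\partial M$ is disjoint from $S\subset\Int M$). The boundary circles map to $\partial S$ with classes $(p_j,q_j) \in \IZ\mu\oplus\IZ\lambda = H_1(\partial S)$, and from $H_2(S)=0$ the long exact sequence of $(S,\partial S)$ gives $\sum q_j = 0$ and $\sum p_j = \pm a \neq 0$. I would then select a connected component $W_0\subset W$ with nonzero total meridional winding on its boundary (which exists by pigeonhole applied to the global sum), embed it as a planar region in $\IR^2$ with an outer boundary circle of nonzero $\mu$-class as the exterior, retain as interior circles those that are null-homotopic in $\partial S$, and eliminate the remaining ``bad'' circles (with $q_j\neq 0$) by pairing them off using $\sum q_j = 0$ and connecting each pair through an annulus in $\partial S$. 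This produces the desired planar domain $\Sigma$ and map $h : \Sigma \to S$.

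The main obstacle is controlling the combinatorial surgery in the previous step so that the resulting cap areas satisfy the bound $\area h < C\area f$ with $C$ depending only on the topology of $M$. The inequality reduces to the trivial bound $\area(f|_W) \leq \area f$ together with a length-to-area estimate for the annular caps attached in $\partial S$: the caps' areas are controlled via an isoperimetric inequality on the flat-like torus $\partial S$ by the lengths of their bounding curves on $f(\partial W)$, and those lengths are in turn bounded by a constant multiple of $\area f$ through a coarea inequality applied to a distance function from $\partial S$ within a collar of $\partial S$ in $M_0$.
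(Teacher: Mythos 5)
Your choice of $V$ (two vertical annuli generating $H_2(M,\partial M)$) and the intersection-number argument showing $f(V)\cap S\neq\emptyset$ both match the paper. The divergence, and where the proposal breaks down, is in the construction of $h$ and the area bound. You propose to form $W=f^{-1}(S)\cap A$, pick a component with nonzero total meridional winding, and then repair the ``bad'' boundary circles by attaching annular caps in $\partial S$, bounding their areas by an isoperimetric inequality on $\partial S$ and a coarea estimate near $\partial S$. This cannot give the required estimate $\area h < C\area f$ with $C$ depending only on the topology of $M$: both the isoperimetric constant of $\partial S$ and any coarea/collar estimate depend on the arbitrary metric $g$ and on the arbitrary embedded solid torus $S$, neither of which is controlled. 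Any constant extracted this way would depend on $g$ and $S$, which is exactly what the statement forbids. There is also a bookkeeping slip: every boundary circle of $W$ bounds a disk in the annulus $A$, hence is nullhomotopic in $M$, and incompressibility of $S$ forces its class in $\pi_1(\partial S)$ to be a multiple $p_j\mu$ of the meridian, so $q_j=0$ automatically — your ``bad circles with $q_j\neq0$'' never occur, while the circles that actually need handling (those with $p_j\neq0$, i.e.\ essential in $\partial S$ but meridional) are not addressed by your pairing-off step, nor is it shown that the outer boundary circle of your chosen $W_0$ has $p\neq 0$ (only the signed \emph{sum} over $\partial W_0$ is nonzero).

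The paper sidesteps all of this with a combinatorial maximality argument that requires no capping and hence no metric-dependent estimate. After perturbing to $f'$ transversal to $\partial S$ with $\area f' < 2\area f$, one looks at the components $Q_1,\dots,Q_p$ of $f'^{-1}(S)\cap A_i$; since (by incompressibility, via the argument above) none contains a core circle of $A_i$, each $Q_j$ sits inside a disk $Q'_j\subset A_i$ obtained by filling its inner boundary circles, and these disks are nested. Among those $Q_j$ whose intersection number with $\sigma$ is nonzero, choose one that is innermost, i.e.\ whose $Q'_j$ contains no other such $Q_{j'}$. Then on each component of $Q'_j\setminus Q_j$ the intersection number with $\sigma$ vanishes, which (using $\pi_2(M_0)=0$ and incompressibility of $S$) forces $f'$ on the corresponding inner boundary circle to be contractible in $\partial S$; and $f'|_{\partial Q'_j}$ is forced to be non-contractible in $\partial S$. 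So $\Sigma=Q_j\subset Q'_j\approx D^2$ and $h=f'|_{Q_j}$ already satisfy the conclusion, with $\area h \leq \area f' < 2\area f$ — i.e.\ $C=2$, completely metric-independent. You should replace the capping/isoperimetric step with this innermost-component selection.
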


\begin{proof}
Observe that $M \approx T^2 \times I \approx S^1 \times S^1 \times I$.
Denote by $A_1, A_2$ the two embedded annuli of the form
\[ \{ \textnormal{pt} \} \times S^1 \times I, \;\; S^1 \times \{ \textnormal{pt} \} \times I \subset M. \]
Let $V$ be their disjoint union and $f_0 : V \to M$ be the map that restricts to the inclusion on each component of $V$.
Moreover, fix a triangulation on $V$ to give it the structure of a simplicial complex.

Note that every non-contractible loop $\sigma \subset \Int M$ has non-zero intersection number with one of the maps $f_0 |_{A_1}$ or $f_0 |_{A_2}$.
Consider now the solid torus $S \subset M$ and let $\sigma \subset \Int S$ be a non-contractible loop inside $S$ (and hence also inside $M$).
Choose $i \in \{ 1, 2 \}$ such that $f_0 |_{A_i}$ has non-zero intersection number with $\sigma$.
Then so does $f |_{A_i}$.
Let $f' : A_i \to M$ be a small perturbation of $f |_{A_i}$ that is transversal to $\partial S$ and for which $\area f' < 2 \area f$.
Still, $f'$ has non-zero intersection number with $\sigma$.

Denote the components of $f^{\prime -1} (S)$ by $Q_1, \ldots, Q_p \subset A_i \approx S^1 \times I$.
The sum of the intersection numbers of $f' |_{Q_j}$ with $\sigma$ is non-zero.
Moreover, by the choice of $i$ none of these components $Q_j$ can contain a circle that is non-contractible in $A_i$.
So each $Q_j$ is contained in a closed disk $Q'_j \subset A_i$ with $\partial Q'_j \subset \partial Q_j$, which arises from filling in all its interior boundary circles.
Note that any two such disks, $Q'_{j_1}, Q'_{j_2}$ are either disjoint or one is contained in the other.
By a maximality argument, we can choose $j \in \{ 1, \ldots, p \}$ such that the intersection number of $f' |_{Q_j}$ with $\sigma$ is non-zero, but such that $Q'_j$ does not contain any other $Q_{j'}$ with the same property.
Then $f'$ has to have zero intersection number with $\sigma$ on every component of $Q'_j \setminus Q_j$.
Hence, $f'$ restricted to every circle of $\partial Q_j \setminus \partial Q'_j$ is contractible in $\partial S$ and $f'$ restricted to $\partial Q'_j$ is non-contractible.
So if we choose $\Sigma = Q_j \subset Q'_j \approx D^2 \subset \IR^2$ and $h = f' |_{Q_j}$, then the desired properties are fulfilled and $\area h < \area f' < 2 \area f$.
\end{proof}

It remains to prove Proposition \ref{Prop:easiermaincombinatorialresultCaseb} in the cases in which $M$ satisfies condition (B) or (C).
Its proof in these two cases will be carried out in subsection \ref{subsec:proofofeasiermaincombinatorialresult}.
The proof makes use of a simplicial complex $V$, which will be constructed and analyzed in the following subsection and relies on a certain combinatorial convexity estimate on $V$, which will be derived in subsection \ref{subsec:combconvexincaseC} for case (C) and in subsection \ref{subsec:combconvexincaseB} for case (B).

\subsection{Combinatorial geometry of $\td{M}$ if $M$ satisfies condition (B) or (C)} \label{subsec:constructionofV}
In this subsection we will set up the proof of Proposition \ref{Prop:easiermaincombinatorialresultCaseb}.
In particular, we will construct the simplicial complex $V$ and introduce the tools that will be needed in the following two subsections.

Assume that $M$ satisfies condition (B) or (C) in Proposition \ref{Prop:easiermaincombinatorialresultCaseb}, i.e. $M$ is a compact, connected, orientable $3$-manifold with incompressible toroidal boundary components.
If $M$ satisfies condition (C), we fix the Seifert decomposition $T_1, \ldots, T_m$ of $M$ as well as the identifications of the components of $\Int M \setminus (T_1 \cup \ldots \cup T_m)$ with the interiors of the products $M_j \approx \Sigma_j \times S^1$ ($j = 1, \ldots, k$).
Here $\Sigma_1, \ldots, \Sigma_m$ are compact surfaces with at least one boundary component and negative Euler characteristic.
If $M$ satisfies condition (B), then we set $m = k = 1$ and we can find a torus $T_1 \subset M$ such that $M \setminus T_1$ is diffeomorphic to the interior of the product $\Sigma_1 \times S^1$, where $\Sigma_1$ is a compact, orientable surface with two boundary circles, which can be obtained from $\Sigma$ by cutting along a non-separating, embedded loop.
Moreover, $\chi(\Sigma_1) = \chi(\Sigma) < 0$.
In either case, we assume that the diffeomorphisms that identify the interior of each $M_j$ with the corresponding component of $\Int M \setminus (T_1 \cup \ldots \cup T_m)$ can be continued smoothly up to the boundary tori.
If $M$ satisfies condition (C), then the fibrations coming from either side of each torus $T_i$ are assumed to be non-homotopic to one another and in case (B) we assume that the fibration on $M_1$ has been chosen such that both fibrations agree.

We will mainly be working in the universal covering $\td{M}$ of $M$.
Let $\pi : \td{M} \to M$ be the covering projection.

\begin{Definition}[chambers]
The closures $K \subset \td{M}$ of components of the preimages of components of $M \setminus (T_1 \cup \ldots \cup T_m)$ under $\pi$ are called \emph{chambers} and the set of chambers is denoted by $\mathcal{K}$.
\end{Definition}

\begin{Definition}[walls]
The components $W$ of $\partial \td{M}$ and of the preimages $\pi^{-1} (T_i)$, $i = 1, \ldots, m$ are called \emph{walls} and the set of walls is denoted by $\mathcal{W}$.
We say that two distinct chambers $K_1, K_2 \in \mathcal{K}$ are \emph{adjacent} if they share a common wall.
\end{Definition}

By van Kampen's Theorem every chamber $K \in \mathcal{K}$ can be viewed as the universal cover of $M_{j_K}$ for a unique $j_K \in \{1, \ldots, k\}$.
So $K \approx \td\Sigma_{j_K} \times \IR$.
The boundary of $K$ is a disjoint union of walls that cover exactly the tori $T_i$ and the boundary tori of $M$ that are adjacent to $M_{j_K}$, and these tori stand in one-to-one correspondence with the boundary circles of $\Sigma_{j_K}$.
Moreover, every wall is diffeomorphic to $\IR^2$.
For later purposes, we will replace the $j$-index by $K$ and write for example $M_K = M_{j_K}$ and $\Sigma_K = \Sigma_{j_K}$.
Note that the interior of every chamber is disjoint from the union of all walls.
So the complement of the union of all walls in $\td{M}$ is equal to the union of the interiors of all chambers.

\begin{Lemma} \label{Lem:KKistree}
Every wall $W \in \mathcal{W}$, $W \not\subset \partial \td{M}$ separates $\td{M}$ into two components.
So every two distinct chambers $K_1, K_2 \in \mathcal{K}$ can only intersect in at most one wall $W = K_1 \cap K_2$ and the adjacency graph of $\mathcal{K}$ is a tree. 
\end{Lemma}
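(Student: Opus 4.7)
The plan is to exploit the simple-connectedness of $\td M$ together with the two-sidedness of the walls. Before starting, I would record that walls are pairwise disjoint: distinct decomposition tori $T_i, T_j \subset M$ are disjoint by Definition~\ref{Def:geomdec}, hence so are their preimages $\pi^{-1}(T_i)$ and $\pi^{-1}(T_j)$, and distinct components of a single $\pi^{-1}(T_i)$ are disjoint by the definition of ``connected component''; the boundary walls in $\partial \td M$ are similarly disjoint from the rest. Moreover each non-boundary wall $W$ is a closed, two-sided, properly embedded plane $W \approx \IR^2$, being a connected component of the preimage of a two-sided incompressible torus in the orientable manifold $M$ under the universal covering map.

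To prove that a non-boundary wall $W$ separates $\td M$, I would argue by contradiction. Two-sidedness gives a tubular neighborhood $W \times (-1,1) \subset \td M$; if $\td M \setminus W$ were connected, one could join $(p, +\tfrac12)$ to $(p, -\tfrac12)$ by a path in $\td M \setminus W$ and close it up along a short transversal arc through $p$, producing an embedded loop meeting $W$ transversally in exactly one point. This loop represents a nonzero class in $H_1(\td M; \IZ/2)$, contradicting the fact that $\td M$ is simply connected and thus $H_1(\td M; \IZ/2) = 0$.

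For the uniqueness of the common wall, suppose two distinct chambers $K_1, K_2$ share two distinct walls $W, W'$. By the previous step $W$ separates $\td M$ into open components $A_1, A_2$. Since $\Int K_i$ is connected and disjoint from $W$, each $\Int K_i$ lies in one of the $A$'s, and the adjacency of $K_1, K_2$ across $W$ forces $\Int K_1 \subset A_1$ and $\Int K_2 \subset A_2$. Now $W'$ is disjoint from $W$ by the first paragraph, so $W' \subset A_1 \sqcup A_2$; but also $W' \subset K_1 \cap K_2 \subset \overline{A_1} \cap \overline{A_2} \subset W$ (the last inclusion because $A_1, A_2$ are the components of $\td M \setminus W$ and $W$ is closed), contradicting $W \cap W' = \emptyset$.

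Finally, the connectedness of the adjacency graph follows from the connectedness of $\td M$ by taking any path between two chamber interiors and making it transversal to the union of all walls. For acyclicity, any cycle $K_1, \ldots, K_n, K_1$ of distinct chambers (necessarily of length $n \geq 3$ by the previous step) with walls $W_i = K_i \cap K_{i+1}$ would produce a continuous path running through $\Int K_1, W_1, \Int K_2, \ldots, W_{n-1}, \Int K_n$; this path avoids $W_n$ because the walls $W_i$ are pairwise disjoint and each wall $W_i$ with $i < n$ lies in the boundary of only the pair $\{K_i, K_{i+1}\} \neq \{K_1, K_n\}$, contradicting the fact that $W_n$ separates $\Int K_1$ from $\Int K_n$. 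The main conceptual step is the $\IZ/2$-intersection argument in the second paragraph; once this is in hand, the remaining claims are straightforward consequences of the pairwise disjointness of walls.
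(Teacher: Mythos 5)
Your proof is correct and the central argument --- producing a loop with intersection number $1$ with $W$ and contradicting simple-connectedness of $\td M$ --- is exactly the one the paper uses. The paper stops there and leaves the ``so'' conclusions (uniqueness of the common wall, acyclicity of the adjacency graph) unargued, while you spell them out carefully; this extra detail is correct and the needed ingredients (pairwise disjointness of walls, each wall being two-sided and contained in the frontier of exactly two chambers) are all used appropriately.
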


\begin{proof}
If $W \in \mathcal{W}$ did not separate $\td{M}$, then we could find a loop $\gamma \subset \td{M}$ that intersects $W$ transversally and exactly once, i.e. its intersection number with $W$ is $1$.
However $\gamma \subset \td{M}$ must be contractible.
\end{proof}

On each torus $T_i$ and boundary torus of $M$ we fix an affine structure and a point $e_i \in T_i$ for the remainder of this subsection.
These affine structures induce an affine structure on all walls $W \in \mathcal{W}$.
We can assume that the product structures on each $M_j \approx \Sigma_j \times S^1$ are chosen such that the circle fibers on each boundary component of $M_j$ coming from the $S^1$-factor and the boundary circle of the $\Sigma_j$ are geodesic circles in the corresponding torus $T_i$.

Now, for each $j = 1, \ldots, k$ we choose an embedded section $S_j \subset M_j \approx \Sigma_j \times S^1$ of the form $\Sigma_j \times \{ \text{pt} \}$.
Next, we choose embedded and pairwise disjoint arcs inside each $\Sigma_j$, whose endpoints lie in the boundary of $\Sigma_j$ and that cut the interior of $\Sigma_j$ into a topological ball.
Denote their union by $C^*_j \subset \Sigma_j$ and set $C_j = C^*_j \times S^1$.
Let now
\[ V = T_1 \cup \ldots \cup T_m \cup S_1 \cup \ldots \cup S_k \cup C_1 \cup \ldots \cup C_k. \]
The complement $\Int M \setminus V$ is a disjoint union of $k$ topological balls $\approx (\Sigma_1 \setminus C^*_1) \times (0,1), \ldots, (\Sigma_k \setminus C^*_k) \times (0,1)$.

By construction, $V$ can be seen as an embedded, finite and pure 2-dimensional polygonal complex with $\partial V \subset \partial M$.
The notion of a ``polygonal complex'' generalizes the notion of a ``simplicial complex'' from Definition \ref{Def:simplcomplexrecall}, by allowing the faces to be polygons instead of 2-simplices.
The $1$-skeleton $V^{(1)}$ of $V$, seen as a polygonal complex, is the union of $\partial S_j$, $\partial C_j$ and $C_j \cap S_j$ for all $j = 1, \ldots, k$.
And the set of vertices $V^{(0)}$ of $V$ is contained in $T_1 \cup \ldots \cup T_m \cup \partial M$.
By subdividing the polygonal faces of $V$ into triangles, we can give $V$ the structure of a finite and pure simplicial complex.
In doing so, we increase the number of edges and faces of $V$, but the topology of $V$ still remains the same.
In the following, we will fix this simplicial structure, and the structure of $V$ as a polygonal complex will not be essential for us anymore.

Consider now the universal covering $\pi : \td{M} \to M$ and set $\td{V} = \pi^{-1} (V) \subset \td{M}$.
Then $\td{V}$ inherits the structure of an infinite simplicial complex with $\partial \td{V} \subset \partial \td{M}$ and the components of $\Int \td{M} \setminus \td{V}$ are topological balls on which $\pi$ is injective.
Their boundaries are diffeomorphic to simplicial $2$-spheres.

\begin{Definition}[cells]
The closure $Q$ of any component component of $\td{M} \setminus \td{V}$ is called a \emph{cell} and the set of cells is denoted by $\mathcal{Q}$.
Two distinct cells are called \emph{adjacent} if their intersection contains a point of $\td{V} \setminus \td{V}^{(1)}$.
Here $\td{V}^{(1)}$ denotes the 1-skeleton of $\td{V}$ viewed as a simplicial complex.
(Note that the notion of adjacent cells would be the same, if $\td{V}^{(1)}$ denoted the 1-skeleton of $\td{V}$ with respect to the inherited polygonal structure on $\td{V}$ as opposed to the simplicial structure.)
\end{Definition}

So every chamber $K \in \mathcal{K}$ is equal to the union of cells $Q \subset K$.
Identify $K$ with $\td\Sigma_K \times \IR$ as before and set $C_K = C_{j_K}$ and $C^*_K = C^*_{j_K}$.
The structure of $\td{V}$ in $K$ can then be understood as follows:
Let $\td{C}^*_K$ be the preimage of $C^*_K$ under the universal covering map $\td\Sigma_K \to \Sigma_K$.
Then $\td{V} \cap K$ is equal to the union of $\pi^{-1} (C_K) \cap K \approx \td{C}^*_K \times \IR$ with $\pi^{-1} (S_K) \cap K$ and $\partial K$.
So the arrangement of the cells $Q \subset K$ is reflected by the following identity
\begin{equation} \label{eq:arrangementQinK}
\bigcup_{Q \in \mathcal{Q}, \; Q \subset K} \Int Q = (\Int \td\Sigma_K \setminus \td{C}^*_K) \times (\IR \setminus \IZ).
\end{equation}
We will always refer to the first factor in this cartesian product as the \emph{horizontal} direction and to the second factor as the \emph{vertical} direction.
In the next definition we group cells that share the same horizontal coordinates.

\begin{Definition}[columns]
Consider a chamber $K \in \mathcal{K}$ and choose the identification $K \cong \td\Sigma_K \times \IR$ as in the last paragraph.
Then the closure $E$ of each component of $(\Int \td\Sigma_K \setminus \td{C}^*_K) \times \IR$ is called a \emph{column}.
The set of columns of $K$ is denoted by $\mathcal{E}_K$.

We say that two distinct columns $E_1, E_2 \in \mathcal{E}_K$ are \emph{adjacent} if they intersect.
An ordered tuple $(E_0, \ldots, E_n)$ of columns for which $E_i$ is adjacent to $E_{i+1}$ is called a \emph{chain between $E_0$ and $E_n$} and $n$ is called its \emph{length}.
It is called \emph{minimal} if its length is minimal amongst all chains between the same columns.
\end{Definition}

So each chamber $K \in \mathcal{K}$ is equal to the union of all its columns $E \in \mathcal{E}_K$ and every such column $E$ consists of cells $Q \subset E$, which are arranged in a linear manner.
Next, we define distance functions with respect to the horizontal and vertical direction in (\ref{eq:arrangementQinK}).

\begin{Definition}[horizontal and vertical distance within a chamber]
Let $K \in \mathcal{K}$ be a chamber and $E_1, E_2 \in \mathcal{E}_K$ two columns.
We define their \emph{horizontal distance  $\dist^H_K (E_1, E_2)$ (within $K$)} to be the minimal length of a chain between $E_1$ and $E_2$.
For two cells $Q_1, Q_2 \subset K$ with $Q_1 \subset E_1$ and $Q_2 \subset E_2$ we define the \emph{horizontal distance $\dist^H_K (Q_1, Q_2) = \dist^H_K (E_1, E_2)$ (within $K$)}.
We say that $Q_1, Q_2$ are \emph{vertically aligned (within $K$)} if $\dist^H_K (Q_1, Q_2) = 0$, i.e. if $Q_1, Q_2$ lie in the same column.

For two cells $Q_1, Q_2 \subset K$ we define the \emph{vertical distance $\dist^V_K (Q_1, Q_2)$ (within $K$)} by the minimal number of times that an arc $\gamma : [0,1] \to K$ with $\gamma(0) \in \Int Q_1$ and $\gamma(1) \in \Int Q_2$ intersects $\pi^{-1} (S_K)$, i.e. the number if integers between the second coordinates of both cells in (\ref{eq:arrangementQinK}).
We say that $Q_1, Q_2$ are \emph{horizontally aligned (within $K$)} if $\dist^V_K (Q_1, Q_2) = 0$.
\end{Definition}

Obviously, both distance functions satisfy the triangle inequality.
Two cells $Q_1, Q_2 \subset K$ are adjacent if and only if $\dist^H_K(Q_1, Q_2) + \dist^V_K(Q_1, Q_2) = 1$.
And they are disjoint if and only if this sum is $\geq 2$ and not both summands are equal to $1$.

\begin{Lemma} \label{Lem:EEinKisatree}
Assume that $M$ satisfies condition (B) or (C).
Consider a chamber $K \in \mathcal{K}$.
Then the set of columns $\mathcal{E}_K$ together with the adjacency relation describes a tree with constant valency $\geq 4$.
So for every two columns $E_1, E_2 \in \mathcal{E}_K$, there is a unique minimal chain between $E_1, E_2$ and a chain between $E_1, E_2$ is the minimal one if and only if it  contains each column not more than once.
Moreover, for every three columns $E_1, E_2, E_3 \in \mathcal{E}_K$ there is a unique column $E^* \in \mathcal{E}_K$ that lies on all three minimizing chains between every pair of $E_1, E_2, E_3$.

Finally, for every two columns $E_1, E_2 \in \mathcal{E}_K$ with $\dist^H_K (E_1, E_2) \geq 2$ there is at most one wall $W \in \mathcal{W}$ that is adjacent to both $E_1$ and $E_2$.
\end{Lemma}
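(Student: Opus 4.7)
The plan is to identify $\mathcal{E}_K$ with the dual graph of the decomposition of $\td{\Sigma}_K$ by the cut-arcs $\td{C}^*_K$: columns correspond to fundamental domains (closures of components of $\td{\Sigma}_K \setminus \td{C}^*_K$) and adjacencies to shared cut-arcs. Since $\td{\Sigma}_K$ is a simply connected $2$-manifold with boundary (in case (B) after cutting along the non-separating loop, in case (C) directly, each $\Sigma_j$ being aspherical with $\chi < 0$), every properly embedded arc in $\td{\Sigma}_K$ separates it, so each arc of $\td{C}^*_K$ separates $\td{\Sigma}_K$, and hence the dual graph is a tree. The valency of every vertex equals the number $2n$ of cut-arc sides on the boundary of the fundamental domain, where $n = 1 - \chi(\Sigma_K) \geq 2$; this is constant since all fundamental domains are deck-translates of one another, and $2n \geq 4$ as claimed.

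The standard properties of trees then immediately yield the uniqueness of the minimal chain between two columns, the characterization of a chain as minimal if and only if no column is repeated, and the existence of a unique common column $E^*$ lying on the three minimal chains between any three columns (the ``median'' of a tree).

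For the last statement, I first establish the auxiliary claim that for each wall $W$, corresponding to a boundary line $L$ of $\td{\Sigma}_K$, the set $V(L) \subset \mathcal{E}_K$ of columns adjacent to $W$ forms a simple path in the tree, and in particular each column of $V(L)$ has exactly one boundary-edge on $L$. The boundary-edges of fundamental domains on $L$ tile $L$ (up to the cut-arc endpoint vertices), and two fundamental domains whose $L$-edges are consecutive are tree-adjacent via the cut-arc at the separating vertex. If some column appeared twice in this $L$-ordering, the resulting closed walk in the tree would force backtracking by a consecutive triple $(F, F', F)$, requiring two distinct cut-arcs between $F$ and $F'$, which is impossible in a tree.

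To finish, suppose toward a contradiction that $E_1, E_2$ with $\dist^H_K(E_1, E_2) \geq 2$ are both adjacent to two distinct walls $W, W'$, with corresponding lines $L \neq L'$. The unique minimal chain between $E_1$ and $E_2$ lies in both simple paths $V(L)$ and $V(L')$, so any interior column $F$ on this chain is in $V(L) \cap V(L')$ and has exactly two vertices on $L$ and two on $L'$. The cut-arcs connecting $F$ to its two path-neighbors are consecutive with $F$ in both the $L$-ordering of $V(L)$ and the $L'$-ordering of $V(L')$, so each of them has one endpoint on $L$ and one on $L'$, accounting for all four of $F$'s vertices on $L \cup L'$. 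Tracing $\partial F$ around these four vertices, the two cut-arcs together with the unique $L$-boundary-edge and unique $L'$-boundary-edge of $F$ close up into a $4$-cycle, forcing $F$ to be a quadrilateral. But $F$ has $4n \geq 8$ edges, a contradiction. The key technical obstacle is the cyclic-order argument forcing the $4$-cycle; the rest is combinatorial bookkeeping.
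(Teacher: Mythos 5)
Your proof is correct, and for the final assertion it takes a genuinely different route from the paper. For the tree property the two arguments coincide in substance (you invoke separation by a properly embedded arc in the simply connected $\td\Sigma_K$; the paper phrases this as an intersection-number argument as in Lemma \ref{Lem:KKistree}), and you add an explicit computation of the constant valency $2n = 2\bigl(1 - \chi(\Sigma_K)\bigr) \geq 4$, which the paper asserts without comment. For the last statement, however, the paper uses a topological compactness argument: if $B_1, B_2$ were both adjacent to non-adjacent fundamental domains $U_1, U_2$, then $\ov{B_1 \cup B_2 \cup U_1 \cup U_2}$ would bound a region $A_1$ with compact closure containing finitely many fundamental domains, all of whose tree-neighbors lie in $A_1 \cup \{U_1, U_2\}$; this finite subforest would have a leaf whose tree-valency must be $\geq 4$, a contradiction. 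Your argument instead establishes the auxiliary structural claim that the columns $V(L)$ abutting a fixed boundary line $L$ form a bi-infinite simple path in the tree (which the paper never isolates), and then derives the contradiction purely combinatorially: an interior column of the minimal chain, being forced to have both its boundary cut-arcs connect $L$ to $L'$, would have to be a quadrilateral, impossible when $4n \geq 8$. Your route has the advantage of being entirely local and avoiding the separation/compactness step, and the $V(L)$-is-a-path observation is a clean structural fact worth recording; the paper's route is shorter on the page but is tacitly using the valency bound and the fact that $\dist^H_K \geq 2$ forces $A_1 \neq \emptyset$, neither of which it spells out.
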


\begin{proof}
By an intersection number argument as in the proof of Lemma \ref{Lem:KKistree}, we find that a loop in $\td\Sigma_K$ cannot cross a component of $\td{C}^*_K \subset \td\Sigma_K$ exactly once.
This establishes the tree property.

Now assume that there are two distinct boundary components $B_1, B_2 \subset \partial \td\Sigma_K$ that are adjacent to two distinct components $U_1, U_2 \subset \td\Sigma_K \setminus \td{C}^*_K$ at the same time.
Since $\td\Sigma_K$ is simply connected, the closure of the set $B_1 \cup B_2 \cup U_1 \cup U_2$ separates $\td\Sigma_K$ into two open components $A_1, A_2$ one of which, say $A_1$, has compact closure.
So $A_1$ only contains finitely many components of $\td\Sigma_K \setminus \td{C}^*_K$ and all these components are only adjacent to each other or to $U_1$ or $U_2$.
This however contradicts the tree property.
\end{proof}

In the following we want to understand the adjacency structure of $\mathcal{Q}$ on $\td{M}$.
As a first step we analyze its structure near walls.

\begin{Lemma} \label{Lem:QQestimatesatW}
There is a constant $C_0 < \infty$ such that the following holds:

Let $W \in \mathcal{W}$, $W \not\subset \partial M$ be a wall and let $K, K' \in \mathcal{K}$ be the chambers that are adjacent to $W$ from either side.
Then the columns $E \in \mathcal{E}_K$, $E' \in \mathcal{E}_{K'}$ intersect $W$ in affine strips $E \cap W$, $E' \cap W$ (i.e. domains bounded by two parallel straight lines).
In case in which $M$ satisfies condition (B), these strips are all parallel and if $M$ satisfies condition (C), each pair of strips coming from $K$ and $K'$ are not parallel to one another; so they intersect in a non-empty compact set.

We furthermore have the following estimates between the horizontal and vertical distance functions in $K$ and $K'$:
\begin{enumerate}[label=(\alph*)]
\item Assume that $M$ satisfies condition (B) or (C) and let $Q_1, Q_2 \subset K$ be cells that are adjacent to a common cell $Q' \subset K'$.
Then
\[ \dist^H_K (Q_1, Q_2), \; \dist^V_K (Q_1, Q_2) < C_0. \]
\item Assume that $M$ satisfies condition (C) and let $Q_1, Q_2 \subset K$, $Q'_1, Q'_2 \subset K'$ be cells such that $Q_1, Q'_1$ and $Q_2, Q'_2$ are adjacent and such that $Q'_1, Q'_2$ are vertically aligned.
Then
\[ \dist^V_K (Q_1, Q_2), \; \dist^V_{K'}(Q'_1, Q'_2) < C_0 \dist^H_K (Q_1, Q_2) + C_0. \]
\item Assume that $M$ satisfies condition (C) and consider four cells $Q_1$, $Q_2$, $Q_3$, $Q_4 \subset K$.
Assume that $Q_1, Q_2$ and $Q_3, Q_4$ are vertically aligned and assume that there are columns $E'_1, E'_2 \in \mathcal{E}_{K'}$ such that $Q_1, Q_4$ are adjacent to some cells in $E'_1$ and $Q_2, Q_3$ are adjacent to some cells in $E'_2$.
Then
\[ \big| \dist^V_K (Q_1, Q_2) - \dist^V_K (Q_3, Q_4) \big| < C_0. \]
\item Assume that $M$ satisfies condition (C) and consider cells $Q_1, Q_2 \subset K$ and $Q'_1, Q'_2 \subset K'$ such that $Q_1, Q'_1$ and $Q_2, Q'_2$ are adjacent and that $\dist^H_K (Q_1, Q_2), \dist^H_{K'} (Q'_1, Q'_2) \leq 3$.
Then
\[ \dist^V_K (Q_1, Q_2), \; \dist^V_{K'} (Q'_1, Q'_2) < C_0. \]
\item Assume that $M$ satisfies condition (B) and consider cells $Q_1, Q_2 \subset K$ and $Q'_1, Q'_2 \subset K'$ such that $Q_1, Q'_1$ and $Q_2, Q'_2$ are adjacent.
Then
\[ \dist^V_K (Q_1, Q_2) < \dist^V_{K'} (Q'_1, Q'_2) + C_0 \dist^H_K (Q_1, Q_2) + C_0. \]
\item Assume that $M$ satisfies condition (B) or (C) and consider cells $Q_1, Q_2 \subset K$ and $Q'_1, Q'_2 \subset K'$ such that $Q_1, Q'_1$ and $Q_2, Q'_2$ are adjacent.
Then
\[ \qquad\quad\; \dist^H_K (Q_1, Q_2), \; \dist^V_K(Q_1, Q_2) < C_0 \dist^H_{K'} (Q'_1, Q'_2) + C_0 \dist^V_{K'} (Q'_1, Q'_2) + C_0. \]
\item Assume that $M$ satisfies condition (B) or (C) and consider cells $Q_1, \linebreak[1] Q_2, \linebreak[1] Q_3, \linebreak[1] Q_4 \subset K$ and $Q'_1, Q'_2, Q'_3, Q'_4 \subset K'$ such that $Q_i$ and $Q'_i$ are adjacent for all $i = 1, \ldots, 4$.
Assume moreover that $\dist^H_K(Q_1, Q_2) = \dist^H_K (Q_3, Q_4)$ and $\dist^V_K(Q_1, Q_2) = \dist^V_K (Q_3, Q_4)$ in an oriented sense, i.e. the cells $Q_1, Q_2, Q_3, Q_4$ form a ``parallelogram'' along $W$.
Then
\[ \qquad\quad \big| \dist^H_{K'} (Q'_1, Q'_2) - \dist^H_{K'} (Q'_3, Q'_4) \big|, \; \big| \dist^V_{K'} (Q'_1, Q'_2) - \dist^V_{K'} (Q'_3, Q'_4) \big| < C_0. \]
\end{enumerate}
\end{Lemma}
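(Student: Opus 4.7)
The plan is to translate every statement of the lemma into affine geometry on the wall $W$. Writing each adjacent chamber as $K \cong \td\Sigma_K \times \IR$ and $K' \cong \td\Sigma_{K'} \times \IR$, the wall $W$ corresponds to $L_K \times \IR$ within $K$, where $L_K \subset \td\Sigma_K$ is a lift of a boundary circle of $\Sigma_K$, and a column $E = U \times \IR \in \mathcal{E}_K$ meets $W$ in $(U \cap L_K) \times \IR$. Since $\td{C}_K^* \cap L_K$ is discrete, $U \cap L_K$ is a bounded open interval, so $E \cap W$ is indeed an affine strip, parallel to the lift of the $S^1$-fiber of $M_K$. Doing the same from the $K'$-side yields another family of parallel strips, in the direction of the lift of the $S^1$-fiber of $M_{K'}$. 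The non-homotopy assumption in case~(C) ensures these two directions differ, so the two families of strips cross at a positive angle; in case~(B) they are parallel by construction. This already gives the first paragraph of the conclusion.

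Next I would set up affine coordinates $(x,y)$ on $W$ in which the $K$-fiber direction is the $y$-axis, so that the cells $Q \subset K$ meeting $W$ tile a region of $W$ by rectangles of bounded size. The $x$-extent of each rectangle equals the width of some $U \cap L_K$, and its $y$-extent is a unit interval between consecutive integer shifts from $\pi^{-1}(S_K)$. Because $\pi$ covers the compact manifold $M$, all of these widths and unit spacings are bounded above and below by uniform positive constants independent of $K$ and $W$. Combined with the tree property of $\mathcal{E}_K$ from Lemma~\ref{Lem:EEinKisatree}, this yields a uniform bilipschitz correspondence between the combinatorial distances $\dist^H_K, \dist^V_K$ and the affine quantities $|\Delta x|, |\Delta y|$ on $W$; an analogous dictionary holds on the $K'$-side with its own preferred axis.

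With this dictionary in place, each of (a)--(g) becomes an elementary affine-geometric statement on $W$. For (a), $Q_1 \cap W$ and $Q_2 \cap W$ both meet the uniformly bounded rectangle $Q' \cap W$, so they lie in a bounded region and hence within bounded $\dist^H_K$- and $\dist^V_K$-distance. For (b) in case~(C), $Q'_1$ and $Q'_2$ occupy a common $K'$-strip, so the displacement between their positions on $W$ is purely in the $K'$-fiber direction; as that direction forms a fixed positive angle with the $K$-fiber direction, its $K$-horizontal, $K$-vertical and $K'$-vertical components are linearly comparable, which gives the claim. For (c), the four cells form an affine parallelogram on $W$, and opposite sides agree up to the bounded cell size. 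Estimate (d) is bounded-region confinement from both sides; (e) is the analog of (b) in case~(B), where the two fiber directions coincide and the $K$-horizontal motion supplies the shift term; (f) is the triangle inequality in the affine picture; and (g) is another parallelogram computation using the oriented equality of the horizontal and vertical distances.

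The main obstacle is the second step, namely establishing the bilipschitz dictionary with uniform constants. The combinatorial $\dist^H_K$ is defined via the length of a minimal chain of columns, and one must rule out detours that would make it much larger than the affine $|\Delta x|$. Here the tree property in Lemma~\ref{Lem:EEinKisatree} is essential, since in a tree the minimal chain is unique and monotone in any linearly ordered embedding of the columns along $L_K$. Once this equivalence is in place with constants depending only on the global geometry of $M$, all seven estimates reduce to elementary planar affine geometry together with the uniform angle lower bound (in case~(C)) or the parallelism (in case~(B)).
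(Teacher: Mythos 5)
Your proposal is correct and follows essentially the same route as the paper: both introduce affine coordinates on $W$ adapted to $K$ and $K'$ respectively, identify $\dist^H_K,\dist^V_K$ (and their $K'$-analogues) with coordinate differences up to uniform error using the doubly periodic structure of the cell pattern along $W$, and deduce (a)--(g) from the fact that the change-of-coordinates matrix is invertible with $A_{12}\neq 0$ in case (C) and $A_{12}=0$, $A_{11}\neq 0$, $A_{22}=1$ in case (B). The paper derives the uniform comparability directly from double periodicity rather than via the tree property, but the content is the same.
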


\begin{proof}
Note first, that in all cases (a)--(g) the cells $Q_i$ and $Q'_i$ are adjacent to $W$ (meaning that $Q_i$ and $Q'_i$ intersect $W$).
The cells of $K$ and $K'$ that are adjacent to $W$ are arranged in a doubly periodic pattern along $W$.
So we can introduce euclidean coordinates $(x_1, x_2) : W \to \IR^2$ such that for any two cells $Q_1, Q_2 \subset K$ that are adjacent to $W$ and any points $p_1 \in Q_1 \cap W$, $p_2 \in Q_2 \cap W$ we have $|\dist^H_K (Q_1, Q_2) - |x_1(p_1) - x_1(p_2)| | < C$ and $|\dist^V_K (Q_1, Q_2) - |x_2(p_1)-x_2(p_2)| | < C$ for some uniform constant $C$.
Similarly, we can find Euclidean coordinates $(x'_1, x'_2) : W \to \IR^2$ with the analogous behavior for the cells of $K'$ that are adjacent to $W$ such that the origins of $(x_1, x_2)$ and $(x'_1, x'_2)$ agree.
The transformation matrix $A \in \IR^{2 \times 2}$ with $A (x_1, x_2) = (x'_1, x'_2)$ is invertible.
In case (C) we have $A_{12} \neq 0$ and in case (B) we have $A_{12} = 0, A_{11} \neq 0$ and $A_{22} = 1$.
All assertions of the Lemma now follow from the corresponding statements for these two coordinate systems.
\end{proof}

Next, consider a smooth arc $\gamma : [0,1] \to \td{M}$.

\begin{Definition}[general position]
We say that $\gamma$ is in \emph{general position} if its endpoints $\gamma(0), \gamma(1) \not\in \td{V}$ and if $\gamma$ intersects $\td{V}$ transversally and only in $\td{V} \setminus \td{V}^{(1)}$.
If $Q_1, Q_2 \in \mathcal{Q}$ are two cells with $\gamma(0) \in Q_1$ and $\gamma(1) \in Q_2$, then we say that $\gamma$ \emph{connects $Q_1$ with $Q_2$}.
\end{Definition}

Let $\eta, H > 0$ be constants whose value will be determined later in subsection \ref{subsec:proofofeasiermaincombinatorialresult}.
In the course of the following three subsections, we will need to assume that $\eta$ is small enough and $H$ is large enough to make certain arguments work out.

\begin{Definition}[length and distance]
The \emph{(combinatorial) length} $|\gamma|$ of an arc $\gamma : [0,1] \to \td{M}$ in general position is defined as
\begin{multline*}
 |\gamma| = \eta \; \big(\textnormal{number of intersections of $\gamma$ with $\pi^{-1} (S_1 \cup \ldots \cup S_k)$} \big) \\
 + H \; \big(\textnormal{number of intersections of $\gamma$ with $\pi^{-1} (T_1 \cup \ldots \cup T_m)$} \big) \\
  + \big(\textnormal{number of intersections of $\gamma$ with $\pi^{-1} (C_1 \cup \ldots \cup C_k)$} \big) 
\end{multline*}
The \emph{(combinatorial) distance} $\dist (Q_1, Q_2)$ between two cells $Q_1, Q_2 \in \mathcal{Q}$ is the minimal combinatorial length of all arcs in general position between $Q_1$ and $Q_2$.
An arc $\gamma : [0,1] \to \td{M}$ in general position is said to be \emph{(combinatorially) minimizing} if its length is equal to the combinatorial distance between the two cells that contain its endpoints.
\end{Definition}

Observe that $(\mathcal{Q}, \dist)$ is a metric space.
On a side note, it is an interesting ``coincidence'' that this metric space approximates the conjectured geometric behavior of the Ricci flow metric $t^{-1} g_t$ lifted to the universal cover $\td{M}$.

Our main characterization of combinatorially minimizing arcs will be stated in Proposition \ref{Prop:characterizationminimizinggamma} in case (C) and in Proposition \ref{Prop:characteriztionminimizinggammaCaseB} in case (B).
We conclude this subsection by pointing out three basic properties of combinatorially minimizing arcs.

\begin{Lemma} \label{Lem:subsegmentofminimizing}
If $\gamma : [0,1] \to \td{M}$ is combinatorially minimizing, then so is every orientation preserving or reversing reparameterization and every subsegment of $\gamma$ whose endpoints don't lie in $\td{V}$.
\end{Lemma}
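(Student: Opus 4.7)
The statement is really a basic consistency property of the combinatorial distance, and both parts follow by exhibiting competitor curves. I will treat reparameterizations first and then subsegments.

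For the reparameterization claim, observe that the combinatorial length $|\gamma|$ depends only on the three intersection counts of $\gamma$ with $\pi^{-1}(S_1\cup\ldots\cup S_k)$, $\pi^{-1}(T_1\cup\ldots\cup T_m)$ and $\pi^{-1}(C_1\cup\ldots\cup C_k)$ respectively. These counts are manifestly invariant under any diffeomorphic reparameterization of the interval (orientation-preserving or reversing), and general position is preserved as well. An orientation-preserving reparameterization yields a curve between the same two cells $Q_1,Q_2 \in \mathcal{Q}$, while a reversing reparameterization yields one between $Q_2$ and $Q_1$. Since $\dist(Q_1,Q_2)=\dist(Q_2,Q_1)$ (any competitor can itself be reversed), the reparameterized curve realizes the combinatorial distance and is therefore again combinatorially minimizing.

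For the subsegment claim, let $0 \leq a < b \leq 1$ with $\gamma(a),\gamma(b)\notin\td{V}$, and let $Q'_1,Q'_2\in\mathcal{Q}$ be the (possibly equal) cells containing these two points. Suppose for contradiction that $\gamma|_{[a,b]}$ is not combinatorially minimizing between $Q'_1$ and $Q'_2$; then there exists a smooth curve $\delta:[0,1]\to\td{M}$ in general position with $\delta(0)\in\Int Q'_1$, $\delta(1)\in\Int Q'_2$ and $|\delta| < |\gamma|_{[a,b]}|$. The plan is to splice $\delta$ into $\gamma$ in place of $\gamma|_{[a,b]}$, yielding a shorter curve with the same endpoint cells as $\gamma$, contradicting minimality.

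To carry out the splicing, pick smooth arcs $\alpha_1\subset\Int Q'_1$ from $\gamma(a)$ to $\delta(0)$ and $\alpha_2\subset\Int Q'_2$ from $\delta(1)$ to $\gamma(b)$; these exist because interiors of cells are path-connected open sets disjoint from $\td{V}$. The concatenation $\gamma|_{[0,a]}\cdot\alpha_1\cdot\delta\cdot\alpha_2\cdot\gamma|_{[b,1]}$ can be reparameterized on $[0,1]$ and, after a small smoothing inside the cell interiors $\Int Q'_1$, $\Int Q'_2$ at the four junction points (which does not alter any intersection with $\td{V}$), produces a smooth curve $\td\gamma$ in general position with the same endpoints as $\gamma$. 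Because $\alpha_1,\alpha_2$ are disjoint from $\td{V}$, they contribute $0$ to the combinatorial length, so
\[ |\td\gamma| = |\gamma|_{[0,a]}| + |\delta| + |\gamma|_{[b,1]}| < |\gamma|_{[0,a]}| + |\gamma|_{[a,b]}| + |\gamma|_{[b,1]}| = |\gamma|, \]
contradicting the assumption that $\gamma$ was combinatorially minimizing. There is no real obstacle here; the only point requiring minor care is producing a genuinely smooth splice, handled above by the standard trick of smoothing inside cell interiors where no $\td{V}$-intersections can be created or destroyed.
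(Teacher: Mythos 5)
Your proof is correct and takes the only natural approach — reparameterization-invariance of the intersection counts plus a splice-and-contradict argument for subsegments — which is precisely what the paper has in mind; the paper's own proof is simply the single word ``Obvious.'' You have written out carefully what the author treats as immediate, and the details (path-connectedness of cell interiors, smoothing the junctions away from $\td{V}$, additivity of the length functional over concatenation) all check out.
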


\begin{proof}
Obvious.
\end{proof}

\begin{Lemma}
For any cell $Q \in \mathcal{Q}$, the preimage $\gamma^{-1} (Q)$ under a combinatorially minimizing arc $\gamma : [0,1] \to \td{M}$ is a closed interval, i.e. $\gamma$ does not reenter $Q$ after exiting it.
\end{Lemma}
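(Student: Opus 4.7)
The plan is to argue by contradiction via a standard short-cut construction: I will suppose $\gamma^{-1}(Q)$ is not a single closed interval, and then build a new curve with the same endpoints but strictly smaller combinatorial length, violating minimality.

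First, if $\gamma^{-1}(Q)$ fails to be connected, I can pick times $0 \leq t_1 < t_2 < t_3 \leq 1$ with $\gamma(t_1), \gamma(t_3) \in Q$ and $\gamma(t_2) \notin Q$. After a small inward nudge of $t_1$ and $t_3$ --- justified by the fact that $\gamma$ meets $\td V$ only transversally and hence in an isolated set of parameters --- I may assume $\gamma(t_1), \gamma(t_3) \in \Int Q$. On $[t_1, t_3]$ the curve must cross $\partial Q \subset \td V$ at least twice (once leaving $Q$, once returning), and by the general position assumption each such crossing occurs in $\td V \setminus \td V^{(1)}$ and contributes at least $\min(\eta, H, 1) > 0$ to the combinatorial length. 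In particular $\bigl|\gamma|_{[t_1,t_3]}\bigr| > 0$.

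Next, I would exploit that $\Int Q$ is path-connected (being a component of $\td M \setminus \td V$) to choose a smooth curve $\gamma' : [t_1, t_3] \to \Int Q$ from $\gamma(t_1)$ to $\gamma(t_3)$, matching $\gamma$ smoothly at the two join times. Since $\gamma'$ avoids $\td V$ entirely, its combinatorial length is $0$. Splicing in $\gamma'$ produces a curve $\gamma_{\mathrm{new}} = \gamma|_{[0,t_1]} \ast \gamma' \ast \gamma|_{[t_3,1]}$ which is still in general position, still connects the same two cells as $\gamma$, and satisfies $|\gamma_{\mathrm{new}}| = |\gamma| - \bigl|\gamma|_{[t_1,t_3]}\bigr| < |\gamma|$, contradicting combinatorial minimality.

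I do not expect any serious obstacle: the only subtlety is maintaining general position after the splice, and this is automatic because the modification takes place strictly inside $\Int Q$, i.e.\ disjoint from $\td V$, so no new intersections with $\td V$ are introduced.
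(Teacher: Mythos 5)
Your proposal is correct and is exactly the shortcut argument the paper has in mind; the paper's own proof is simply the one-line remark ``Otherwise we could replace $\gamma$ by a shorter curve,'' and you have spelled out the details (the two forced crossings of $\partial Q$, the splice through $\Int Q$, and the preservation of general position) accurately.
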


\begin{proof}
Otherwise we could replace $\gamma$ by a shorter arc.
\end{proof}

\begin{Lemma} \label{Lem:gammainsidesingleK}
Assume that $\gamma : [0,1] \to \td{M}$ is combinatorially minimizing and stays within some chamber $K \in \mathcal{K}$.
Let $E_0, \ldots, E_n \in \mathcal{E}_K$ be the columns that $\gamma$ intersects in that order.
Then $(E_0, \ldots, E_n)$ is a minimal chain in $\mathcal{E}_K$.

Moreover, $\gamma$ intersects each component of $\pi^{-1} (S_1 \cup \ldots \cup S_k)$ at most once.
So if the endpoints of $\gamma$ lie in cells $Q_1, Q_2 \in \mathcal{Q}$, then
\[ |\gamma|  = \dist (Q_1, Q_2) = \dist^H_K (Q_1,Q_2) + \eta \dist^V_K(Q_1, Q_2). \]
Finally, for any two cells $Q_1, Q_2 \subset K$ we have $\dist(Q_1, Q_2) \leq \dist^H_K (Q_1, Q_2) + \eta \dist^V_K (Q_1, Q_2)$.
\end{Lemma}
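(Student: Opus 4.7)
The plan is to derive all three assertions from shortcut arguments: whenever a given structural property of $\gamma$ fails, I will produce a strictly shorter curve with the same endpoints that still lies in $K$, contradicting the combinatorial minimality of $\gamma$. Since $\gamma\subset K$ is in general position, it does not meet $\partial K$ at all, so its combinatorial length only collects contributions from transverse crossings with $\pi^{-1}(S_K)\cap K$ (of weight $\eta$ each) and with $\pi^{-1}(C_K)\cap K$ (of weight $1$ each).

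For the first assertion I will use Lemma~\ref{Lem:EEinKisatree}: since $\mathcal{E}_K$ is a tree, a chain is minimal exactly when no column is repeated. If a column is repeated in $(E_0,\ldots,E_n)$, I pick indices $i<j$ with $E_i=E_j$ and $j-i$ minimal; distinctness of consecutive columns forces $j-i\geq 2$. Let $Q^-,Q^+\subset E_i$ be the cells of $\gamma$ immediately before the transition $E_i\to E_{i+1}$ and immediately after the transition $E_{j-1}\to E_j$, respectively. The corresponding subsegment of $\gamma$ crosses $\pi^{-1}(C_K)$ exactly $j-i\geq 2$ times and crosses $\pi^{-1}(S_K)$ at least $\dist^V_K(Q^-,Q^+)$ times (intersection-number argument on each separating component). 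Replacing the subsegment by a purely vertical curve in $E_i$ from $Q^-$ to $Q^+$ eliminates every horizontal crossing on this piece and realizes exactly $\dist^V_K(Q^-,Q^+)$ vertical crossings, so $|\gamma|$ drops by at least $j-i\geq 2$ --- a contradiction.

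For the second assertion I will suppose that some component $S^*\subset\pi^{-1}(S_K)$ is crossed at least twice by $\gamma$ and pick two consecutive such crossings $p<p'$. Because $S^*$ separates $K$, the subsegment $\gamma|_{[p,p']}$ stays on one side of $S^*$, so the cells $Q_-,Q'_-$ containing $\gamma$ immediately before $p$ and immediately after $p'$ both lie in the single horizontal slice of $K$ on the opposite side of $S^*$; in particular $\dist^V_K(Q_-,Q'_-)=0$. On this piece the old subsegment contributes at least $2\eta+\dist^H_K(Q_-,Q'_-)$ to $|\gamma|$, while a curve moving purely horizontally inside that slice from $Q_-$ to $Q'_-$ (then perturbed into general position) has cost exactly $\dist^H_K(Q_-,Q'_-)$, yielding a saving of at least $2\eta>0$. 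The main subtlety will be verifying that $Q_-$ and $Q'_-$ indeed lie in a common slice, which I will read off from the product description \eqref{eq:arrangementQinK}: the cells adjacent to $S^*$ on a fixed side form a single copy of $\td\Sigma_K\setminus\td{C}^*_K$.

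With these two shortcut results in hand, the identity $|\gamma|=\dist^H_K(Q_1,Q_2)+\eta\,\dist^V_K(Q_1,Q_2)$ will follow by a count: minimality of the chain forces exactly $n=\dist^H_K(Q_1,Q_2)$ crossings of $\pi^{-1}(C_K)$, while ``at most one crossing per component'' combined with a parity/intersection-number argument shows that the components of $\pi^{-1}(S_K)$ crossed by $\gamma$ are precisely the $\dist^V_K(Q_1,Q_2)$ components separating $Q_1$ from $Q_2$; minimality of $\gamma$ then yields $\dist(Q_1,Q_2)=|\gamma|$. The final upper bound for arbitrary cells $Q_1,Q_2\subset K$ I will obtain by an explicit construction: concatenate a horizontal traversal inside a single slice following a minimal chain in $\mathcal{E}_K$ from the column of $Q_1$ to that of $Q_2$ with a purely vertical segment in the column of $Q_2$, then perturb into general position; the resulting curve stays in $K$ and has combinatorial length exactly $\dist^H_K(Q_1,Q_2)+\eta\,\dist^V_K(Q_1,Q_2)$.
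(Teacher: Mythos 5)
Your proposal is correct and works out in full detail the shortcut arguments that the paper leaves implicit by writing only ``this follows from the cell structure of $K$ (see (\ref{eq:arrangementQinK}))''. The approach is essentially the same as intended: you read off from the product identification $\bigcup_{Q\subset K}\Int Q = (\Int\td\Sigma_K\setminus\td C^*_K)\times(\IR\setminus\IZ)$ that repeated columns or repeated $S_K$-crossings can be shortcut (horizontally within a column or within a slice), invoke the tree structure of $\mathcal{E}_K$ from Lemma~\ref{Lem:EEinKistree}, and then count crossings to get the distance formula and the upper bound.
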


\begin{proof}
This follows from the cell structure of $K$ (see also (\ref{eq:arrangementQinK})).
\end{proof}

\subsection{A combinatorial convexity estimate if $M$ satisfies condition (C)} \label{subsec:combconvexincaseC}
In this subsection we assume that $M$ satisfies condition (C) in Proposition \ref{Prop:easiermaincombinatorialresultCaseb}.
We will analyze the combinatorial distance function on $\mathcal{Q}$ in this case.
The main result in this section will be the combinatorial convexity estimate in Proposition \ref{Prop:combinatorialconvexityseveralSeifert}.

\begin{Lemma} \label{Lem:gammainsidetwoKKs}
There are constants $\eta^* > 0$ and $H^* < \infty$ such that if $\eta \leq \eta^*$ and $H \geq H^*$, the following holds:

Consider two chambers $K, K' \in \mathcal{K}$ that are adjacent to a common wall $W = K \cap K'$ from either side and assume that $\gamma : [0,1] \to K \cup K'$ is combinatorially minimizing.
Then $\gamma$ intersects $W$ at most twice.
\begin{enumerate}[label=(\alph*)]
\item If $\gamma$ intersects $W$ exactly once, then there is a unique column $E \in \mathcal{E}_K$ in $K$ that is both adjacent to $W$ and that intersects $\gamma$.
The same is true in $K'$.
\item If $\gamma$ intersects $W$ exactly twice and $\gamma(0), \gamma(1) \in K$, then there is a unique column $E' \in \mathcal{E}_{K'}$ such that $\gamma$ is contained in $K \cup E'$.
Moreover there are exactly two columns $E_1, E_2 \in \mathcal{E}_K$ that are adjacent to $W$ and that intersect $\gamma$.
And we have $\dist^H_K (E_1, E_2) > H$.
\item If $\gamma$ does not intersect $W$, but intersects two columns $E_1, E_2 \in \mathcal{E}_K$ that are both adjacent to $W$, then $\dist^H_K (E_1, E_2) < 3H$.
\end{enumerate}
\end{Lemma}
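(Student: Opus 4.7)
The plan is to exploit the large cost $H$ of each wall crossing against the small cost $\eta$ of each vertical step within a chamber, together with the transversality of the $K$- and $K'$-strips on $W$ that Lemma~\ref{Lem:QQestimatesatW} provides in case~(C). Transversality implies that any column of $\mathcal{E}_{K'}$ adjacent to $W$ meets any column of $\mathcal{E}_K$ adjacent to $W$; in particular, for any two columns of $\mathcal{E}_K$ adjacent to $W$ there is a common shortcut column $E' \in \mathcal{E}_{K'}$ through which one can reroute $\gamma$ using two wall crossings and purely vertical motion. The vertical cost of this reroute can then be controlled by Lemma~\ref{Lem:QQestimatesatW}(b). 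Whenever $\gamma$ is ``too complicated'' in one of the senses forbidden by the statement, such a reroute becomes strictly shorter than $\gamma$, contradicting minimality, provided $\eta$ is small enough and $H$ is large enough.

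I would first prove part~(c), as it is the template. Pick cells $Q_1 \subset E_1$, $Q_2 \subset E_2$ on $\gamma$; by Lemmas~\ref{Lem:subsegmentofminimizing} and~\ref{Lem:gammainsidesingleK}, the subsegment of $\gamma$ joining them is minimizing of length exactly $\dist^H_K(E_1,E_2) + \eta\,\dist^V_K(Q_1,Q_2)$. The shortcut $Q_1 \to Q'_1 \to Q'_2 \to Q_2$ through a single column $E' \in \mathcal{E}_{K'}$ has length at most $2H + \eta\,\dist^V_{K'}(Q'_1,Q'_2)$ plus bounded vertical corrections inside $E_1$ and $E_2$. Since $Q'_1, Q'_2$ can be arranged to lie in the same column $E'$ and hence be vertically aligned in $K'$, Lemma~\ref{Lem:QQestimatesatW}(b) gives $\dist^V_{K'}(Q'_1, Q'_2) < C_0\,\dist^H_K(E_1, E_2) + C_0$. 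Minimality then yields $(1-\eta C_0)\,\dist^H_K(E_1, E_2) \leq 2H + O(\eta)$, which is $< 3H$ once $\eta^*$ is chosen small and $H^*$ large enough.

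The ``at most twice'' claim, part~(a), and the $m = 0$ statement of part~(b) follow by the same reroute, now used to expose a contradiction rather than to obtain an estimate. If $\gamma$ crossed $W$ at least three times, one of its intermediate $K$- or $K'$-subsegments, together with the two bracketing crossings, could be replaced by a vertical traversal inside a common shortcut column; part~(c) bounds the horizontal cost of that subsegment by $3H$, and Lemma~\ref{Lem:QQestimatesatW}(b) then bounds the vertical cost of the reroute by $\eta \cdot O(H)$, which for small enough $\eta$ is strictly less than the saved $2H$. For part~(a), if $\gamma$ crossed $W$ once but visited two distinct columns $E_1, E_\ast \in \mathcal{E}_K$ adjacent to $W$, one ``moves the crossing'' from $E_\ast$ to $E_1$ through a common shortcut column, saving the horizontal cost $\dist^H_K(E_1, E_\ast) \geq 1$ at an $O(\eta)$ vertical cost. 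For part~(b), after part~(c) already gives $m < 3H$, the same reroute eliminates the $K'$-horizontal cost $m \geq 1$, forcing $m = 0$. The inequality $\dist^H_K(E_1, E_2) > H$ then comes from comparing $|\gamma| \geq 2H$ against the ``all-in-$K$'' alternative of length $\dist^H_K(E_1, E_2) + O(\eta)\,\dist^H_K(E_1, E_2)$.

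The main obstacle is bookkeeping: each reroute must be built so that its vertical estimate comes from Lemma~\ref{Lem:QQestimatesatW}(b) with a pair $Q'_1, Q'_2$ that is genuinely vertically aligned in $K'$, rather than from the weaker estimate~(f), which loses a factor of $C_0$ on the vertical direction and would prevent the smallness of $\eta$ from absorbing it. Transversality in case~(C) is precisely what guarantees the existence of the required shortcut columns and the alignment of their cells; in case~(B) the $K$- and $K'$-strips on $W$ are parallel and no such shortcut is available, which is why that case will need its own treatment in the following subsection.
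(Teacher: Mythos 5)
The overall strategy — exploit the transversality of the $K$- and $K'$-strips on $W$ to build shortcut columns, and compare $\gamma$ against reroutes whose cost is controlled by Lemma~\ref{Lem:QQestimatesatW}(b) — is the right one, and your treatment of parts~(a) and~(c) matches the paper's argument. For~(c) you observe that $\gamma$ must stay in $K$, so Lemma~\ref{Lem:gammainsidesingleK} applies, and you reroute through a single column of $K'$; for~(a) you ``move'' the one crossing from $E_\ast$ to $E_1$ at cost $H + \eta\bigl(C_0\dist^H_K(E_1,E_\ast)+C_0\bigr)$, which beats the saved $\dist^H_K(E_1,E_\ast)+H$ once $\eta C_0 < 1/2$. (Your phrase ``$O(\eta)$ vertical cost'' glosses over the fact that the vertical cost scales with $\dist^H_K(E_1,E_\ast)$, but the comparison goes through precisely because of the $\eta C_0$ factor.) These are the same calculations as in the paper's proof.

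The gap is in the ``at most twice'' claim, and it is a real one. Your reroute removes the intermediate $K'$-excursion together with the two bracketing crossings and replaces it by a traversal inside $K$. But by part~(b), which you invoke for subsegments, the entry and exit columns $E_1, E_3 \in \mathcal{E}_K$ of that excursion satisfy $\dist^H_K(E_1,E_3) > H$, so the replacement incurs a horizontal cost in $K$ that eats most or all of the saved $2H$ — the reroute through $K$ is not shorter. The paper's reroute for the three-crossing case is different: with $\gamma(0)\in K$, $\gamma(1)\in K'$ and crossings at $E_1\!\to\!E'_2$, $E'_2\!\to\!E_3$, $E_3\!\to\!E'_4$, one reroutes from the cell $Q_1 \subset E_1$ \emph{vertically within $E_1$} to a cell $Q^*$ adjacent to $E'_4$, crosses once, and then moves vertically within $E'_4$ to $Q'_3$ — replacing three crossings by one without any horizontal motion. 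The crucial estimate for the vertical cost of this reroute is the parallelogram estimate Lemma~\ref{Lem:QQestimatesatW}(c), which bounds $\dist^V_K(Q_1, Q^*)$ in terms of $\dist^V_K(Q_2, Q_3)$ (and similarly on the $K'$ side), yielding a cost that is $\dist(Q_1,Q'_3) - 2H + 2\eta C_0$. You never invoke Lemma~\ref{Lem:QQestimatesatW}(c); Lemma~\ref{Lem:QQestimatesatW}(b) alone cannot relate the vertical displacement along $E_1$ between two crossings to the vertical displacement along $E_3$, which is what makes the one-crossing shortcut affordable. Relatedly, the paper proves the uniqueness of $E'$ in~(b) by applying~(a) to the two sub-arcs of $\gamma$ that cross $W$ exactly once, rather than by your ``$m < 3H$ then reroute to $m=0$'' argument; the latter would again run into the same difficulty that $\dist^H_K(E_1,E_2)$ is large before $m=0$ is known. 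You should prove~(a) first, derive the uniqueness of $E'$ from it, and use Lemma~\ref{Lem:QQestimatesatW}(c) to dispose of the three-crossing case.
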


\begin{proof}
We first establish assertion (a).
Assume without loss of generality that $\gamma(0) \in K$ and $\gamma(1) \in K'$.
Let $Q \subset K$ be the last cell that $\gamma$ intersects inside $K$ and $Q' \subset K'$ the first cell in $K'$.
So $Q, Q'$ are adjacent.
Let $E \in \mathcal{E}_K$ be the column that contains $Q$ and $E'$ the column that contains $Q'$.
Assume that contrary to the assertion there is another column $E_1 \neq E \in \mathcal{E}_K$ that is adjacent to $W$ and intersects $\gamma$.
Choose a cell $Q_1 \subset E_1$ that intersects $\gamma$.
Then by Lemma \ref{Lem:gammainsidesingleK}
\[ \dist(Q_1, Q') = \dist(Q_1, Q) + H = \dist^H_K (E_1, E) + \eta \dist^V_K (Q_1, Q) + H. \]
Let $Q_2 \subset E_1$ be the cell that is horizontally aligned with $Q$.
Then, by the triangle inequality and Lemma \ref{Lem:gammainsidesingleK},
\[ \dist(Q_2, Q') \leq \dist(Q_2, Q) + \dist(Q, Q') \leq \dist_K^H (E_1, E) + H. \]
On the other hand, again by the triangle inequality and Lemma \ref{Lem:gammainsidesingleK},
\begin{multline*}
 \dist(Q_2, Q') + \eta \dist_K^V( Q_1, Q) \geq \dist(Q_2, Q') + \dist(Q_1, Q_2) \\
 \geq \dist(Q_1, Q') = \dist^H_K (E_1, E) + \eta \dist^V_K (Q_1, Q) + H 
\end{multline*}
It follows that
\[ \dist(Q_2, Q') = \dist^H_K (E_1, E) + H. \]
Since $E_1 \cap W$ and $E' \cap W$ are non-parallel strips in $W$, we can find cells $Q_3 \subset E_1$ and $Q'_3 \subset E'$ that are adjacent to each other and by Lemma \ref{Lem:QQestimatesatW}(b) we can estimate
\[ \dist^V_K (Q_2, Q_3) = \dist^V_K (Q, Q_3), \; \dist^V_{K'} (Q', Q'_3) < C_0 \dist^H_K (E_1, E) + C_0. \]
We then conclude
\begin{multline*}
 \dist^H_K (E_1, E) + H = \dist(Q_2, Q') \leq \dist(Q_2, Q_3) + \dist(Q_3, Q'_3) + \dist(Q'_3, Q') \\
 < 2 \eta \big( C_0 \dist^H_K (E_1, E) + C_0 \big) + H.
\end{multline*}
For $\eta < (4 C_0)^{-1}$ this implies $\dist^H_K (E_1, E) < 1$ and hence $E_1 = E$.

Next, we show assertion (b).
Let $E_1, E_2 \in \mathcal{E}_K$ be the the columns that $\gamma$ intersects right before intersecting $W$ for the first time and right after intersecting $W$ for the second time.
Let $E'_1, E'_2 \in \mathcal{E}_{K'}$ be the first and last columns that $\gamma$ intersects inside $K'$.
Assertion (a) applied to the subsegments of $\gamma$ between $\gamma(0)$ and $E'_2$ and between $E'_1$ and $\gamma(1)$, yields that $E' := E'_1 = E'_2$.
Since the subsegment of $\gamma$ that is contained in $K'$ has both of its endpoints in $E'$, it has to be fully contained in it.
Moreover, assertion (a) implies that there are no other columns than $E_1, E_2$ in $K$ that are adjacent to $W$ and intersect $\gamma$.

It remains to show the inequality on the horizontal distance between $E_1, E_2$.
We will do this by comparing the intrinsic and extrinsic distance between these two columns.
Choose $Q_1 \subset E_1$ and $Q'_1 \subset E'$ such that $\gamma$ crosses $W$ between $Q_1$ and $Q'_1$ for the first time and pick $Q_2 \subset E_2$ and $Q'_2 \subset E'$ accordingly.
So $Q_1, Q'_1$ and $Q_2, Q'_2$ are adjacent.
Lemma \ref{Lem:QQestimatesatW}(b) provides the bound $\dist^V_K (Q_1, Q_2) < C_0 \dist^H_K (Q_1, Q_2) + C_0$.
So
\begin{multline*}
 2H \leq \dist(Q_1, Q_2) \leq \dist^H_K (Q_1, Q_2) + \eta \dist^V_K (Q_1, Q_2) \\
  < (1 + \eta C_0) \dist^H_K (Q_1, Q_2) + \eta C_0 .
\end{multline*}
The desired inequality follows for $\eta < (2C_0)^{-1}$ and $H >2$.
This finishes the proof of assertion (b).

We can now show that $\gamma$ intersects $W$ at most twice.
Assume not.
After passing to a subsegment and possibly reversing the orientation, we may assume that $\gamma$ intersects $W$ exactly three times and that $\gamma(0) \in K$, $\gamma(1) \in K'$.
By assertion (b) applied to subsegments of $\gamma$ which intersects $W$ exactly twice, we find that there are columns $E_1, E_3 \in \mathcal{E}_K$ and $E'_2, E'_4 \in \mathcal{E}_{K'}$, which are all adjacent to $W$ such that $\gamma$ crosses $W$ first between $E_1$ and $E'_2$, then between $E'_2$ and $E_3$ and finally between $E_3$ and $E'_4$.
Choose cells $Q_1 \subset E_1$, $Q'_1, Q'_2 \subset E'_2$, $Q_2, Q_3 \subset E_3$, $Q'_3 \subset E'_4$ such that $\gamma$ crosses $W$ first between $Q_1$ and $Q'_1$, then between $Q'_2$ and $Q_2$ and finally between $Q_3$ and $Q'_3$.
So
\begin{multline*}
 \dist(Q_1, Q'_3) = \dist(Q_1, Q'_1) + \dist(Q'_1, Q'_2) + \dist(Q'_2, Q_2) \\ + \dist(Q_2, Q_3) + \dist(Q_3, Q'_3) = 3H + \eta \dist^V_{K'} (Q'_1, Q'_2) + \eta \dist^V_K (Q_2, Q_3).
\end{multline*}
Since $E_1 \cap W$ and $E'_4 \cap W$ are non-parallel strips in $W$, we can find cells $Q^* \subset E_1$ and $Q^{*\prime} \subset E'_4$ that are adjacent to each other.
By Lemma \ref{Lem:QQestimatesatW}(c)
\begin{multline*}
 \dist^V_K (Q_1, Q^*) < \dist^V_K (Q_2, Q_3) + C_0 \qquad \text{and} \\ \quad
\dist^V_{K'} (Q^{*\prime}, Q'_3) < \dist^V_{K'} (Q'_1, Q'_2) + C_0.
\end{multline*}
So
\begin{multline*}
\dist (Q_1, Q'_3) \leq \dist(Q_1, Q^*) + \dist(Q^*, Q^{* \prime}) + \dist(Q^{* \prime}, Q'_3) \\
< \eta \dist^V_K (Q_2, Q_3) + \eta C_0 + H + \eta \dist^V_{K'} (Q'_1, Q'_2) + \eta C_0 \\
= \dist(Q_1, Q'_3) - 2H + 2 \eta C_0.
\end{multline*}
We obtain a contradiction for $\eta C_0 < H$.

Finally, we show assertion (c).
Assume now that $\gamma$ does not intersect $W$ and choose cells $Q_1 \subset E_1$ and $Q_2 \subset E_2$ that intersect $\gamma$.
Since $\gamma$ stays within $K$ we have
\[ \dist(Q_1, Q_2) = \dist^H_K (E_1, E_2) + \eta \dist^V_K (Q_1, Q_2). \]
Let $Q_3 \subset E_2$ be the cell that is horizontally aligned with $Q_1$.
By the triangle inequality and by previous equation
\[ \dist(Q_1, Q_3) = \dist^H_K (E_1, E_2). \]
Let $Q'_1 \subset K'$ be a cell that is adjacent to $Q_1$ and let $E' \in \mathcal{E}_{K'}$ be the column that contains $Q'_1$.
Since $E' \cap W$ and $E_2 \cap W$ are non-parallel strips, we can find cells $Q'_2 \subset E'$ and $Q''_2 \subset E_2$ that are adjacent.
By Lemma \ref{Lem:QQestimatesatW}(b), we have $\dist^V_K(Q''_2, Q_3), \dist^V_{K'} (Q'_1, Q'_2) < C_0 \dist^H_K (E_1, E_2) + C_0$.
So
\begin{multline*}
 \dist^H_K (E_1, E_2) = \dist(Q_1, Q_3) \leq \dist(Q_1, Q'_1) + \dist(Q'_1, Q'_2) \\+ \dist(Q'_2, Q''_2) + \dist(Q''_2, Q_3) 
 < 2 H + 2 \eta C_0 \dist^H_K (E_1, E_2) + 2 \eta C_0.
\end{multline*}
The desired inequality follows for $2 \eta C_0 < \frac1{10}$ and $H > 1$.
\end{proof}

The next Proposition provides an accurate characterization of the behavior of a minimizing arc.

\begin{Proposition} \label{Prop:characterizationminimizinggamma}
Assume that $M$ satisfies condition (C).
There are constants $\eta^* > 0$ and $H^* < \infty$ such that if $\eta \leq \eta^*$ and $H \geq H^*$, the following holds:

Consider a combinatorially minimizing arc $\gamma : [0,1] \to \td{M}$.
Then
\begin{enumerate}[label=(\alph*)]
\item For every chamber $K \in \mathcal{K}$ and every column $E \in \mathcal{E}_K$, the preimage $\gamma^{-1} (E)$ is a connected interval, i.e. $\gamma$ does not exit and reenter $E$.
\item $\gamma$ intersects every wall $W \in \mathcal{W}$ at most twice.
Assume that $K, K' \in \mathcal{K}$ are two chambers that are adjacent to a wall $W \in \mathcal{W}$ from either side.
Then
\begin{enumerate}[label=(b\arabic*)]
\item If $\gamma$ intersects $W$ exactly once, then there is a unique column $E \in \mathcal{E}_K$ that is both adjacent to $W$ and intersects $\gamma$.
Moreover, for every column $E^* \in \mathcal{E}_K$ that also intersects $\gamma$, the minimal chain between $E$ and $E^*$ intersects $W$ in at most two columns.
\item If $\gamma$ intersects $W$ twice and its endpoints lie on the same side of $W$ as $K$, then within both intersections it stays inside a column $E' \in \mathcal{E}_{K'}$ adjacent to $W$.
Moreover, there are exactly two columns $E_1, E_2 \in \mathcal{E}_K$ that intersect $\gamma$ in this order and that are adjacent to $W$ and we have $\dist^H_K (E_1, E_2) > H$.
The arc $\gamma$ leaves $K$ through $W$ right after $E_1$ and reenters $K$ through $W$ right before $E_2$.
\item If $\gamma$ does not intersect $W$, but intersects two columns $E_1, E_2 \in \mathcal{E}_K$ that are both adjacent to $W$, then $\dist^H_K (E_1, E_2) < 3H$.
\end{enumerate}

\item Consider a chamber $K \in \mathcal{K}$ and let $E_1, \ldots, E_n \in \mathcal{E}_K$ be the columns of $K$ that $\gamma$ intersects in that order.
Then there are columns $E^*_1, \ldots, E^*_k \in \mathcal{E}_K$ such that:
\begin{enumerate}[label=(c\arabic*)]
\item $E^*_1 = E_1$ and $E^*_n = E_n$
\item $\dist^H_K (E^*_i, E_i) \leq 1$ for all $i = 1, \ldots, n$.
\item $E^*_1, \ldots, E^*_n$ are pairwise distinct and lie on the minimal chain between $E_1$ and $E_n$ in that order.
\item If $E^*_i \neq E_i$, then there are two walls $W, W' \subset \partial K$ that both intersect $\gamma$ twice such that $\gamma$ exits $K$ through $W'$ right after $E_{i-1}$, enters $K$ through $W'$ right before $E_i$, exits $K$ through $W$ right after $E_i$ and enters $K$ through $W$ right before $E_{i+1}$.
In particular $E_i$ does not lie on the minimal chain between $E_1, E_n$ and $E_i$ is not adjacent to $E_{i-1}$ or $E_{i+1}$.
\item If $E_i, E_{i+1}$ are adjacent, then $\gamma$ stays within $E_i \cup E_{i+1}$ between $E_i$ and $E_{i+1}$.
\item If $E_i, E_{i+1}$ are not adjacent, then there is a wall $W \subset \partial K$ such that $\gamma$ exits $K$ through $W$ right after $E_i$ and enters $K$ through $W$ right before $E_{i+1}$.
The columns $E_i, E^*_i, E^*_{i+1}, E_{i+1}$ lie in that order (some of these columns might be the same) on a minimal chain that runs along $W$.
\item If $i_1 < i_2$ and $E_{i_1}, E_{i_2}$ are adjacent to a common wall $W \subset \partial K$, then either $(E_{i_1}, \ldots, E_{i_2})$ form a minimal chain or $i_2 = i_1 + 1$ and $\gamma$ intersects $W$ right after $E_{i_1}$ and right before $E_{i_2}$.
\end{enumerate}
\end{enumerate}
\end{Proposition}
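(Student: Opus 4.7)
The plan is to bootstrap Lemma \ref{Lem:gammainsidetwoKKs} --- which handles minimizing curves confined to two adjacent chambers --- into the global statement by exploiting the tree structure of chambers (Lemma \ref{Lem:KKistree}) and the fact that every subsegment of $\gamma$ is itself combinatorially minimizing (Lemma \ref{Lem:subsegmentofminimizing}). I would establish (b) first, derive (c) from (b), and obtain (a) as a corollary of (c).

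For (b), fix a wall $W$ with adjacent chambers $K, K'$. The main difficulty is that $\gamma$ may detour out of $K \cup K'$ into further chambers. Using the tree structure, any minimal detour from $K$ (or $K'$) enters and leaves a neighboring chamber $K'' \neq K, K'$ through a single wall $W'' \neq W$, and Lemma \ref{Lem:gammainsidetwoKKs}(b) applied to such an innermost detour forces it to stay inside a single column of $K''$ and to cross $W''$ exactly twice, at a combinatorial cost of $2H$. I would then carry out a \emph{straightening} argument: inductively replace each innermost detour by a shortcut along $W''$ inside $K \cup K'$, using Lemma \ref{Lem:QQestimatesatW} to bound the horizontal and vertical distance distortion across $W''$. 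With $\eta$ small and $H$ large, each such replacement strictly decreases length while preserving the endpoints and the number of $W$-crossings, and iteration produces a curve inside $K \cup K'$ that crosses $W$ the same number of times as $\gamma$. Lemma \ref{Lem:gammainsidetwoKKs} applied to the straightened curve then yields the at-most-twice bound together with subcases (b1)--(b3); the extra clause in (b1) about minimal chains intersecting $W$ in at most two columns follows by applying the now-established bound to the column witnessing a hypothetical third encounter.

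For (c), fix a chamber $K$ and list the visited columns as $E_1, \ldots, E_n$ in order. Partition consecutive indices into maximal \emph{excursions}: each is either a contiguous stretch of $\gamma$ inside $K$ (to which Lemma \ref{Lem:gammainsidesingleK} applies, yielding (c5)) or an exit-and-return through a wall $W \subset \partial K$ (to which (b2) applies, yielding (c6) together with the separation $> H$ between the two columns of $K$ adjacent to $W$ at which $\gamma$ leaves and returns). Set $E^*_i := E_i$ unless $E_i$ is sandwiched between two wall-excursions --- case (c4) --- in which case I would take $E^*_i$ to be the neighbor of $E_i$ on the unique minimal chain between $E_{i-1}$ and $E_{i+1}$ in $\mathcal{E}_K$, whose existence is provided by Lemma \ref{Lem:EEinKisatree}. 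Verifying the individual assertions (c1)--(c7) then reduces to a finite case analysis that plays the lower bound $> H$ from (b2) against the upper bound $< 3H$ from (b3). Part (a) follows from (c3): a disconnected preimage $\gamma^{-1}(E)$ would mean $E = E_i = E_{i'}$ for some $i < i'$, forcing $E^*_i, E^*_{i'}$ to coincide or be immediate neighbors, contradicting their pairwise distinctness on the minimal chain once $H$ is large.

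The main obstacle is the straightening step in the proof of (b): ensuring that each detour-replacement strictly decreases the combinatorial length requires balancing the length penalties ($H$ per wall crossing, $\eta$ per $S_K$ crossing, $1$ per $C_K$ crossing) against the distortion of horizontal and vertical distance across a wall given by Lemma \ref{Lem:QQestimatesatW}. This is the delicate quantitative input that forces the choice of the constants $\eta^*$ and $H^*$; once it is carried out, the remainder is structured but routine combinatorial bookkeeping within a single chamber.
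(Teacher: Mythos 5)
The central step in your proposal --- the straightening argument for part (b) --- does not work, and the flaw is not a technicality but a conceptual one. You claim that each replacement of an innermost detour by a shortcut inside $K \cup K'$ ``strictly decreases length.'' But $\gamma$ is combinatorially minimizing, so by definition no subsegment of $\gamma$ can be replaced by a strictly shorter curve with the same endpoints. Indeed, Lemma \ref{Lem:gammainsidetwoKKs}(b) tells you precisely when a minimizing curve \emph{does} make a detour through a third chamber $K''$ via a wall $W'' \neq W$: when the two re-entry columns satisfy $\dist^H_K(E_1, E_2) > H$. In that regime the detour costs roughly $2H + \eta\,\dist^V_{K''}$, while the straight-through path in $K$ costs roughly $\dist^H_K(E_1, E_2) + \eta\,\dist^V_K$, and Lemma \ref{Lem:QQestimatesatW} shows both vertical terms are comparable to $\dist^H_K$. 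So once $\dist^H_K > 2H$ the detour is genuinely \emph{shorter}, the shortcut is longer, and the straightening terminates without producing a curve inside $K \cup K'$. Exactly such curves are what assertions (b2) and (c4) of the Proposition are describing, so any proof strategy that tries to eliminate detours must fail. The same issue blocks your claim that iteration preserves endpoints and $W$-crossing count while yielding a curve to which Lemma \ref{Lem:gammainsidetwoKKs} applies.

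The paper's proof handles this differently: it runs a strong induction on the combinatorial length $|\gamma|$ and proves assertions (a), (b), (c) simultaneously. To show $\gamma$ crosses $W$ at most twice, it does not modify $\gamma$; instead, it applies assertion (b2) of the \emph{inductive hypothesis} to each proper subsegment of $\gamma$ bounded by two consecutive $W$-crossings (which is strictly shorter and hence covered by the hypothesis), concluding that $\gamma$ stays in $K \cup K'$ between its first and last crossing --- at which point Lemma \ref{Lem:gammainsidetwoKKs} gives the contradiction. The subsegment-induction, not straightening, is the mechanism that tames detours, and it is also what drives the construction of the columns $E^*_i$ in part (c): the paper defines $E^*_1, \ldots, E^*_{n-1}$ via the inductive hypothesis applied to the subsegment up to $E_{n-1}$, then analyzes $E^*_{n-1}$ as the median (in the tree $\mathcal{E}_K$) of $E_1, E_{n-1}, E_n$ using (b1) and (b2). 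Your local definition of $E^*_i$ via the neighbor on the chain between $E_{i-1}$ and $E_{i+1}$ is not obviously compatible with (c3), which asks the $E^*_i$ to lie \emph{in order} on the single global minimal chain from $E_1$ to $E_n$; verifying that would require the same global inductive bookkeeping you were hoping to avoid. Finally, deriving (a) from (c3) alone is also not immediate --- if $E_i = E_{i'}$ then (c2)+(c3) only give $\dist^H_K(E^*_i, E^*_{i'}) \leq 2$ with both on the chain, which is not yet a contradiction; the paper instead deduces (a) from (c7).
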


\begin{proof}
The proof uses induction on the combinatorial length $|\gamma|$ of $\gamma$.
The case $|\gamma| = 0$ is obvious, so assume that $|\gamma| > 0$ and that all assertions of the Proposition hold for all combinatorially shorter minimizing arcs.

Let $W \in \mathcal{W}$ be a wall and $K, K' \in \mathcal{K}$ the chambers that are adjacent to $W$ from either side.
We first check the first statement of assertion (b).
Assume that $\gamma$ intersects $W$ three times or more.
Then by assertion (b2) of the induction hypothesis applied to every subsegment of $\gamma$ that intersects $W$ exactly twice, we obtain that $\gamma$ stays within $K \cup K'$ between its first and last intersection with $W$.
This however contradicts Lemma \ref{Lem:gammainsidetwoKKs}.

Assertion (b2) follows similarly.
Assume that $\gamma$ intersects $W$ twice and that both endpoints lie on the same side of $W$ as $K$.
Let $E \in \mathcal{E}_K$ be the first column that $\gamma$ intersects in $K$ and $E^* \in \mathcal{E}_K$ the last.
By assertion (b1) of the induction hypothesis applied to the subsegment of $\gamma$ from $\gamma(0)$ to $E^*$, we find that $E = E^*$.
By assertion (a) of the induction hypothesis, $\gamma$ remains within $E$ between both intersections with $W$.
So we obtain again that $\gamma$ stays within $K \cup K'$ between its first and second intersection with $W$.
The rest follows with Lemma \ref{Lem:gammainsidetwoKKs}(b).

For assertion (b1), observe that the complete assertion (c) holds for $\gamma$ in the case in which $\gamma$ crosses a wall exactly once, because in this case assertion (c) is only concerned with proper subsegments of $\gamma$.
So consider the columns $E_1, \ldots, E_n, E^*_1, \ldots, E^*_n \in \mathcal{E}_K$.
Without loss of generality, we may assume that $\gamma(1)$ lies on the same side of $W$ as $K$.
This implies that $E_1$ is adjacent to $W$.
If $E_i$ for some $2 \leq i \leq n$ was adjacent to $W$ as well, then by assertions (c7) and (b2) the arc $\gamma$ must be contained in $K$ in between $E_1$ and $E_i$.
This is however impossible by Lemma \ref{Lem:gammainsidetwoKKs}(a) applied to the subsegment of $\gamma$ between the last column in $K'$ and $E_i$.
So the first part of (b1) holds.
Consider now the minimal chain between $E_1$ and some $E_i$ and assume that three of its columns are adjacent to $W$.
Those columns need to be the first three columns in this chain.
We can assume that $E_i = E_n$, because otherwise we could pass to a subsegment of $\gamma$.
Since by what we have already shown, $\gamma$ cannot intersect any column that is adjacent to $W$ other than $E_1$, it cannot happen that $E_2$ is adjacent to $E_1$ (compare with assertions (c4), (c5)).
So by assertion (c6) there is a wall $W' \subset \partial K$, $W' \neq W$ that is intersected twice by $\gamma$ in between $E_1$ and $E_2$.
So if $H$ is sufficiently large, assertion (c6) implies that the first three columns on the minimal chain between $E_1$ and $E^*_2$, i.e. the first three columns on the minimal chain between $E_1$ and $E_n$ are adjacent to both $W'$ and $W$.
This however contradicts Lemma \ref{Lem:EEinKisatree} and finishes the proof of assertion (b1).

We now show assertions (c1)--(c7), (a) and (b3).
Observe that by the induction hypothesis, it suffices to restrict our attention to the case in which $\gamma(0), \gamma(1) \in K$.
Consider now the columns $E_1, \ldots, E_n$ as defined in the proposition.
If $n \leq 2$, we are done with the choice $E^*_1 = E_1$ and $E^*_2 = E_2$ by assertion (b2) and assuming $H > 2$.
So assume that $n \geq 3$.
Assertion (c5) and the first part of (c6) follows immediately by passing to the subsegment between $E_i, E_{i+1}$ and using the induction hypothesis.
We will now distinguish the cases of when $E_{n-1}$ lies on the minimal chain between $E_1, E_n$ or not and establish assertions (c1)--(c4) and the second part of assertion (c6) in each case.
Based on these assertions we will next conclude assertion (c7) in both cases.

Consider first the case in which $E_{n-1}$ lies on the minimal chain between $E_1$ and $E_n$.
Then we can apply the induction hypothesis to the subsegment of $\gamma$ between $E_1$ and $E_{n-1}$ and obtain the columns $E^*_1, \ldots, E^*_{n-1}$ on the minimal chain between $E_1$ and $E_{n-1}$.
Moreover, we set $E^*_n = E_n$.
Assertions (c1)--(c6) follow immediately.

Next consider the case in which $E_{n-1}$ does not lie on the minimal chain between $E_1$ and $E_n$.
Define $E^*_1, \ldots, E^*_{n-2}$ using the induction hypothesis applied to the subsegment of $\gamma$ between $E_1$ and $E_{n-1}$.

Assume first that $E_n$ and $E_{n-1}$ are adjacent.
Then $E_n$ must lie on the minimal chain between $E_1$ and $E_{n-1}$ (by our assumption and the tree property, see Lemma \ref{Lem:EEinKisatree}).
So $E_{n-2}$ cannot be adjacent to $E_{n-1}$, because that would imply by assertion (c4) of the induction hypothesis that $E_{n-2} = E_n$ and it is elementary that $\gamma$ cannot reenter a column without exiting $K$.
This means (by assertion (c6) of the induction hypothesis) that there is a wall $W \subset \partial K$ that intersects $\gamma$ twice and that is adjacent to $E_{n-2}, E^*_{n-2}, E_{n-1}$ and hence also $E_n$.
This however contradicts assertion (b2).

So $E_n$ and $E_{n-1}$ are not adjacent and by assertion (b2) both columns are adjacent to a wall $W \subset \partial K$ such that $\gamma$ intersects $W$ right after $E_{n-1}$ and right before $E_n$.
By the tree property of $\mathcal{E}_K$ there is a column $E^* \in \mathcal{E}_K$ that lies on the three minimal chains between $E_{n-1}, E_n$ and $E_1, E_{n-1}$ and $E_1, E_n$.
So $E^*$ is adjacent to $W$ and horizontally lies between $E_{n-1}, E_n$.
By our earlier assumption $E^* \neq E_{n-1}$.
Assertion (b1) applied to a subsegment of $\gamma$ implies that $\dist^H_K (E^*, E_{n-1}) \leq 1$; so $E^*$ is adjacent to $E_{n-1}$.
If $E_{n-2}$ was adjacent to $E_{n-1}$, then $E_{n-2} = E^*$ contradicting assertion (b2).
So by assertion (c6) of the induction hypothesis $E_{n-2}, E^*_{n-2}, E_{n-1}$ are adjacent to a wall $W' \subset \partial K$ such that $\gamma$ intersects $W'$ twice between $E_{n-2}, E_{n-1}$.
This implies $W \neq W'$.
Set $E^*_{n-1} = E^*$ and $E^*_n = E_n$.
Assertions (c1)--(c3) follow immediately.
Assertion (c4) and the second part of (c6) hold with the walls $W, W'$ that we have just defined.

We now establish assertion (c7) in the general case (i.e. independently on whether $E_{n-1}$ lies on the minimal chain between $E_1$ and $E_n$ or not).
Assume that $(E_{i_1}, \ldots, E_{i_2})$ does not form a minimal chain.
Then $\gamma$ has to leave $K$ in between $E_{i_1}$ and $E_{i_2}$, i.e. by assertion (c5) there is a $j \in \{ i_1, \ldots, i_2 -1 \}$ such that $E_j, E_{j+1}$ are not adjacent and hence by assertions (c6) $\gamma$ has to intersect a wall $W' \subset \partial K$ in between $E_j$ and $E_{j+1}$.
The columns on the minimal chain between $E^*_j, E^*_{j+1}$ are adjacent to both $W$ and $W'$ and for $H > 10$ there are at least $3$ such columns.
So by Lemma \ref{Lem:EEinKisatree} $W = W'$ and by assertion (b2) we must have $E_{i_1} = E^*_j$, $E_{i_2} = E^*_{j+1}$.

Finally, assertion (a) is a direct consequence of assertion (c7) and assertion (b3) follows from assertion (c7) and Lemma \ref{Lem:gammainsidetwoKKs}(c).
\end{proof}

Next, we analyze the relative behavior of two combinatorially minimizing arcs.

\begin{Lemma} \label{Lem:distg1g2afterW}
There are constants $\eta^* > 0$ and $H^* < \infty$ such that if $\eta \leq \eta^*$ and $H \geq H^*$, then the following holds:

Let $\gamma_1, \gamma_2 : [0,1] \to \td{M}$ be two combinatorially minimizing arcs and consider a wall $W \in \mathcal{W}$ that is adjacent to two chambers $K, K' \in \mathcal{K}$ on either side.
Assume that $\gamma_1, \gamma_2$ intersect $W$ exactly once and that $\gamma_1(0), \gamma_2(0)$ lie in a common chamber on the same side of $W$ as $K$.
If that chamber is $K$, we additionally require that the cells that contain these points are vertically aligned.
Similarly, assume that $\gamma_1(1), \gamma_2(1)$ lie in a common chamber on the same side of $W$ as $K'$.
If that chamber is $K'$, we also require that the cells that contain these points are vertically aligned.

Let $Q_1, Q_2 \subset K$ be the cells that $\gamma_1, \gamma_2$ intersect right before crossing $W$ and let $Q'_1, Q'_2 \subset K'$ be the cells that $\gamma_1, \gamma_2$ intersect right after crossing $W$.
Then every pair of the cells $Q_1, Q_2, Q'_1, Q'_2$ has combinatorial distance bounded by $4$ or  $4 + H$ depending on whether they lie on the same side of $W$ or not.
\end{Lemma}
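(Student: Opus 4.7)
The plan is first to reduce to the same-side bounds. The two cells $Q_i$ and $Q'_i$ are adjacent across $W$, so a curve in general position crossing only $W$ joins them, giving $\dist(Q_i, Q'_i) \leq H$. Combined with the triangle inequality, the bounds $\dist(Q_1, Q_2) \leq 4$ and $\dist(Q'_1, Q'_2) \leq 4$ immediately imply $\dist(Q_i, Q'_j) \leq 4 + H$ for every cross-wall pair. Since the $K'$-side bound is symmetric to the $K$-side one after swapping the roles of $K$ and $K'$ (and of $\gamma_j(0)$ and $\gamma_j(1)$), it suffices to establish $\dist(Q_1, Q_2) \leq 4$.

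By Proposition \ref{Prop:characterizationminimizinggamma}(b1), each $\gamma_i$ meets a unique column $E_i \in \mathcal{E}_K$ adjacent to $W$, and $Q_i \subset E_i$; similarly there are unique columns $E'_i \in \mathcal{E}_{K'}$ with $Q'_i \subset E'_i$. By Lemma \ref{Lem:gammainsidesingleK}, any path staying in $K$ joining $Q_1$ to $Q_2$ has combinatorial length $\dist^H_K(E_1, E_2) + \eta \dist^V_K(Q_1, Q_2)$, so the goal is to bound both $\dist^H_K(E_1, E_2)$ and $\dist^V_K(Q_1, Q_2)$ by absolute constants.

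The horizontal bound will come from an exchange argument against the minimality of $\gamma_2$. I would construct a competitor $\td\gamma_2$ from $\gamma_2(0)$ to $\gamma_2(1)$ that coincides with $\gamma_2$ outside a neighborhood of $K \cup K'$, but near $W$ is re-routed through $\gamma_1$'s crossing location: inside $K$ it travels from the last column $\gamma_2$ visits before reaching $W$-adjacent columns toward $E_1$ along the minimal chain in $\mathcal{E}_K$ supplied by Lemma \ref{Lem:EEinKisatree}, crosses $W$ at a cell vertically close to $\gamma_1$'s crossing, and inside $K'$ traverses from $E'_1$ back to the column where $\gamma_2$ continues. The cost difference $|\td\gamma_2| - |\gamma_2|$ is a linear combination of $\dist^H_K(E_1, E_2)$, $\dist^H_{K'}(E'_1, E'_2)$, and vertical shears along $W$ and (if $K_0 \neq K$) along the intervening wall $W_0 = \partial K_0 \cap \partial K$. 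In case (C), the strips $E_i \cap W$ and $E'_j \cap W$ are transverse, so Lemma \ref{Lem:QQestimatesatW}(b), (d), (f), (g) jointly bound these shears by a term of order $\dist^H_K(E_1, E_2) + \dist^H_{K'}(E'_1, E'_2) + 1$. Minimality of $\gamma_2$ then yields $|\td\gamma_2| \geq |\gamma_2|$, and after absorbing the $\eta$-weighted shear terms for $\eta \ll 1$ and $H \gg 1$, this forces $\dist^H_K(E_1, E_2)$, and symmetrically $\dist^H_{K'}(E'_1, E'_2)$, to be bounded by an absolute constant.

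With horizontal separation in $K$ and $K'$ both controlled, Lemma \ref{Lem:QQestimatesatW}(d) gives $\dist^V_K(Q_1, Q_2) < C_0$ and $\dist^V_{K'}(Q'_1, Q'_2) < C_0$, and then Lemma \ref{Lem:gammainsidesingleK} yields a path from $Q_1$ to $Q_2$ of combinatorial length at most $4$ once $\eta$ is sufficiently small relative to $C_0$. The main obstacle, by a wide margin, is the case $K_0 \neq K$ (and the analogous $K'_0 \neq K'$): the exchange must then coordinate wall-crossing positions simultaneously at $W_0$ and at $W$, without access to the vertical-alignment hypothesis at the starting cells. The shear control from Lemma \ref{Lem:QQestimatesatW} must be applied in succession at $W_0$ and $W$, and the bookkeeping has to ensure that the accumulated errors remain additive rather than multiplicative in the horizontal separations; otherwise the competitor fails to beat $\gamma_2$ and the argument collapses.
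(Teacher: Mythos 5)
Your reduction to the same-side bounds and the observation that $\dist(Q_i,Q'_i)\leq H$ gives the cross-wall cases are fine, as is the closing step that horizontal control plus Lemma \ref{Lem:QQestimatesatW}(d) and small $\eta$ yields the numerical bound $4$. The gap is exactly where you locate it yourself: the horizontal bound $\dist^H_K(E_1,E_2)\leq 3$ is never actually established, and the exchange/competitor argument you sketch is unlikely to be repairable as stated. A competitor $\td\gamma_2$ that reroutes through $\gamma_1$'s crossing incurs a cost that includes the horizontal travel $\dist^H_K(E_1,E_2)$ \emph{plus} the $\eta$-weighted shear terms, whereas what you save is only the horizontal travel $\gamma_2$ itself does near $W$; there is no reason minimality of $\gamma_2$ forces $\dist^H_K(E_1,E_2)$ to be bounded, since $\gamma_1$ and $\gamma_2$ do not share endpoints. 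When $\gamma_1(0),\gamma_2(0)$ are in a chamber $K_0\neq K$, the exchange would need to coordinate two wall crossings without any alignment hypothesis, and, as you note, the bookkeeping becomes multiplicative in the horizontal separations, which destroys the argument.

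The paper's route is not metric but combinatorial, and it is the tree structure of $\mathcal{E}_K$, not minimality of $\gamma_2$, that does the work. Concretely: let $E^*_1,E^*_2\in\mathcal{E}_K$ be the first columns of $K$ hit by $\gamma_1,\gamma_2$; whether the starting chamber is $K$ or $K_0\neq K$, both $E^*_1,E^*_2$ are adjacent to a common wall $W^*\neq W$ of $\partial K$ (this is where the vertical-alignment hypothesis, or the common entry wall, enters). Let $E^{**}_i$ be the last column on the minimal chain from $E_i$ to $E^*_i$ that is adjacent to $W$. Proposition \ref{Prop:characterizationminimizinggamma}(b1) gives $\dist^H_K(E_i,E^{**}_i)\leq 1$, and then the tree property together with the last assertion of Lemma \ref{Lem:EEinKisatree} (at most one wall is adjacent to two columns at horizontal distance $\geq 2$) pins $E^{**}_1$ and $E^{**}_2$ to be adjacent: if $W,W^*$ share an adjacent column then $E^{**}_1,E^{**}_2$ are both adjacent to $W^*$ and hence to each other; if not, the concatenated chain $E^{**}_1\to E^*_1\to E^*_2\to E^{**}_2$ meets $W$ only at its endpoints and so covers the minimal chain between them. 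This yields $\dist^H_K(E_1,E_2)\leq 3$ with no metric comparison at all. You should replace the exchange argument with this combinatorial one; your use of Proposition \ref{Prop:characterizationminimizinggamma}(b1) is a correct start, but you then need Lemma \ref{Lem:EEinKisatree} to finish, not a competitor curve.
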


\begin{proof}
Let $E_1, E_2 \in \mathcal{E}_K$ be the columns that contain $Q_1, Q_2$.
We first show that $\dist^H_K (E_1, E_2) \leq 3$ (in fact, we can show that $\dist^H_K (E_1, E_2) \leq 1$, but we don't need this result here).

Define $E^*_1, E^*_2 \in \mathcal{E}_K$ to be the first columns in $K$ that are intersected by $\gamma_1, \gamma_2$.
In the case $\gamma_1 (0), \gamma_2 (0) \in K$ we have $E^*_1 = E^*_2$.
So in either case, we can find a wall $W^* \subset \partial K$ with $W^* \neq W$ that is adjacent to both $E^*_1, E^*_2$.
Consider the minimal chain between $E_1, E^*_1$ and let $E^{**}_1$ be the last column on that chain that is adjacent to $W$.
Define $E^{**}_2$ accordingly.
By Proposition \ref{Prop:characterizationminimizinggamma}(b1) $\dist^H_K (E_1, E^{**}_1), \dist^H_K(E_2, E^{**}_2) \leq 1$.
We need to show that $\dist^H_K (E^{**}_1, E^{**}_2) \leq 1$.

If $W, W^*$ are adjacent to a common column, then both $E^{**}_1, E^{**}_2$ have to be adjacent to $W^*$ since in that case a minimal chain between $E_1, E^*_1$ first runs along $W$ and then along $W^*$.
Hence in that case $\dist^H_K (E^{**}_1, E^{**}_2) \leq 1$ by Lemma \ref{Lem:EEinKisatree}.
If $W, W^*$ are not adjacent to a common column, we follow the minimal chain between $E^{**}_1, E^*_1$, then the minimal chain between $E^*_1, E^*_2$ (along $W^*$) and finally the minimal chain between $E^*_2, E^{**}_2$, to obtain a chain that connects $E^{**}_1$ with $E^{**}_2$ and that intersects $W$ only in its first and last column.
By the tree property of $\mathcal{E}_K$ this chain covers the minimal chain between $E^{**}_1, E^{**}_2$ and hence it has to include all columns along $W$ between $E_1^{**}, E_2^{**}$.
So $\dist^H_K (E^{**}_1, E^{**}_2) \leq 1$.

It follows that $\dist^H_K (Q_1, Q_2) = \dist^H_K (E_1, E_2) \leq 3$.
Analogously $\dist^H_K (Q'_1, Q'_2) \leq 3$.
It now follows from Lemma \ref{Lem:QQestimatesatW}(d) that $\dist^V_K (Q_1, Q_2), \dist^V_K (Q'_1, Q'_2) < C_0$.
This establishes the claim for $\eta < C_0^{-1}$.
\end{proof}

\begin{Lemma} \label{Lem:interchangeorderinK}
There are constants $\eta^* > 0$ and $H^*, C_1 < \infty$ such that if $\eta \leq \eta^*$ and $H \geq H^*$, the following holds: 

Let $K \in \mathcal{K}$ be a chamber and $Q_1, \ov{Q}_1, Q_2, \ov{Q}_2 \subset K$ be cells such that $Q_1, \ov{Q}_1$ and $Q_2, \ov{Q}_2$ are vertically aligned in $K$.
Assume that the vertical order of $Q_1, \ov{Q}_1$ is opposite to the one of $Q_2, \ov{Q}_2$ (i.e. $Q_1$ is ``above'' $\ov{Q}_1$ and $Q_2$ is ``below'' $\ov{Q}_2$ or the other way round).
Let $\gamma, \ov\gamma : [0,1] \to \td{M}$ be minimizing arcs from $Q_1$ to $Q_2$ and from $\ov{Q}_1$ to $\ov{Q}_2$.
Then we can find cells $Q', \ov{Q}' \subset K$ such that $Q'$ intersects $\gamma$, $\ov{Q}'$ intersects $\ov\gamma$ and such that $\dist^H_K (Q', \ov{Q}') < 3H$, $\dist^V_K (Q', \ov{Q}') < C_1 H$ and $\dist ( Q', \ov{Q}' ) < 4H$.
\end{Lemma}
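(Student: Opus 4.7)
The plan is to exploit the fact that $\gamma$ and $\ov\gamma$ swap their vertical order between their starting and ending columns: this forces a discrete intermediate-value argument that locates horizontally close cells where the two curves have nearly equal vertical coordinates. Let $F_0, F_1, \ldots, F_L$ denote the minimal chain in $\mathcal{E}_K$ from the common starting column of $Q_1, \ov Q_1$ (call it $F_0$) to the common ending column of $Q_2, \ov Q_2$ (call it $F_L$). By Proposition \ref{Prop:characterizationminimizinggamma}(c), the columns visited inside $K$ by $\gamma$ project onto a subsequence of this chain in order, skipping only across wall detours and deviating horizontally from the chain by at most $1$; the same holds for $\ov\gamma$.

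First I would introduce real-valued tracking functions $v_\gamma, v_{\ov\gamma} : \{0, \ldots, L\} \to \IR$ that record the signed vertical coordinate (in the $\IR$-factor of $K \cong \td\Sigma_K \times \IR$) of each curve at horizontal position $\ell$ along the chain. When the curve visits a cell in $K$ whose column projects to $F_\ell$, the corresponding vertical coordinate is used; when $\ell$ falls inside a wall detour, I would interpolate linearly between the vertical coordinates of the exit and re-entry cells bordering the detour.

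Next I would prove a Lipschitz estimate $|v_\gamma(\ell+1) - v_\gamma(\ell)| \leq C_2$ with $C_2$ depending only on $\eta$ and $H$. A single horizontal step within $K$ contributes at most $1$. The delicate case is a wall detour: Proposition \ref{Prop:characterizationminimizinggamma}(b2)--(b3) bounds its horizontal width $d$ by $H < d < 3H$, while Lemma \ref{Lem:QQestimatesatW}(b) bounds the total vertical change across the detour by $C_0 d + C_0$, so the per-column interpolated change is at most $2C_0$. With this in hand, the discrete intermediate-value theorem applied to $v_\gamma - v_{\ov\gamma}$ (which begins at the signed vertical distance from $\ov Q_1$ to $Q_1$ and ends at one of opposite sign by the order-swap hypothesis) yields $\ell^* \in \{0, \ldots, L\}$ with $|v_\gamma(\ell^*) - v_{\ov\gamma}(\ell^*)| \leq 2C_2$.

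Finally I would pick $Q', \ov Q' \subset K$ as cells visited by $\gamma$ and $\ov\gamma$ respectively whose columns lie within horizontal distance at most $3H/2$ of $F_{\ell^*}$; such cells exist because any gap in the projected visits corresponds to a detour of horizontal width $< 3H$ by Proposition \ref{Prop:characterizationminimizinggamma}(b3). The horizontal bound $\dist^H_K(Q', \ov Q') < 3H$ follows by the triangle inequality, and the vertical bound $\dist^V_K(Q', \ov Q') < C_1 H$ follows from the Lipschitz estimate by choosing $C_1 \sim C_2$. Then Lemma \ref{Lem:gammainsidesingleK} gives $\dist(Q', \ov Q') \leq \dist^H_K(Q', \ov Q') + \eta\, \dist^V_K(Q', \ov Q') < 3H + \eta C_1 H < 4H$ once $\eta^*$ is chosen small enough that $\eta C_1 < 1$. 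The main obstacle is the Lipschitz estimate across wall detours: defining $v_\gamma$ unambiguously there is subtle since the curve is not in $K$ at all, and one needs the matching lower and upper bounds $H < d < 3H$ on detour widths, together with Lemma \ref{Lem:QQestimatesatW}(b), to force the per-column averaged vertical change to remain bounded independently of how long an individual detour is.
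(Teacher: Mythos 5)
Your Lipschitz/intermediate-value scheme is an appealing idea, but it breaks down at the final step, and the breakdown is exactly where the paper has to do its hardest work. The problem is your claim that ``any gap in the projected visits corresponds to a detour of horizontal width $< 3H$ by Proposition~\ref{Prop:characterizationminimizinggamma}(b3).'' That is not what (b3) says: (b3) bounds the horizontal span by $3H$ for a curve that does \emph{not} cross the wall $W$ but runs along it, whereas a wall detour is precisely a place where the curve \emph{does} cross $W$, and there (b2) gives only a \emph{lower} bound $\dist^H_K(E_1,E_2) > H$, with no upper bound at all. So a single detour of $\gamma$ can span an arbitrarily long stretch of the minimal chain. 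If your intermediate-value index $\ell^*$ lands in the middle of such a detour, there is no cell of $\gamma$ in $K$ within $3H/2$ (or any controlled multiple of $H$) of $F_{\ell^*}$, and your construction of $Q'$ fails. The Lipschitz estimate and the IVT are fine as far as they go, but they produce only an interpolated ``virtual'' crossing point; turning that into actual nearby cells on both curves is the crux, and it cannot be done locally.

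The paper's proof resolves this by comparing the detour structures of $\gamma$ and $\ov\gamma$ against each other, not each against the abstract chain $L$. After locating a crossing cell $Q^\circ$ on $L$ (via the sets $S'$, $\ov S'$), it splits into cases: either one of the curves visits all the columns of $L$ strictly between $E^*_i$ and $E^*_{i+1}$, in which case (b3) and (c7) applied to \emph{that} curve give the $<3H$ bound for the other curve's detour width; or both curves detour across the \emph{same} wall $W$ near $Q^\circ$, in which case Lemma~\ref{Lem:EEinKisatree} forces their exit columns to coincide to within horizontal distance $\leq 3$, and Lemma~\ref{Lem:QQestimatesatW}(b),(c) then give a uniform vertical bound on the exit cells directly, without any IVT. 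To repair your argument you would need to insert essentially this dichotomy at the point where you extract $Q'$ and $\ov Q'$: distinguish whether $\gamma$ and $\ov\gamma$ are detouring at $\ell^*$, and when both detour, show (as the paper does via the tree property in Lemma~\ref{Lem:EEinKisatree}) that they detour across the same wall at nearly the same column and then apply the wall estimates to the exit cells rather than relying on the interpolated value at $\ell^*$.
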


\begin{proof}
Note that the last inequality follows from the first two inequalities if $\eta^* < C_1^{-1}$.

Let $E_0, E_\omega \in \mathcal{E}_K$ be the columns that contain $Q_1, \ov{Q}_1$ and $Q_2, \ov{Q}_2$.
We first invoke Proposition \ref{Prop:characterizationminimizinggamma}(c) on $\gamma$ to obtain columns $E_1, \ldots, E_n$, $E^*_1, \ldots, E^*_n \in \mathcal{E}_K$ with $E_1 = E^*_1 = E_0$ and $E_n = E^*_n = E_\omega$.
Then $E^*_1, \ldots, E^*_n$ lie on the minimal chain $L$ between $E_0$ and $E_\omega$.
Let $S \subset L \cup E_1 \cup \ldots \cup E_n$ be the union of all cells in $K$ that intersect $\gamma$ and all cells in $L \cup E_1 \cup \ldots \cup E_n$ that are adjacent to those cells outside $K$ which intersect $\gamma$.
Then $Q_1, Q_2 \subset S$ and by the results of Proposition \ref{Prop:characterizationminimizinggamma} these two cells lie in the same connected component of $S$.
Based on the set $S$ we construct another set $S' \subset L$ as follows: $S'$ is the union of $S \cap L$ with all cells in each $E^*_i$ that are horizontally aligned with a cell in $S \cap E_i$.
Then again $Q_1, Q_2 \subset S'$ and both cells lie in the same connected component of $S'$.
Similarly, we can invoke Proposition \ref{Prop:characterizationminimizinggamma}(c) on $\ov\gamma$, obtaining columns $\ov{E}_1, \ldots, \ov{E}_{\ov{n}} \in \mathcal{E}_K$ and $\ov{E}^*_1, \ldots, \ov{E}^*_{\ov{n}}$ on $L$ and we can define $\ov{S}$ and $\ov{S}'$ in the same way.
So $\ov{Q}_1, \ov{Q}_2 \subset \ov{S}'$ and both cells lie in the same connected component of $\ov{S}'$.
Since the cells on $L$ are arranged on a rectangular lattice and the cells $Q_1, \ov{Q}_1$ and $Q_2, \ov{Q}_2$ lie on opposite sides of $L$ and have opposite vertical order, we conclude that the sets $\ov{S}$ and $\ov{S}'$ have to intersect.
Let $Q^\circ \subset S' \cap \ov{S}'$ be a cell in the intersection and $E^\circ \subset L$ the column containing $Q^\circ$.
So we can find cells $Q, \ov{Q} \in \mathcal{Q}$ that intersect $\gamma, \ov\gamma$ such that the following holds:
Either $Q \subset L$ and $Q^\circ = Q$, or $Q \subset (E_1 \cup \ldots \cup E_n) \setminus \Int L$ and $Q$ is adjacent and horizontally aligned with $Q^\circ$, or $Q \not\subset K$ and $Q$ is either adjacent to $Q^\circ$ or $Q^\circ \subset E^*_i$ for some $i \in \{ 1, \ldots, n \}$ for which $E^*_i \neq E_i$ and $Q$ is adjacent to a cell in $E_i$ that is adjacent to $Q^\circ$ and horizontally aligned with it.
In the first two cases we set $Q' := Q$.
In the third case we will define $Q'$ later.
So if $Q \subset K$, then $Q'$ intersects $\gamma$ and $\dist^H_K (Q', Q^\circ) \leq 1$ and $\dist^V_K (Q', Q^\circ) = 0$.
The analogous characterization holds for $\ov{Q}$ and we define $\ov{Q}'$ in the same way if $\ov{Q} \subset K$.

We now consider the case in which $\dist^H_K (E^\circ, E^*_i) \leq 1$ for some $i \in \{ 1, \ldots, n \}$, and we establish the existence of a cell $Q' \subset K$ that intersects $\gamma$ and that is within bounded distance from $Q^\circ$.
If $Q \subset K$, then we are done by the previous paragraph.
So assume that $Q \not\subset K$.
Let $K' \in \mathcal{K}$ be the chamber that contains $Q$ and let $W = K \cap K' \in \mathcal{W}$ be the wall between $K$ and $K'$.
So $\gamma$ intersects $W$ twice and $E^\circ$ is adjacent to $W$.
Choose $i' \in \{1, \ldots, n-1 \}$ such that $\gamma$ intersects $W$ between $E_{i'}, E_{i'+1}$.
If $E^\circ$ lies between $E^*_{i'}, E^*_{i'+1}$, then $\dist^H_K (E^\circ, E^*_{i'}) \leq 1$ or $\dist^H_K (E^\circ, E^*_{i'+1}) \leq 1$, by our initial assumption.
If $E^\circ$ lies on $L$ not between $E^*_{i'}, E^*_{i'+1}$, then we can conclude by applying Proposition \ref{Prop:characterizationminimizinggamma}(b1) to subsegments of $\gamma$ which intersect $W$ exactly once, that we still have $\dist^H_K (E^\circ, E^*_{i'}) \leq 1$ or $\dist^H_K (E^\circ, E^*_{i'+1}) \leq 1$ (note that there is an $i'' \neq i', i'+1$ such that the minimal chain from $E_{i'}$ or $E_{i'+1}$ to $E_{i''}$ contains the columns $E^*_{i'}$ or $E^*_{i'+1}$ and $E^\circ$ in that order).
So in either case $\dist^H_K ( E^\circ, E^*_{i'}) \leq 1$ or $\dist^H_K (E^\circ, E^*_{i'+1}) \leq 1$ and, after possibly rechoosing $i$, we may assume that $i = i'$ or $i = i'+1$.
Let now $Q' \subset E_i$ be the cell that is intersected by $\gamma$ right before or right after $W$, depending on whether $i = i'$ or $i = i'+1$.
Then $\dist^H_K (Q^\circ, Q') \leq 2$ and by Lemma \ref{Lem:QQestimatesatW}(b) we get $\dist^V_K (Q^\circ, Q') < 3C_0 + 1$.

Combining the previous conclusion with the analogous conclusion for $\ov\gamma$ and the triangle inequality yields the desired result in the case in which there are indices $i \in \{ 1, \ldots, n \}$ and $\ov{i} \in \{ 1, \ldots, \ov{n} \}$ such that $\dist^H_K (E^\circ, E^*_i) \leq 1$ and $\dist^H_K (E^\circ, \ov{E}^*_{\ov{i}}) \leq 1$.
So, after possibly interchanging the roles of $\gamma$ and $\ov\gamma$, it remains to consider the case in which there is an index $i \in \{ 1, \ldots, n-1 \}$ such that $E^\circ$ lies strictly in between $E^*_i, E^*_{i+1}$ and such that $E^\circ$ is not adjacent to either of these columns.
We will henceforth always assume that.
Let $W \subset \partial K$ be the wall that $\gamma$ intersects between $E_i, E_{i+1}$ and let $K' \in \mathcal{K}$ be the chamber on the other side.
Then $E_i, E^*_i, E^\circ, E^*_{i+1}, E_{i+1}$ are arranged along $W$ in that order and by Lemma \ref{Lem:EEinKisatree} we must have $Q \subset K'$ (note that every wall that intersects $\gamma$ twice is adjacent to a subchain of $L$ that is not contained in the chain between $E_i^*$ and $E^*_{i+1}$); let $E \in \mathcal{E}_{K'}$ be the column that contains $Q$.
Finally, let $Q' \subset E_i$ be the cell that $\gamma$ intersects right before $W$.

Consider the columns on $L$ between $\ov{E}^*_{\ov{i}}, \ov{E}^*_{\ov{i}+1}$ for each $\ov{i} = 1, \ldots, \ov{n}-1$.
If for some $\ov{i}$ there are at least $3$ such columns that are also (not strictly) between $E^*_i$ and $E^*_{i+1}$, we must have $\dist^H_K (\ov{E}^*_{\ov{i}}, \ov{E}^*_{\ov{i}+1}) > 1$ and all columns between $\ov{E}^*_{\ov{i}}$ and $\ov{E}^*_{\ov{i}+1}$ have to be adjacent to $W$ by Lemma \ref{Lem:EEinKisatree}.
However, this situation can only occur for at most one index $\ov{i}$.
So there are two different cases:
Either there is no such $\ov{i}$ and hence all columns that are strictly between $E^*_i$ and $E^*_{i+1}$ are contained in $\ov{E}^*_1 \cup \ldots \cup \ov{E}^*_{\ov{n}}$.
In this case all columns of this union that are strictly between $E^*_i$ and $E^*_{i+1}$ have to be adjacent to one another, and hence by Proposition \ref{Prop:characterizationminimizinggamma}(c4) $\gamma$ has to intersect all these columns.
In the second case there is exactly one such $\ov{i}$ and each column that is strictly between $E^*_i$ and $E^*_{i+1}$ either lies (not strictly) between $\ov{E}^*_{\ov{i}}$ and $\ov{E}^*_{\ov{i}+1}$ or in one of the unions $\ov{E}^*_0 \cup \ldots \cup \ov{E}^*_{\ov{i}-1}$ and $\ov{E}^*_{\ov{i}+1} \cup \ldots \cup \ov{E}^*_{\ov{i}+1}$.
Those two subsets can only cover the remaining columns, which are strictly between $E^*_i$ and $E^*_{i+1}$, if the columns of $\ov{E}^*_0, \ldots, \ov{E}^*_{\ov{n}}$ that lie strictly between $E^*_i$ and $E^*_{i+1}$ are adjacent to one another, which implies that either $\ov{E}^*_{\ov{i}}$ is adjacent to $\ov{E}^*_{\ov{i}-1}$ and/or $\ov{E}^*_{\ov{i}+1}$ is adjacent to $\ov{E}^*_{\ov{i+1}}$, depending on the side on which the remaining columns lie.
So we conclude using Proposition \ref{Prop:characterizationminimizinggamma}(c4) that if not all columns that are strictly between $E^*_i$ and $E^*_{i+1}$ are also (not strictly) between $\ov{E}^*_{\ov{i}}$ and $\ov{E}^*_{\ov{i}+1}$, then $\ov{E}^*_{\ov{i}}$ and/or $\ov{E}^*_{\ov{i}+1}$ lies strictly between and we have $\ov{E}^*_{\ov{i}} = \ov{E}_{\ov{i}}$ and/or $\ov{E}^*_{\ov{i}+1} = \ov{E}_{\ov{i}+1}$, respectively.
Now by Proposition \ref{Prop:characterizationminimizinggamma}(b1) applied to the minimal chain between $E_0$ and $\ov{E}_{\ov{i}}$ and/or the minimal chain between $E_n$ and $\ov{E}_{\ov{i}+1}$, we conclude that $\ov{E}_{\ov{i}}$ has to be adjacent to $E^*_i$ and/or $\ov{E}_{\ov{i}+1}$ has to be adjacent to $E^*_{\ov{i}+1}$, depending on which of these columns lie strictly between $E^*_i$ and $E^*_{i+1}$.
So, to summarize our findings: Either all columns that are strictly between $E^*_i$ and $E^*_{i+1}$ intersect $\gamma$ or they lie (not strictly) between $\ov{E}^*_{\ov{i}}$ and $\ov{E}^*_{\ov{i}+1}$.
In the second case, we can apply the same argument reversing the roles of $\gamma$ and $\ov\gamma$ to conclude that there is no other index $i' \in \{ 1, \ldots, n-1 \}$, $i' \neq i$ such that there are more than $2$ columns that are between $E^*_{i'}, E^*_{i'+1}$ and $\ov{E}^*_{\ov{i}}, \ov{E}^*_{\ov{i}+1}$.
This implies that $\dist^H_K (E^*_i, \ov{E}^*_{\ov{i}}), \dist^H_K (E^*_{i+1}, \ov{E}^*_{\ov{i}+1}) \leq 1$ in the second case.

In the first case, we use Proposition \ref{Prop:characterizationminimizinggamma}(b3) and (c7) to find that $\ov\gamma$ intersects fewer than $3H$ columns that are adjacent to $W$.
So $\dist^H_K (\ov{E}^*_{\ov{i}}, \ov{E}^*_{\ov{i}+1}) \leq 3H$.
Since $\ov\gamma$ intersects $E^\circ$ we have $\ov{Q}' := \ov{Q} = Q^\circ$.
Hence $\dist^H_K (Q', \ov{Q}') < 3H$ and Lemma \ref{Lem:QQestimatesatW}(b) yields $\dist^V_K (Q', \ov{Q}') < 3C_0H + C_0$ and we are done.

In the second case, $E^\circ$ lies strictly between $\ov{E}^*_{\ov{i}}, \ov{E}^*_{\ov{i}+1}$.
So $\ov{Q} \not\subset K$ and $\ov\gamma$ intersects $W$ between $\ov{E}_{\ov{i}}, \ov{E}_{\ov{i}+1}$ (by Lemma \ref{Lem:EEinKisatree}).
Let $\ov{K}' \in \mathcal{K}$ be the chamber that contains $\ov{Q}$ and $\ov{W} = K \cap \ov{K}' \in \mathcal{W}$ the wall between $K$ and $\ov{K}'$.
We will now show that $\ov{K}' = K'$ and $\ov{W} = W$.
If not, then there must be an index $\ov{i}' \in \{ 1, \ldots, n-1 \}, \ov{i}' \neq \ov{i}$ such that $\ov{W}$ is adjacent to $\ov{E}^*_{\ov{i}'}, \ov{E}^*_{\ov{i}'+1}, E^\circ$.
If $\ov{i}' < \ov{i}$, then $\ov{W}$ is also adjacent to $E^*_i$ (which is then between $\ov{E}^*_{\ov{i}'}$ and $E^\circ$), contradicting Lemma \ref{Lem:EEinKisatree}.
If $\ov{i}' > \ov{i}$, then $\ov{W}$ is also adjacent to $E^*_{i+1}$, contradicting Lemma \ref{Lem:EEinKisatree} as well.
So indeed $Q, \ov{Q} \subset \ov{K}' = K'$; let $\ov{E} \in \mathcal{E}_{K'}$ be the column that contains $\ov{Q}$.
Let now $\ov{Q}' \subset \ov{E}_{\ov{i}}$ be the cell that $\ov\gamma$ intersects right before $W$.

Recall that $Q' \subset E_i$ and $\ov{Q}'\subset \ov{E}_{\ov{i}}$, that $Q'$ is adjacent to $E$, $\ov{Q}'$ is adjacent to $\ov{E}$ and that $E, \ov{E}$ are both adjacent to $Q^\circ$.
Moreover, by our previous conclusions $\dist^H_K (Q', \ov{Q}') \leq 3$.
Let $Q'' \subset E_i$ be a cell that is adjacent to $\ov{E}$.
Then by Lemma \ref{Lem:QQestimatesatW}(b) $\dist^V_K (\ov{Q}', Q'') < 4C_0$ and by Lemma \ref{Lem:QQestimatesatW}(c) $\dist^V_K (Q'', Q') < C_0$.
Hence $\dist^V_K (Q', \ov{Q}') < 5C_0$.
This finishes the proof of the Lemma.
\end{proof}

The next Lemma is a preparation for the combinatorial convexity estimate stated in Proposition \ref{Prop:combinatorialconvexityseveralSeifert}. 

\begin{Lemma} \label{Lem:combconvexseveralSeifertpreparation}
There are constants $\eta^* > 0$ and $H^* < \infty$ such that if $\eta \leq \eta^*$ and $H \geq H^*$, then the following holds:

Let $K \in \mathcal{K}$ be a chamber and $Q_0, Q_1, Q_2 \subset K$ cells such that $Q_1$ and $Q_2$ are vertically aligned.
Assume that $\dist (Q_0, Q_1), \dist(Q_0, Q_2) \leq R$ for some $R \geq 0$.
Then for any cell $Q^* \subset K$ between $Q_1$ and $Q_2$, we have $\dist (Q_0, Q^*) < R + 8H$.
\end{Lemma}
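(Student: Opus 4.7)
Let $\gamma_1 : Q_0 \to Q_1$ and $\gamma_2 : Q_0 \to Q_2$ be combinatorially minimizing curves of combinatorial length at most $R$, and let $E \in \mathcal{E}_K$ denote the column of $K$ containing $Q_1, Q_2, Q^*$. My plan is to first analyse the structure of each $\gamma_i$ inside $E$: by Proposition~\ref{Prop:characterizationminimizinggamma}(a), $\gamma_i^{-1}(E)$ is a connected interval, and since $\gamma_i$ terminates at $Q_i \in E$, the curve $\gamma_i$ enters $E$ exactly once, at a cell $P_i \in E$, and thereafter follows a vertical path in $E$ to $Q_i$ of combinatorial length $\eta\dist^V_K(P_i, Q_i)$ (using Lemma~\ref{Lem:gammainsidesingleK}). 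If $Q^*$ happens to lie on this vertical segment for some $i$, then $\gamma_i$ passes through $Q^*$ and $\dist(Q_0, Q^*) \leq R$, so I may assume throughout the rest that $Q_1$ lies above $Q^*$, $Q_2$ below $Q^*$, $P_1$ strictly above $Q^*$, and $P_2$ strictly below $Q^*$. In the sub-case $Q_0 \in E$, Proposition~\ref{Prop:characterizationminimizinggamma}(a) applied to both endpoints of each $\gamma_i$ forces $\gamma_i \subset E$; depending on whether $Q_0$ lies above or below $Q^*$, one of the two vertical segments passes through $Q^*$ and again $\dist(Q_0, Q^*) \leq R$.

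For the main sub-case $Q_0 \notin E$, I consider the modified path obtained by following $\gamma_i$ up to $P_i$ and then moving vertically in $E$ to $Q^*$. A direct computation using that $P_i, Q_i$ both lie on the same side of $Q^*$ in $E$ bounds its combinatorial length by
\[ R - \eta\dist^V_K(P_i, Q_i) + \eta\dist^V_K(P_i, Q^*) \leq R + \eta|v_i - v^*|, \]
where $v_i, v^*$ denote vertical levels. Taking the minimum over $i \in \{1,2\}$ and using $|v_1 - v^*| + |v_2 - v^*| = \dist^V_K(Q_1, Q_2)$ then gives $\dist(Q_0, Q^*) \leq R + \tfrac{\eta}{2}\dist^V_K(Q_1, Q_2)$, which already establishes $\dist(Q_0, Q^*) < R + 8H$ whenever $\eta\dist^V_K(Q_1, Q_2) \leq 16H$.

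The remaining regime $\eta\dist^V_K(Q_1, Q_2) > 16H$ is the heart of the proof. Here one has $\dist(Q_1, Q_2) = \eta\dist^V_K(Q_1, Q_2)$ (because a minimizing curve between two cells of the same column stays in that column by Proposition~\ref{Prop:characterizationminimizinggamma}(a)) and also $\dist(Q_1, Q_2) \leq 2R$, so $R \geq 8H$. The plan is to apply Lemma~\ref{Lem:interchangeorderinK} to the reversed curve $\td\gamma_1 : Q_1 \to Q_0$ and an auxiliary minimizing curve $\ov\gamma : Q_2 \to Q_0^+$, where $Q_0^+$ is a cell vertically adjacent to $Q_0$ chosen so that $(Q_1, Q_2)$ and $(Q_0, Q_0^+)$ have opposite vertical orders in their respective columns. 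The lemma produces cells $Q', \ov Q' \subset K$ on the two curves with $\dist(Q', \ov Q') < 4H$ and $\dist^V_K(Q', \ov Q') < C_1 H$, and these can be combined with the subsegment bounds $\dist(Q_0, Q') \leq R$ and $\dist(Q_0, \ov Q') \leq R + 2\eta$ and with Lemma~\ref{Lem:gammainsidesingleK} to extract a path from $Q_0$ to $Q^*$ of combinatorial length less than $R + 8H$.

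The main obstacle I foresee is the last step: Lemma~\ref{Lem:interchangeorderinK} only controls the combinatorial distance between $Q'$ and $\ov Q'$, not the distance from either to $Q^*$. To close the argument, one must analyse the ``crossing level'' of the two curves along the minimal chain of columns from $E$ to the column of $Q_0$ and verify that the opposite-order choice of $Q_0^+$ forces this crossing to occur at a vertical level within $O(H/\eta)$ of $v^*$, so that $Q'$ (or $\ov Q'$) ends up within combinatorial distance of order $H$ from $Q^*$. This will rest on the tree structure of $\mathcal{E}_K$ from Lemma~\ref{Lem:EEinKisatree}, the near-wall estimates of Lemma~\ref{Lem:QQestimatesatW}, and the fine structural description of minimizing curves from Proposition~\ref{Prop:characterizationminimizinggamma}(c).
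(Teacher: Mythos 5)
Your proposal correctly identifies that Lemma~\ref{Lem:interchangeorderinK} is the engine of the proof, but the way you set it up and the way you plan to close the argument both differ from the paper's proof, and the gap you acknowledge at the end is a real one that your proposed fix would not close.

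The first missing idea is how to arrange the two curves so that Lemma~\ref{Lem:interchangeorderinK} produces something that bears directly on $Q^*$. You try this by choosing an auxiliary cell $Q_0^+$ vertically adjacent to $Q_0$ and running a second minimizing curve $Q_2 \to Q_0^+$. This does satisfy the opposite-vertical-order hypothesis, but then neither curve's endpoint is $Q^*$, so the cells $Q', \ov Q'$ the lemma produces carry no a priori information about $Q^*$'s vertical level. The paper instead uses the deck transformation $\varphi_z$ (the vertical shift on $K$ with $\varphi_z(Q_1) = Q^*$) and applies Lemma~\ref{Lem:interchangeorderinK} to $\varphi_z \circ \gamma_1$ and $\gamma_2$. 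Because $\varphi_z$ is an isometry of $(\mathcal{Q},\dist)$ and $\varphi_z \circ \gamma_1$ is a minimizing curve ending exactly at $Q^*$, the cell $Q'_1$ it produces lies on a minimizing path through $Q^*$, which is what one must ultimately reach.

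The second and more serious missing idea is how to handle the situation in which the interchange cells sit far (in vertical level) from $Q^*$. You propose to ``analyse the crossing level'' and argue that the opposite-order choice forces the crossing to occur within $O(H/\eta)$ of $v^*$. This is false: the two curves can both descend (or ascend) to the vicinity of $Q_0$'s vertical level immediately upon leaving $E$ and then travel essentially horizontally, so the crossing can occur at any vertical level between those of the four endpoints, with no relation to $v^*$. The paper sidesteps this by running an induction on $R$ (and secondarily on $\dist^V_K(Q_1,Q_2)$): if $\dist(Q_0, Q'_2) + \dist(\varphi_{-z}(Q'_1), Q_1) \leq R + 4H$ one concludes directly, and otherwise the two identities $\dist(Q_0, \varphi_{-z}(Q'_1)) + \dist(\varphi_{-z}(Q'_1), Q_1) \leq R$ and $\dist(Q_0, Q'_2) + \dist(Q'_2, Q_2) \leq R$ force $\dist(Q_0, \varphi_{-z}(Q'_1)) + \dist(Q'_2, Q_2) < R - 4H$, so the induction hypothesis applies to $\varphi_{-z}(Q'_1)$ in the role of $Q_0$ with a strictly smaller budget. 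Without both the deck-transformation normalization and this induction, the argument does not close. Your preliminary reductions (analysing the tails of $\gamma_i$ in $E$, the sub-case $Q_0 \in E$, and the regime $\eta\dist^V_K(Q_1,Q_2) \leq 16H$) are sensible but are not needed in the paper's argument, which handles all cases uniformly through the induction.
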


\begin{proof}
We prove this Lemma by induction on $R$ (observe that we are only interested in a discrete set of values of $R$) and then on $\dist^V_K(Q_1,Q_2)$.
Consider the action $\varphi : \IZ \curvearrowright \td{M}$ by deck transformations of the universal covering $\td{M} \to M$ that acts as a vertical shift on $K$, leaving $\td{V}$ and hence the cell structure and combinatorial distance function invariant and choose $z \in \IZ$ such that $Q^* = \varphi_z (Q_1)$.
We may assume $z \neq 0$.

Without loss of generality, we can assume that $Q^*$ lies between $Q_1$ and $Q^{**} =\varphi_{-z} (Q_2)$.
Otherwise, we can interchange the roles of $Q_1$ and $Q_2$.
Let $\gamma_1, \gamma_2$ be minimizing arcs between $Q_0$ and $Q_1, Q_2$.
We can now apply Lemma \ref{Lem:interchangeorderinK} to $\varphi_z \circ \gamma_1$ and $\gamma_2$ to obtain cells $Q'_1, Q'_2 \subset K$ on $\varphi_z \circ \gamma_1$ and $\gamma_2$ with $\dist (Q'_1, Q'_2) < 4 H$.
Then $\varphi_{-z} (Q'_1)$ lies on $\gamma_1$ and hence
\begin{equation} \label{eq:Q0phiQ1}
 \dist (Q_0, \varphi_{-z} (Q'_1) ) + \dist (\varphi_{-z} (Q'_1), Q_1) = \dist (Q_0, Q_1) \leq R.
\end{equation}
We also have
\begin{equation} \label{eq:Q0Qs2Q2}
 \dist (Q_0, Q'_2) + \dist (Q'_2, Q_2) = \dist (Q_0, Q_2) \leq R.
\end{equation}
If $\dist (Q_0, Q'_2) + \dist (\varphi_{-z} (Q'_1), Q_1) \leq R + 4H$, then
\[ \dist (Q_0, Q^*) \leq \dist(Q_0, Q'_2) + \dist (Q'_2, Q'_1) + \dist (Q'_1, \varphi_z (Q_1) )
< R + 8H, \]
which proves the desired estimate.
On the other hand, assume that $\dist (Q_0, Q'_2) + \dist (\varphi_{-z} (Q'_1), Q_1) > R + 4H$.
Then (\ref{eq:Q0phiQ1}) and (\ref{eq:Q0Qs2Q2}) give us
\[ \dist (Q_0, \varphi_{-z} (Q'_1)) + \dist (Q'_2, Q_2) < R - 4H . \]
It follows that
\begin{multline*}
 \dist (\varphi_{-z} (Q'_1), Q^{**}) = \dist (Q'_1, Q_2) \leq \dist (Q'_1, Q'_2) + \dist(Q'_2, Q_2) \\
 < 4 H + R - 4H - \dist (Q_0, \varphi_{-z} (Q'_1)) = R - \dist (Q_0, \varphi_{-z} (Q'_1)).
 \end{multline*}
Also by (\ref{eq:Q0phiQ1})
\[ \dist (\varphi_{-z} (Q'_1), Q_1) \leq R - \dist (Q_0, \varphi_{-z} (Q'_1)). \]
So by the induction hypothesis, we find that
\[ \dist (\varphi_{-z}  (Q'_1), Q^*) < R - \dist (Q_0, \varphi_{-z} (Q'_1)) + 8H. \]
This implies
\[ \dist (Q_0, Q^*) \leq \dist (Q_0, \varphi_{-z} (Q'_1)) + \dist (\varphi_{-z} (Q'_1), Q^*) < R + 8 H. \qedhere \]
\end{proof}

\begin{Proposition} \label{Prop:combinatorialconvexityseveralSeifert}
Assume that $M$ satisfies condition (C).
There are constants $\eta^* > 0$ and $H^* < \infty$ such that whenever $\eta \leq \eta^*$ and $H \geq H^*$, then the following holds:

Consider a cell $Q_0 \in \mathcal{Q}$, a chamber $K \in \mathcal{K}$ (not necessarily containing $Q_0$) and cells $Q_1, Q_2 \subset K$ that are vertically aligned within $K$.
Assume that $\dist (Q_0, Q_1)$, $\dist(Q_0, Q_2) \leq R$ for some $R \geq 0$.
Then for any cell $Q^* \subset K$ that is vertically aligned with $Q_1, Q_2$ and vertically between $Q_1$ and $Q_2$, we have $\dist (Q_0, Q^*) < R + 10 H$.
\end{Proposition}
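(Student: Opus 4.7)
The plan is to reduce the statement to Lemma \ref{Lem:combconvexseveralSeifertpreparation}, which establishes the analogous estimate in the special case where $Q_0$ already lies in the same chamber $K$. If $Q_0 \subset K$, the preparation lemma gives $\dist(Q_0,Q^*) < R + 8H \leq R + 10H$ immediately. So from now on I will assume that the chamber $K_0 \in \mathcal{K}$ containing $Q_0$ is distinct from $K$. By the tree structure of $\mathcal{K}$ (Lemma \ref{Lem:KKistree}), there is then a unique wall $W \subset \partial K$ that separates $Q_0$ from the interior of $K$; I denote by $K'$ the chamber on the opposite side of $W$ from $K$.

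Next, I will pick combinatorially minimizing curves $\gamma_i : [0,1] \to \td{M}$ from $Q_0$ to $Q_i$ for $i = 1,2$. Since each $\gamma_i$ joins points lying on opposite sides of $W$, it crosses $W$ an odd number of times; by Proposition \ref{Prop:characterizationminimizinggamma}(b) this number is at most $2$, hence exactly $1$. Let $Q''_i \subset K$ denote the cell that $\gamma_i$ enters right after its unique crossing of $W$. The hypotheses of Lemma \ref{Lem:distg1g2afterW} are then satisfied: both $\gamma_i$ cross $W$ exactly once; the starting cells coincide at the single cell $Q_0$, so any vertical-alignment requirement on the starting side is automatic; and the terminal cells $Q_1, Q_2 \subset K$ are vertically aligned by hypothesis. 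Since $Q''_1$ and $Q''_2$ lie on the same side of $W$ (namely the $K$-side), the lemma yields $\dist(Q''_1, Q''_2) \leq 4$.

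Now I will apply Lemma \ref{Lem:combconvexseveralSeifertpreparation} inside $K$ with $Q''_1$ playing the role of the basepoint. Setting $R' := \dist(Q_0, Q''_1)$, minimality of $\gamma_1$ gives $\dist(Q''_1, Q_1) = \dist(Q_0, Q_1) - R' \leq R - R'$, while the triangle inequality combined with $|R' - \dist(Q_0, Q''_2)| \leq 4$ and the minimality of $\gamma_2$ yields
\[ \dist(Q''_1, Q_2) \leq \dist(Q''_1, Q''_2) + \dist(Q''_2, Q_2) \leq 4 + \big(R - \dist(Q_0, Q''_2)\big) \leq R - R' + 8. \]
The preparation lemma, applied with the parameter $R - R' + 8$ in place of $R$, therefore gives $\dist(Q''_1, Q^*) < (R - R' + 8) + 8H$. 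Adding $\dist(Q_0, Q''_1) = R'$ via the triangle inequality produces $\dist(Q_0, Q^*) < R + 8 + 8H \leq R + 10H$, provided $H \geq 4$. Taking $\eta^*$ and $H^*$ to be the common threshold of the lemmas and propositions invoked above completes the argument.

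The main obstacle is verifying that Lemma \ref{Lem:distg1g2afterW} genuinely applies, which in turn relies on the single-crossing property of $\gamma_1, \gamma_2$ at $W$. Both facts follow from the detailed crossing analysis for combinatorially minimizing curves carried out in Proposition \ref{Prop:characterizationminimizinggamma}; once they are in hand, the proposition is a quantitative two-line consequence of the preparation lemma.
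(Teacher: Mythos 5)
Your proof is correct and follows essentially the same route as the paper: reduce to the preparation lemma via the unique wall $W$ through which the minimizing curves enter $K$, apply Lemma \ref{Lem:distg1g2afterW} to bound the distance between the entry cells by $4$, and then run the preparation lemma from one of those entry cells. The only difference is that you spell out why each $\gamma_i$ crosses $W$ exactly once (odd number of crossings by separation, at most two by Proposition \ref{Prop:characterizationminimizinggamma}(b)), a step the paper leaves implicit.
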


\begin{proof}
If $Q_0 \subset K$, we are done by the previous Lemma.
So assume that $Q_0$ lies outside of $K$ and let $\gamma_1, \gamma_2$ be minimizing arcs from $Q_0$ to $Q_1, Q_2$.

Then there is a unique wall $W \subset \partial K$ through which both $\gamma_1$ and $\gamma_2$ enter $K$.
Let $Q'_1, Q'_2 \subset K$ be the first cells in $K$ that are intersected by $\gamma_1, \gamma_2$.
So both cells are adjacent to $W$.
By Lemma \ref{Lem:distg1g2afterW} we know that $\dist(Q'_1, Q'_2) \leq 4$.

So
\[ \dist (Q'_1, Q_1) \leq R - \dist (Q_0, Q'_1) \]
and
\begin{multline*}
 \dist (Q'_1, Q_2) \leq \dist (Q'_1, Q'_2) + \dist (Q'_2, Q_2) 
 \leq 4 + R - \dist (Q_0, Q'_2) \\
 \leq 4 + R - \dist(Q_0, Q'_1) + \dist(Q'_1, Q'_2)  \leq R + 8 - \dist (Q_0, Q'_1) .
\end{multline*}
We can no apply Lemma \ref{Prop:combinatorialconvexityseveralSeifert} to obtain
\[ \dist(Q'_1, Q^*) < R + 8 - \dist(Q_0, Q'_1) + 8H . \]
So $\dist(Q_0, Q^*) < R + 10 H$ for $H > 4$.
\end{proof}

\subsection{A combinatorial convexity estimate if $M$ satisfies condition (B)} \label{subsec:combconvexincaseB}
Assume now that $M$ satisfies condition (B) in Proposition \ref{Prop:easiermaincombinatorialresultCaseb}, i.e. that $M$ is the total space of an $S^1$-bundle over a closed, orientable surface of genus $\geq 2$.
In this setting we will establish the same combinatorial convexity estimate as in Proposition \ref{Prop:combinatorialconvexityseveralSeifert}.
It will be stated in Proposition \ref{Prop:combinatorialconvexityCaseB}.
Its proof will resemble the proof in the previous subsection, except that most Lemmas will be simpler.

We first let $\mathcal{E} = \bigcup_{K \in \mathcal{K}} \mathcal{E}_K$ be the set of all columns of $\td{M}$.
We say that two columns $E_1, E_2 \in \mathcal{E}$ are \emph{adjacent} their intersection contains a point of $\td{V} \setminus \td{V}^{(1)}$.
In other words, $E_1, E_2$ are adjacent if and only if we can find cells $Q_1 \subset E_1$, $Q_2 \subset E_2$ such that $Q_1, Q_2$ are adjacent.
Observe that in the setting of condition (B) every column $E_1 \in \mathcal{E}$ is adjacent to only finitely many columns $E_2 \in \mathcal{E}$ and every wall $W \in \mathcal{W}$ intersects its adjacent columns $E \in \mathcal{E}$ from either side in parallel strips $E \cap W$ (see Lemma \ref{Lem:QQestimatesatW}).

The first Lemma is an analog of Lemma \ref{Lem:gammainsidetwoKKs}.

\begin{Lemma} \label{Lem:gamminKKsCaseB}
There are constants $\eta^* > 0$ and $H^* < \infty$ such that if $\eta \leq \eta^*$ and $H \geq H^*$, the following holds:

Consider two chambers $K, K' \in \mathcal{K}$ that are adjacent to one another across a wall $W = K \cap K' \in \mathcal{W}$.
Assume that $\gamma : [0,1] \to \td{M}$ is combinatorially minimizing and that its image is contained in $K \cup K'$.
Then the following holds:
\begin{enumerate}[label=(\alph*)]
\item The arc $\gamma$ intersects $W$ at most twice and $\gamma$ does not reenter any column, i.e. $\gamma^{-1} (E)$ is an interval for all $E \in \mathcal{E}$.
\item In the case in which $\gamma$ intersects $W$ exactly once, the following is true:
Then the columns on $\gamma$ that are adjacent to $W$ form two minimal chains in $K$ and $K'$, moving in the same direction, which are adjacent to one another in a unique pair of columns $E \in \mathcal{E}_K$ and $E' \in \mathcal{E}_{K'}$.
\item In the case in which $\gamma$ intersects $W$ exactly twice, the following is true:
Assume that $\gamma(0), \gamma(1) \in K$.
Let $E_1, E_2 \in \mathcal{E}_K$ be the columns that $\gamma$ intersects right before and after $W$.
Then $\gamma$ does not intersect any column of $K$ that is adjacent to $W$ and that horizontally lies strictly between $E_1$ and $E_2$.
Moreover, $\dist^H_K(E_1, E_2) > H$.
\end{enumerate}
\end{Lemma}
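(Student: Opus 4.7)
The plan is to mirror the proof of Lemma \ref{Lem:gammainsidetwoKKs} but to exploit the parallel-strip geometry along $W$ that Lemma \ref{Lem:QQestimatesatW} guarantees in case (B). I first record a combinatorial picture of $W$: by Lemma \ref{Lem:QQestimatesatW}, the strips $E \cap W$ for $E \in \mathcal{E}_K$ adjacent to $W$ and $E' \cap W$ for $E' \in \mathcal{E}_{K'}$ adjacent to $W$ are all parallel, so the columns adjacent to $W$ on each side are linearly ordered by the coordinate transverse to these strips, and each column on one side meets only a uniformly bounded number of columns on the other side. The non-reentry statement that $\gamma^{-1}(E)$ is an interval for every $E \in \mathcal{E}$ should be immediate from combinatorial minimality: a reentry into $E$ would let us replace the intermediate arc by a cheaper arc staying within $E$, using Lemma \ref{Lem:gammainsidesingleK}.

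For the one-crossing case I split $\gamma$ at its unique crossing time into a $K$-segment and a $K'$-segment and apply Lemma \ref{Lem:gammainsidesingleK} to each. Each segment traces out a minimal chain of columns in its chamber, so the sub-collection of those columns that are adjacent to $W$ forms a connected sub-chain on either side. The last column of the $K$-chain and the first column of the $K'$-chain are adjacent across $W$, giving the pair $(E, E')$; uniqueness and the ``same direction'' assertion will follow from the linear order of strips along $W$ together with minimality, because any back-and-forth along $W$ would admit a shortcut crossing $W$ at a closer pair of columns.

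For the two-crossing case with $\gamma(0), \gamma(1) \in K$, I take $Q_1 \subset E_1$ and $Q_2 \subset E_2$ to be the cells right before and after the $K'$-excursion, and let $Q'_1, Q'_2 \subset K'$ be the corresponding entry and exit cells. The $K'$-subsegment contributes a length controlled from below by the horizontal and vertical distances in $K'$ via Lemma \ref{Lem:gammainsidesingleK}, while the alternative path staying entirely in $K$ from $Q_1$ to $Q_2$ costs at most $\dist^H_K(Q_1,Q_2) + \eta\dist^V_K(Q_1,Q_2)$. I would then use the asymmetric estimate Lemma \ref{Lem:QQestimatesatW}(e) to dominate $\dist^V_K(Q_1,Q_2)$ by $\dist^V_{K'}(Q'_1,Q'_2) + C_0\dist^H_K(Q_1,Q_2) + C_0$. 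Comparing the two lengths against the minimality of $\gamma$ yields $\dist^H_K(E_1,E_2) > H$ for $H$ sufficiently large and $\eta$ sufficiently small. A refinement of the same comparison rules out $\gamma$ visiting any $K$-column adjacent to $W$ that lies strictly between $E_1$ and $E_2$, since such a visit would, via the linear order of strips along $W$, provide an even cheaper bypass through $K$. Finally, three or more crossings of $W$ are excluded exactly as in the proof of Lemma \ref{Lem:gammainsidetwoKKs}: the two-crossing assertion applied to successive subsegments pins $\gamma$ to an oscillation around a fixed pair of columns, and a direct length comparison gives a contradiction.

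The main obstacle is the asymmetry of Lemma \ref{Lem:QQestimatesatW}(e) in case (B): unlike the intersecting-strips situation of case (C) used in the proof of Lemma \ref{Lem:gammainsidetwoKKs}, vertical distances are not controlled symmetrically between $K$ and $K'$, and one must orient each length comparison so that the $K$-side vertical cost is always dominated by the $K'$-side vertical cost plus a horizontal correction. Once this bookkeeping is in place, the remaining estimates reduce to repeated applications of Lemma \ref{Lem:gammainsidesingleK} and the triangle inequality.
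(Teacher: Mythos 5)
Your high-level strategy — mirror Lemma \ref{Lem:gammainsidetwoKKs}, exploit parallel strips via Lemma \ref{Lem:QQestimatesatW}, and orient each length comparison so that the $K$-vertical cost is controlled by the $K'$-vertical cost plus a horizontal correction — matches the paper's proof. You also correctly identify the asymmetry of Lemma \ref{Lem:QQestimatesatW}(e) as the main new bookkeeping issue. However, two of your claims do not hold up.

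First, the non-reentry assertion is \emph{not} ``immediate from combinatorial minimality.'' Your shortcut argument (replace the intermediate arc by a cheaper arc inside $E$ via Lemma \ref{Lem:gammainsidesingleK}) only disposes of the case where $\gamma$ stays in $K$ between the two visits to $E$. If $\gamma$ crosses $W$ into $K'$ and back between its two visits, there is no a priori reason the direct path through $E$ is cheaper: the vertical displacement $d$ inside $E$ could be arbitrarily large, so $\eta d$ need not beat $2H$ plus the $K'$-leg. The paper proves non-reentry \emph{last}, as a corollary of the two-crossing statement: writing $E'_1 = E, \ldots, E'_{n'} = E_1$ and $E''_1 = E_2, \ldots, E''_{n''} = E$ for the $K$-columns $\gamma$ visits before and after the $K'$-excursion, these are minimal chains by Lemma \ref{Lem:gammainsidesingleK}, and the two-crossing bound forces $E_1 \neq E_2$ and $\dist^H_K(E_1,E_2) > H$; the tree property of $\mathcal{E}_K$ then makes the three chains $L_1$, $L$, $L_2$ incompatible, which is the contradiction. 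Your outline needs to route through that argument rather than a direct length comparison.

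Second, ruling out $\geq 3$ crossings is \emph{not} ``exactly as in the proof of Lemma \ref{Lem:gammainsidetwoKKs}.'' That proof used the non-parallel-strips geometry of case (C) — two columns adjacent to $W$ from opposite sides always share an adjacent pair of cells, and Lemma \ref{Lem:QQestimatesatW}(c) controls the resulting vertical offsets. In case (B), strips are parallel and that argument is unavailable. The paper instead constructs a ``parallelogram'' cell $Q^* \subset K$ together with an adjacent cell $Q^{*\prime} \subset K'$, and invokes Lemma \ref{Lem:QQestimatesatW}(g) to bound $\dist^H_{K'}(Q^{*\prime}, Q'_3)$ and $\dist^V_{K'}(Q^{*\prime}, Q'_3)$ in terms of $\dist^H_{K'}(Q'_1,Q'_2)$ and $\dist^V_{K'}(Q'_1,Q'_2)$; the length comparison then yields $2H < C_0 + \eta C_0$. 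You would need this parallelogram device, not the case-(C) intersection trick, for this step.

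The one- and two-crossing estimates in your plan are on the right track, provided you first make the paper's initial reduction (restrict attention to segments whose columns are all adjacent to $W$) and, in the one-crossing case, supply the length comparison showing $E_i$ adjacent to $E'_{i'}$ forces $i=n$, $i'=1$ — that is where Lemma \ref{Lem:QQestimatesatW}(e) actually enters.
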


\begin{proof}
First note that every subsegment of $\gamma$ that does not intersect $W$ and whose endpoints lie in columns that are adjacent to $W$, stays within columns that are adjacent to $W$ and does not reenter any column.
So we can restrict our attention to the case in which $\gamma$ intersects only columns that are adjacent to $W$.

Assume first that we are in the setting of assertion (b), meaning that $\gamma$ intersects $W$ exactly once and assume without loss of generality that $\gamma(0) \in K$.
Let $E_1, \ldots, E_n \in \mathcal{E}_K$ and $E'_1, \ldots, E'_{n'} \in \mathcal{E}_{K'}$ be the columns that $\gamma$ intersects in that order.
Then both sequences of columns form minimal chains, which move along $W$, and $E_n, E'_1$ are adjacent across $W$.
We now show that $E_i$ can only be adjacent to $E'_{i'}$ if $i = n$ and $i' = 1$.
This will also imply that the directions of both minimal chains agree.
Assume that this was not the case and assume without loss of generality that $E_i$ is adjacent to $E'_{i'}$ for some $i < n$ and $i' \geq 1$ (otherwise we reverse the orientation of $\gamma$).
Let $Q_1 \subset E_1$ be the cell that contains $\gamma(0)$, $Q_2 \subset E_n$, $Q_3 \subset E'_1$ the cells that $\gamma$ intersects right before and after $W$ and $Q_4 \subset E_{i'}$ a cell that intersects $\gamma$.
Choose moreover a cell $Q^* \subset E_i$ which is adjacent to $Q_4$.
By Lemma \ref{Lem:QQestimatesatW}(e)
\[ \dist^V_K (Q_2, Q^*) < \dist^V_{K'} (Q_3, Q_4) + C_0 \dist^H_K (Q_2, Q^*) + C_0. \]
So
\begin{multline*}
 \dist^V_K (Q_1, Q^*) \leq \dist^V_K (Q_1, Q_2) + \dist^V_K (Q_2, Q^*) \\
  < \dist^V_K (Q_1, Q_2) + \dist^V_{K'} (Q_3, Q_4) + C_0 \dist^H_K (Q_2, Q^*) + C_0.
\end{multline*}
Hence
\begin{multline*}
 \dist(Q_1, Q_4) \leq \dist(Q_1, Q^*) + H < \dist^H_K (Q_1, Q^*) + \eta \dist^V_K (Q_1, Q_2) \\ + \eta \dist^V_{K'} (Q_3, Q_4) + \eta C_0 \dist^H_K (Q_2, Q^*) + \eta C_0 + H.
\end{multline*}
On the other hand, the minimizing property of $\gamma$ yields
\begin{multline*}
 \dist(Q_1, Q_4) = \dist(Q_1, Q_2) + H + \dist (Q_3, Q_4) \\ \geq \dist^H_K (Q_1, Q_2) + \eta \dist^V_K (Q_1, Q_2) + \eta \dist^V_{K'} (Q_3, Q_4) + H
\end{multline*}
Combining both inequalities yields
\[ \dist^H_K (Q_1, Q_2) < \dist^H_K (Q_1, Q^*) + \eta C_0 \dist^H_K (Q_2, Q^*) + \eta C_0. \]
Since $\dist^H_K (Q_1, Q_2) = n-1$, $\dist^H_K (Q_1, Q^*) = i-1$ and $\dist^H_K (Q_2, Q^*) = n-i \geq 1$, we obtain
\[ n -1 < i - 1 + \eta C_0 (n-i) + \eta C_0. \]
This yields a contradiction if $\eta < (2C_0)^{-1}$.
So assertion (b) holds.

Assume next that we are in the setting of assertion (c), meaning that $\gamma$ intersects $W$ exactly twice and that $\gamma(0), \gamma(1) \in K$.
Define $E_1, E_2 \in \mathcal{E}_K$ as in the statement of the Lemma.
We now establish the bound $\dist^H_K (E_1, E_2) > H$ (for sufficiently small $\eta$ and large $H$).
By assertion (b) applied to subsegments of $\gamma$, we find that $\gamma$ cannot intersect any column of $K$ that is adjacent to $W$ and lies strictly between $E_1, E_2$.
Let now $Q_1 \subset E_1$ and $Q_2 \subset E_2$ be the cells that $\gamma$ intersects right before and after $W$ and let $Q'_1, Q'_2 \subset K'$ be the cells that $\gamma$ intersects right after $Q_1$ and right before $Q_2$.
Then
\[ \dist(Q_1, Q_2) = 2H + \dist(Q'_1, Q'_2) \geq 2H  + \eta \dist^V_{K'} (Q'_1, Q'_2). \]
By Lemma \ref{Lem:QQestimatesatW}(e)
\[  \dist^V_K (Q_1, Q_2) < \dist^V_{K'} (Q'_1, Q'_2) + C_0 \dist^H_K (Q_1, Q_2) + C_0. \]
So
\[ \dist(Q_1, Q_2) < \dist^H_K (Q_1, Q_2)  + \eta \dist^V_{K'} (Q'_1, Q'_2) + \eta C_0 \dist^H_K (Q_1, Q_2) + \eta C_0. \]
Hence
\[ 2 H < (1 + \eta C_0) \dist^H_K (Q_1, Q_2) + \eta C_0. \]
Assertion (c) follows for $H > 10$ and $\eta < (2 C_0)^{-1}$.

It remains to show assertion (a).
To do this, we first show that $\gamma$ cannot intersect $W$ more than twice.
Assume it does.
By passing to a subsegment and possibly interchanging the roles of $K$ and $K'$, we can assume that $\gamma$ intersects $W$ exactly three times and that $\gamma(0) \in K$.
Let $Q_1, Q_2, Q_3 \subset K$ be the cells in $K$ that $\gamma$ intersects before the first, after the second and before the third intersection with $W$ and let $Q'_1, Q'_2, Q'_3 \subset K'$ be the cells of $K'$ that $\gamma$ intersects after the first, before the second and after the third intersection with $W$.
Then
\begin{multline} \label{eq:distQ1Qs3CaseB}
 \dist(Q_1, Q'_3) = 3H + \dist^H_{K'} (Q'_1, Q'_2) + \eta \dist^V_{K'} (Q'_1, Q'_2) \\
  + \dist^H_K (Q_2, Q_3) + \eta \dist^V_K (Q_2, Q_3).
\end{multline}

Let $Q^* \subset K$ be the cell that is adjacent to $W$ and that is located relatively to $Q_1, Q_2, Q_3$ such that $Q_1, Q_2, Q_3, Q^*$ forms a ``parallelogram'', i.e. 
\begin{multline*}
\dist^H_K (Q_1, Q^*) = \dist^H_K (Q_2, Q_3), \qquad \dist^V_K (Q_1, Q^*) = \dist^V_K (Q_2, Q_3), \\ 
\dist^H_K (Q^*, Q_3) = \dist^H_K (Q_1, Q_2), \qquad \dist^V_K (Q^*, Q_3) = \dist^V_K (Q_1, Q_2)
\end{multline*}
in an oriented sense.
Let moreover $Q^{* \prime} \subset K'$ be a cell that is adjacent to $Q^*$.
Then by Lemma \ref{Lem:QQestimatesatW}(g)
\[ \dist^H_{K'} (Q^{* \prime}, Q'_3 ) < \dist^H_{K'} (Q'_1, Q'_2) + C_0, \quad \dist^V_{K'} (Q^{* \prime}, Q'_3 ) < \dist^V_{K'} (Q'_1, Q'_2) + C_0. \]
So
\begin{multline*}
\dist(Q_1, Q'_3) \leq \dist(Q_1, Q^*) + \dist (Q^*, Q^{* \prime}) + \dist (Q^{* \prime}, Q'_3) \\
\leq \dist^H_K (Q_1, Q^*) + \eta \dist^V_K (Q_1, Q^*) + H + \dist^H_{K'} (Q^{* \prime}, Q'_3) + \eta \dist^V_{K'} (Q^{* \prime}, Q'_3) \\
< H + \dist^H_K (Q_2, Q_3) + \eta \dist^V_K (Q_2, Q_3) + \dist^H_{K'} (Q'_1, Q'_2) + C_0 \\ + \eta \dist^V_{K'} (Q'_1, Q'_2) + \eta C_0.
\end{multline*}
Together with (\ref{eq:distQ1Qs3CaseB}) this yields
\[ 2H < C_0 + \eta C_0 \]
and hence a contradiction for $H > C_0$ and $\eta < 1$.

Next, we show that $\gamma$ does not reenter any column.
If $\gamma$ does not intersect $W$, then this fact is a consequence of Lemma \ref{Lem:gammainsidesingleK}.
The same is true if $\gamma$ intersects $W$ exactly once, by passing to a subsegment.
So it remains to consider the case in which $\gamma$ intersects $W$ exactly twice.
Assume that the assertion was wrong.
By passing to a subsequent, we can assume that there is a column $E \in \mathcal{E}_K$ such that $\gamma(0), \gamma(1) \in E$ and that $\gamma$ intersects $W$ exactly twice.
Let $E_1, E_2 \in \mathcal{E}_K$ be the columns that $\gamma$ intersects right before and after $W$, as in the last part of the Lemma.
Let moreover $E'_1 = E, \ldots, E'_{n'} = E_1 \in \mathcal{E}_K$ and $E''_{1} = E_2, \ldots, E''_{n''} = E \in \mathcal{E}_K$ be the columns of $K$ that $\gamma$ intersects in that order.
By Lemma \ref{Lem:gammainsidesingleK} we know that $L_1 = (E'_1, \ldots, E'_{n'})$ and $L_2 = (E''_{n''}, \ldots, E''_1)$ form minimal chains between $E, E_1$ and $E, E_2$, respectively.
Let $L$ be the minimal chain between $E_1$ and $E_2$.
By assertion (c) of this Lemma $L_1, L$ and $L_2, L$ only intersect in $E_1$ and $E_2$, respectively.
So $L_1 \cup L$ is a minimal chain between $E$ and $E_2$.
By the tree property of $\mathcal{E}_K$, we must have $L_1 \cup L = L_2$, which is impossible, since $L_2$ intersects $L$ only in $E_2$.
This finishes the proof.
\end{proof}

The following Proposition and its proof are similar to Proposition \ref{Prop:characterizationminimizinggamma}.

\begin{Proposition} \label{Prop:characteriztionminimizinggammaCaseB}
Assume that $M$ satisfies condition (B).
There are constants $\eta^* > 0$ and $H^* < \infty$ such that if $\eta \leq \eta^*$ and $H \geq H^*$, the following holds:

Consider a combinatorially minimizing arc $\gamma : [0,1] \to \td{M}$.
Then
\begin{enumerate}[label=(\alph*)]
\item For every column $E \in \mathcal{E}$, the preimage $\gamma^{-1} (E)$ is an interval.
\item $\gamma$ intersects every wall $W \in \mathcal{W}$ at most twice.
Assume that $K, K' \in \mathcal{K}$ are two chambers that are adjacent to a wall $W \in \mathcal{W}$ from either side.
Then
\begin{enumerate}[label=(b\arabic*)]
\item If $\gamma$ intersects $W$ exactly once then the following holds:
Assume that $\gamma(0)$ lies on the same side of $W$ as $K$.
Let $E \in \mathcal{E}_{K}$ be the first column that is intersected by $\gamma$ and that is adjacent to $W$.
Then for every column $E^* \in \mathcal{E}_K$ that $\gamma$ intersects before $E$, the minimal chain between $E^*$ and $E$ intersects $W$ in at most two columns.
\item If $\gamma$ intersects $W$ exactly twice and its endpoints lie on the same side of $W$ as $K$, then $\gamma$ stays within $K'$ between both intersections and the columns $E_1, E_2 \in \mathcal{E}_K$ that $\gamma$ intersects right before and after $W$ satisfy $\dist^H_K (E_1, E_2) > H$.
Moreover, $\gamma$ does not intersect any column of $K$ that is adjacent to $W$ and that horizontally lies strictly between $E_1$ and $E_2$.
\item If $\gamma$ intersects two columns $E_1, E_2 \in \mathcal{E}$ that are both adjacent to $W$, then $\gamma$ stays within $K \cup K'$ in between $E_1, E_2$ and only intersects columns that are adjacent to $W$.
\end{enumerate}
\item Consider a chamber $K \in \mathcal{K}$ and let $E_1, \ldots, E_n \in \mathcal{E}_K$ be the columns of $K$ that $\gamma$ intersects in that order.
Then there are columns $E^*_1, \ldots, E^*_n \in \mathcal{E}_K$ such that assertions (c1)--(c6) of Proposition \ref{Prop:characterizationminimizinggamma} hold.
\end{enumerate}
\end{Proposition}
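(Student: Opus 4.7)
The plan is to argue by induction on the combinatorial length $|\gamma|$, mirroring the proof of Proposition \ref{Prop:characterizationminimizinggamma}, with Lemma \ref{Lem:gamminKKsCaseB} replacing Lemma \ref{Lem:gammainsidetwoKKs} as the two-chamber input. The base case $|\gamma|=0$ is trivial, so I assume all three assertions have been established for shorter minimizing curves. In the inductive step I would establish assertion (b) first, then (c1)--(c6), and finally assertion (a).

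For assertion (b), I would first show that $\gamma$ intersects any wall $W$ at most twice: any third crossing would, together with (b2) of the induction hypothesis applied to two-crossing subsegments, force $\gamma$ to remain in $K \cup K'$ between its first and last crossing of $W$, contradicting the at-most-twice conclusion of Lemma \ref{Lem:gamminKKsCaseB}. To prove (b2), I would apply (b1) and (a) of the induction hypothesis to the two subsegments of $\gamma$ lying in $K$ and to the middle subsegment in $K'$ to force the latter to remain in $K'$; once $\gamma$ is confined to $K \cup K'$ between its two crossings, the horizontal-gap bound $\dist^H_K(E_1,E_2) > H$ and the absence of intermediate adjacent-to-$W$ columns read off directly from the second part of Lemma \ref{Lem:gamminKKsCaseB}. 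For (b1), I would argue as in the original: if the minimal chain from $E^*$ to the first wall-adjacent column $E$ contained three columns adjacent to $W$, applying (c5)--(c6) of the induction and (b2) of the step just proved would produce three consecutive chain columns simultaneously adjacent to two distinct walls, contradicting Lemma \ref{Lem:EEinKisatree} provided $H$ is large. Assertion (b3) is new in form but easier: if $\gamma$ meets two columns $E_1,E_2$ adjacent to $W$, the subsegment of $\gamma$ between them cannot cross any wall other than $W$ by (b1) and (b2) of the induction, and then Lemma \ref{Lem:gamminKKsCaseB} confines this subsegment to $K \cup K'$ and to columns adjacent to $W$.

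For assertion (c), after reducing to $\gamma(0),\gamma(1)\in K$ via the induction hypothesis on proper subsegments, I would split into two subcases according to whether $E_{n-1}$ lies on the minimal chain between $E_1$ and $E_n$ in $\mathcal{E}_K$. If it does, applying the induction hypothesis to the subsegment ending at $E_{n-1}$ produces $E^*_1,\ldots,E^*_{n-1}$, and the choice $E^*_n = E_n$ yields (c1)--(c6) at once. If $E_{n-1}$ does not lie on the minimal chain between $E_1,E_n$, then (b2) combined with the argument in the original proof forces $E_{n-1}$ and $E_n$ to be non-adjacent and $\gamma$ to cross some wall $W \subset \partial K$ twice between them. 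The tree property in Lemma \ref{Lem:EEinKisatree} supplies a unique column $E^*$ common to the three minimal chains between $E_1,E_{n-1},E_n$; $E^*$ is adjacent to $W$, and (b1) applied to a suitable subsegment forces $\dist^H_K(E^*,E_{n-1})\le 1$. Setting $E^*_{n-1}=E^*$, $E^*_n=E_n$ and taking $E^*_1,\ldots,E^*_{n-2}$ from the induction, assertions (c1)--(c3) are immediate, and (c4), (c6) hold with the walls $W$ and $W'$, the latter supplied by the induction on the shorter subsegment.

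The main obstacle I anticipate is the bookkeeping in the second subcase of (c): ensuring that $W$ and $W'$ are distinct, that $E^*_{n-1},E^*_n$ lie in the correct order on the minimal chain between $E_1$ and $E_n$, and that (c5)--(c6) remain consistent at the junction. The structural difference with case (C) is that in case (B) the strips $E\cap W$ from either side of $W$ are parallel, so Lemma \ref{Lem:QQestimatesatW}(e), (g) governs the cross-wall estimates rather than the transverse parts (b)--(d); I would need to verify that these parallel-strip estimates suffice to rule out the pathological parallelogram configurations that (b)--(d) handled in case (C). Finally, assertion (a) would follow from (c) together with the no-reentry property of Lemma \ref{Lem:gamminKKsCaseB}: a hypothetical reentry of a column $E \subset K$ would, between its two visits, either remain in $K$ (excluded by Lemma \ref{Lem:gammainsidesingleK}) or exit through some wall $W$ and return, and (b2) applied to each such two-crossing excursion, combined with (c) applied to the global column sequence in $K$, places the return in a column of $\mathcal{E}_K$ horizontally far from $E$, giving the contradiction.
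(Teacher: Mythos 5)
Your overall strategy — induction on $|\gamma|$ with Lemma~\ref{Lem:gamminKKsCaseB} replacing Lemma~\ref{Lem:gammainsidetwoKKs} as the two-chamber input, mirroring the proof of Proposition~\ref{Prop:characterizationminimizinggamma} — is exactly the paper's, and your handling of (b1), (b2), (c) and (a) is close to what the paper does. There is, however, a genuine gap in your proposed order and justification for assertion (b3). You say (b3) is ``easier'' and plan to prove it within part (b), before (c), arguing that ``the subsegment of $\gamma$ between $E_1, E_2$ cannot cross any wall other than $W$ by (b1) and (b2) of the induction.'' That claim is not established by (b1)/(b2) of the induction hypothesis: those assertions constrain what happens at each individual wall but do not, by themselves, preclude excursions across some $W'\subset\partial K$, $W'\neq W$. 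The paper proves (b3) \emph{last}, after (c) and (a), and its argument essentially requires (c) of the \emph{current} step: applying (c) produces the columns $E^{\prime*}_1,\ldots,E^{\prime*}_n$ on the minimal chain between $E_1$ and $E_2$ (all adjacent to $W$), and if $\gamma$ crossed some $W'$ twice between $E'_i$ and $E'_{i+1}$, every column between $E^{\prime*}_i$ and $E^{\prime*}_{i+1}$ would be adjacent to both $W$ and $W'$, contradicting the last statement of Lemma~\ref{Lem:EEinKisatree}. You should move (b3) after (c) and use this argument.

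A smaller point in the second subcase of (c): you propose to get $\dist^H_K(E^*,E_{n-1})\leq 1$ by ``(b1) applied to a suitable subsegment,'' which is the case-(C) argument. But (b1) in the case-(B) proposition is weaker than its case-(C) counterpart (it only bounds the number of $W$-adjacent columns on the chain between the \emph{first} $W$-adjacent column intersected and a column intersected \emph{earlier}; there is no uniqueness statement), and $E^*$ need not even be intersected by $\gamma$, so (b1) does not obviously apply. The paper instead observes that both $W$ and $W'$ are adjacent to every column on the minimal chain between $E_{n-1}$ and $E^*$ (or between $E_{n-1}$ and $E^*_{n-2}$, whichever is shorter) and concludes $\dist^H_K(E^*,E_{n-1})=1$ directly from the last part of Lemma~\ref{Lem:EEinKisatree}, which says at most one wall is adjacent to two columns at horizontal distance $\geq 2$. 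This is the substitute in case (B) for the stronger (b1) available in case (C).
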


\begin{proof}
We use again induction on the combinatorial length $|\gamma|$ of $\gamma$.
Assume that $|\gamma| > 0$, since for $|\gamma|=0$ there is nothing to prove.
The first part of assertion (b) follows as in the proof or Proposition \ref{Prop:characterizationminimizinggamma}.

We now establish assertion (b1).
So assume that $\gamma$ intersects $W$ exactly once and that $\gamma(0)$ lies on the same side of $W$ as $K$ and consider the columns $E, E^* \in \mathcal{E}_K$.
Note that it suffices to show that the second or third last element of the minimal chain between $E^*$ and $E$ is not adjacent to $W$.
Apply assertions (b) and (c) of the induction hypothesis to the subsegment of $\gamma$ between $E^*$ and $E$.
We obtain sequences $E_1, \ldots, E_n$ and $E^*_1, \ldots, E^*_n$ with $E_1 = E^*_1 = E^*$ and $E_n = E^*_n = E$.
If $E_{n-1}, E_n$ are adjacent, then $E_{n-1} = E^*_{n-1}$ lies on the minimal chain between $E^*$ and $E$ and by assumption $E_{n-1}$ cannot be adjacent to $W$; so we are done.
If $E_{n-1}, E_n$ are not adjacent, then $\gamma$ intersects a wall $W' \subset \partial K$, $W' \neq W$ twice between $E_{n-1}, E_n$.
All columns on the minimal chain between $E^*_{n-1}$ and $E_n$ are adjacent to $W'$.
By Lemma \ref{Lem:EEinKisatree} at most $2$ of those columns can also be adjacent to $W$.

Assertion (b2) follows from assertion (b1) of the induction hypothesis and Lemma \ref{Lem:gamminKKsCaseB} by passing to subsegments of $\gamma$ that intersect $W$ exactly once and whose endpoints are contained in columns adjacent to $W$.

Next, we establish assertions (c) and (a).
It suffices to consider the case in which $\gamma(0), \gamma(1) \in K$.
Let $E_1, \ldots, E_n \in \mathcal{E}_K$ be as defined in the proposition.
If $n \leq 2$, then we are done using assertion (b2); so assume $n \geq 3$.
If $E_{n-1}$ lies on the minimal chain between $E_1$ and $E_n$, then we are done as in the proof of Proposition \ref{Prop:characterizationminimizinggamma}.
So assume that $E_{n-1}$ does not lie on the minimal chain between $E_1, E_n$.

We show that $E_{n-1}, E_n$ cannot be adjacent.
Otherwise, as in the proof of Proposition \ref{Prop:characterizationminimizinggamma}, $\gamma$ intersects a wall $W \subset \partial K$ twice between $E_{n-2}$ and $E_{n-1}$ and the columns $E_{n-2}, E^*_{n-2}, E_n, E_{n-1}$ lie along $W$ in that order.
This contradicts assertion (b2).

So there is a wall $W \subset \partial K$ that is adjacent to both $E_{n-1}, E_n$ and $\gamma$ crosses $W$ twice between those two columns.
We now proceed as in the proof of Proposition \ref{Prop:characterizationminimizinggamma}, but we have to be careful whenever we make use of assertion (b).
As in this proof, we can find a column $E^* \in \mathcal{E}_K$ that lies on the three minimizing chains between $E_{n-1}, E_n$ and $E_1, E_{n-1}$ and $E_1, E_n$ and $E^* \neq E_{n-1}$.
We also know that $E_{n-2}$ cannot be adjacent to to $E_{n-1}$, since otherwise it would lie on the minimal chain between $E_{n-1}$ and $E^*$ along $W$, in contradiction to assertion (b2).
So by assertion (c6) of the induction hypothesis $E_{n-2}, E^*_{n-2}, E_{n-1}$ are adjacent to a wall $W' \subset \partial K$ such that $\gamma$ intersects $W'$ twice between $E_{n-2}, E_{n-1}$.
This implies $W' \neq W$ by assertion (b).
Now both $W$ and $W'$ are adjacent to all columns on the minimal chain between $E_{n-1}, E^*$ or between $E_{n-1}, E^*_{n-2}$, whichever is shorter.
So by Lemma \ref{Lem:EEinKisatree} we must have $\dist^H_K (E^*, E_{n-1}) = 1$.
Assertion (c1)--(c6) now follow as in the proof of Proposition \ref{Prop:characterizationminimizinggamma}.

Now for assertion (a), we may assume that $\gamma(0), \gamma(1) \in E \in \mathcal{E}_K$ in view of the induction hypothesis.
Then assertion (c) implies that $\gamma$ is fully contained in $E$.

Finally, we establish assertion (b3).
In view of the induction hypothesis it suffices to consider the case in which $E_1, E_2 \in \mathcal{E}_K$ and in which $\gamma$ does not intersect $W$.
Apply assertion (c) to obtain sequences $E'_1, \ldots, E'_n$ and $E^{\prime *}_1, \ldots, E^{\prime *}_n$ with $E^{\prime *}_1 = E'_1 = E_1$ and $E^{\prime *}_n = E'_n = E_2$.
It follows that all columns $E^*_1, \ldots, E^*_n$ are adjacent to $W$.
If $\gamma$ crossed a wall $W' \subset \partial K$ twice in between some $E'_i, E'_{i+1}$, then all columns between $E^{\prime *}_i, E^{\prime *}_{i+1}$ would be adjacent to $W'$ and $W$.
This is impossible by Lemma \ref{Lem:EEinKisatree}.
\end{proof}

The next Lemma is an analog of Lemma \ref{Lem:interchangeorderinK}.
Note that in the setting of condition (B), we don't need to work inside a single chamber.
This fact will later compensate us for the lack of an analog for Lemma \ref{Lem:distg1g2afterW}.

\begin{Lemma} \label{Lem:interchangeorderinKCaseB}
There are constants $\eta^* > 0$ and $H^* < \infty$ such that if $\eta \leq \eta^*$ and $H \geq H^*$,  the following holds:

Let $E^\circ_1, E^\circ_2 \in \mathcal{E}$ be two columns and $Q_1, \ov{Q}_1 \subset E^\circ_1$, $Q_2, \ov{Q}_2 \subset E^\circ_2$ cells such that the vertical orders of $Q_1, \ov{Q}_1$ and $Q_2, \ov{Q}_2$ are opposite to each other.
Let $\gamma, \ov\gamma : [0,1] \to \td{M}$ be minimizing arcs from $Q_1$ to $Q_2$ and from $\ov{Q}_1$ to $\ov{Q}_2$.
Then we can find cells $Q', \ov{Q}' \in \mathcal{Q}$ that intersect $\gamma, \ov\gamma$ and such that $\dist(Q', \ov{Q}') < 3H$.
\end{Lemma}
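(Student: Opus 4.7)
The plan is to mimic the proof of Lemma \ref{Lem:interchangeorderinK}, exploiting the special feature of case (B) that the $S^1$-fibrations coming from adjacent Seifert pieces agree across each shared wall. Consequently, the $\IZ$-action by vertical translation extends to a deck-transformation action on all of $\td{M}$ that preserves cells, columns, walls and combinatorial distance, and the notion of ``vertically aligned'' becomes globally meaningful, not merely within a single chamber. This is exactly the extra structure we are promised compensates for the absence of an analog of Lemma \ref{Lem:distg1g2afterW}.

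First I would apply Proposition \ref{Prop:characteriztionminimizinggammaCaseB}(c) to $\gamma$ in every chamber $K$ that it enters, obtaining a visited sequence $E_1,\ldots,E_n\in\mathcal{E}_K$ together with its straightening $E^*_1,\ldots,E^*_n$ on the minimal chain $L_K$ between $E_1$ and $E_n$ in $K$. Using Proposition \ref{Prop:characteriztionminimizinggammaCaseB}(b1) together with the tree property of chambers (Lemma \ref{Lem:KKistree}) and the tree property of columns inside each chamber (Lemma \ref{Lem:EEinKisatree}), these local minimal chains glue across the walls crossed by $\gamma$ to give a single global minimal chain $L$ of columns from $E^\circ_1$ to $E^\circ_2$. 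The analogous construction for $\ov\gamma$ produces a chain $\ov L$ sharing the same endpoints.

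Exactly as in the proof of Lemma \ref{Lem:interchangeorderinK}, I would then assemble a subset $S$ consisting of every cell of $L$ that $\gamma$ actually meets, together with, at every visited column $E_i$, the cells of $E^*_i$ that are horizontally aligned with cells of $E_i$ met by $\gamma$. In case (B) the parallel-strip structure of the cell pattern across each wall (the first part of Lemma \ref{Lem:QQestimatesatW}) ensures that these horizontal projections fit together coherently across chamber boundaries, so that $S$ is connected in the cell-adjacency relation and contains both $Q_1$ and $Q_2$. Constructing $\ov S$ analogously from $\ov\gamma$, the assumption of opposite vertical orders at $E^\circ_1$ and $E^\circ_2$ — which in case (B) is a statement about a single \emph{global} vertical coordinate — forces the connected sets $S$ and $\ov S$ to cross on $L$, producing a common cell $Q^\circ\in S\cap\ov S$.

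From $Q^\circ$ I would then retrieve actual cells $Q'\subset\gamma$ and $\ov Q'\subset\ov\gamma$ close to $Q^\circ$, following the same case analysis as in Lemma \ref{Lem:interchangeorderinK}: $Q^\circ$ itself, a horizontally aligned neighbor in some $E_i$, or a cell in an adjacent chamber reached through a wall that $\gamma$ crosses twice in the sense of Proposition \ref{Prop:characteriztionminimizinggammaCaseB}(b2). Each of these retrieval steps costs at most a constant in both $\dist^H_K$ and $\dist^V_K$, controlled by parts (a) and (e) of Lemma \ref{Lem:QQestimatesatW}, and together with the bound $\dist^H_K(E_1,E_2)>H$ from Proposition \ref{Prop:characteriztionminimizinggammaCaseB}(b2) this yields $\dist(Q',\ov Q')<3H$ once $\eta$ is small enough and $H$ large enough. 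The main obstacle I expect is justifying that the thickening $S$ remains connected across walls, which is precisely the step that in case (C) required the intricate Lemma \ref{Lem:distg1g2afterW} and in case (B) is replaced by the cleaner observation that the vertical direction is defined globally on $\td M$.
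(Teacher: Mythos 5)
Your high-level plan (use Proposition \ref{Prop:characteriztionminimizinggammaCaseB}(c), build thickened sets $S$, $\ov S$ along chains, find a crossing) shares the flavor of the paper's proof, but the central step is not justified, and the paper actually proceeds quite differently. Your argument builds one ``global chain'' $L$ for $\gamma$ and a second one $\ov L$ for $\ov\gamma$, with $S\subset L$ and $\ov S\subset\ov L$ roughly, and then claims the opposite vertical orders at the endpoints ``force the connected sets $S$ and $\ov S$ to cross on $L$.'' But $L$ and $\ov L$ can diverge horizontally inside an intermediate chamber, and a global vertical coordinate gives you no control on that divergence: two connected vertical--monotone sets that swap vertical order need not meet if they are allowed to drift arbitrarily far apart horizontally. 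Inside a single chamber the tree property of $\mathcal{E}_K$ (Lemma \ref{Lem:EEinKisatree}) does force $L$ and $\ov L$ to share a sub-chain and stay within horizontal distance $\leq 3$ of it, which is exactly what makes the argument work in Lemma \ref{Lem:interchangeorderinK}; but the collection $\mathcal{E}$ of \emph{all} columns is not a tree, so there is no analogous global control, and the phrase ``single global minimal chain'' is not even well-defined. Your proposal never addresses how to bound the horizontal gap between $L$ and $\ov L$ away from the endpoints.

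The paper closes this gap with a localization step that your sketch omits. It first proves a case-(B) analogue of Lemma \ref{Lem:distg1g2afterW}: for every wall $W_i$ crossed exactly once (and both $\gamma,\ov\gamma$ cross the same walls once, in the same order, by the tree structure of $\mathcal{K}$), the first/last columns of $\gamma$ and of $\ov\gamma$ adjacent to $W_i$ are within horizontal distance $3$ --- so your claim that the ``intricate Lemma \ref{Lem:distg1g2afterW}'' is entirely dispensed with in case (B) is not accurate. With that control in hand, the paper then performs a discrete intermediate-value argument on the finite sequence $W_1,\dots,W_h$: the vertical order of the two curves must first flip either \emph{across} some $W_i$ (handled via Lemma \ref{Lem:gamminKKsCaseB} and Proposition \ref{Prop:characteriztionminimizinggammaCaseB}(b3)) or \emph{between} two consecutive walls, i.e.\ inside a single chamber $K$, where the argument of Lemma \ref{Lem:interchangeorderinK} (tree property of $\mathcal{E}_K$, overlapping chains $L,\ov L$, horizontally aligned cells) applies verbatim. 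To repair your proposal you would need to add precisely this reduction from the global picture to a single chamber or wall; as written, the existence of the common cell $Q^\circ$ does not follow.
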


\begin{proof}
Consider first a wall $W \in \mathcal{W}$ that intersects $\gamma$ (and hence also $\ov\gamma$) exactly once.
Let $K, K' \in \mathcal{K}$ be the chambers that are adjacent to $W$ from either side in such a way that $\gamma(0)$ and $\ov\gamma(0)$ lie on the same side of $W$ as $K$.
Let $E \in \mathcal{E}_K$ be the first column on $\gamma$ that is adjacent to $W$ and choose $\ov{E} \in \mathcal{E}_K$ analogously.
We argue similarly as in the proof of Lemma \ref{Lem:distg1g2afterW} that $\dist^H_K (E, \ov{E}) \leq 3$.
Let $E^* \in \mathcal{E}_K$ be the first column on $\gamma$ inside $K$ and define $\ov{E}^*$ analogously.
Then either $E^* = \ov{E}^* = E^\circ_1$ or $E^\circ_1 \not\in \mathcal{E}_K$.
In both cases there is a wall $W^* \subset \partial K$, $W^* \neq W$ that is adjacent to both $E^*$ and $\ov{E}^*$.
Let $E^{**} \in \mathcal{E}_K$ be the last column on the minimal chain between $E$ and $E^*$ that is adjacent to $W$ and define $\ov{E}^{**} \in \mathcal{E}_K$ analogously.
By Proposition \ref{Prop:characteriztionminimizinggammaCaseB}(b1) we have $\dist^H_K (E, E^{**}), \dist^H_K (\ov{E}, \ov{E}^{**}) \leq 1$.
It now follows as in the proof of Lemma \ref{Lem:distg1g2afterW} that $\dist^H_K (E^{**}, \ov{E}^{**}) \leq 1$ and hence $\dist^H_K (E, \ov{E}) \leq 3$ (observe that this part of the proof only makes use of the tree property of $\mathcal{E}_K$ from Lemma \ref{Lem:EEinKisatree}).

Let now $W_1, \ldots, W_h$ be all the walls that $\gamma$ intersects exactly once in this order.
Then also $\ov\gamma$ intersects each of these walls exactly once in this order.
For each $i = 1, \ldots, h$ let $E'_i \in \mathcal{E}$ be the first and $E''_i$ the last column on $\gamma$ that is adjacent to $W_i$.
Define $\ov{E}'_i$ and $\ov{E}''_i$ accordingly.
By the last paragraph, we obtain that $E'_i, \ov{E}'_i$ and $E''_i, \ov{E}''_i$ have horizontal distance $\leq 3$ in the chamber in which they are contained (the bound on the horizontal distance between $E''_i$ and $\ov{E}''_i$ can be obtained by reversing the orientation of $\gamma$ and $\ov\gamma$).
Choose cells $Q'_i \subset E'_i$, $Q''_i \subset E''_i$ or $\ov{Q}'_i \subset \ov{E}'_i$, $\ov{Q}''_i \subset \ov{E}''_i$ that intersect $\gamma$ or $\ov\gamma$, respectively.

We first consider the case in which there is some $i \in \{ 1, \ldots, h \}$ such that the vertical orders of $Q'_i, \ov{Q}'_i$ and $Q''_i, \ov{Q}''_i$ are different.
Observe that by Proposition \ref{Prop:characteriztionminimizinggammaCaseB}(b3) the arc $\gamma$ only intersects cells adjacent to $W_i$ between $Q'_i$ and $Q''_i$; the same is true for $\ov\gamma$.
Let $S \subset \td{M}$ be the union of all cells that $\gamma$ intersects between $Q'_i, Q''_i$ and define $\ov{S}$ accordingly.
Using Lemma \ref{Lem:gamminKKsCaseB} we find that either $S \cap W_i$ and $\ov{S} \cap W_i$ intersect or there is a cell $Q' \in \mathcal{Q}$ on $\gamma$ with $\dist(Q', \ov{Q}'_i) \leq 3 + H$ or $\dist(Q', \ov{Q}''_i) \leq 3 + H$ or there is a cell $\ov{Q}' \in \mathcal{Q}$ on $\ov\gamma$ with $\dist(\ov{Q}', Q'_i) \leq 3+ H$ or $\dist(\ov{Q}', Q''_i) \leq 3+H$.
In all these cases we are done.

So assume from now on that the vertical orders of $Q'_i, \ov{Q}'_i$ and $Q''_i, \ov{Q}''_i$ are the same for all $i = 1, \ldots, h$.
Choose $i \in \{ 1, \ldots, h \}$ minimal such that the vertical order of $Q'_i, \ov{Q}'_i$ differs from that of $Q_1, \ov{Q}_1$.
If there is no such $i$, then the vertical orders of $Q'_h, \ov{Q}'_h$ and $Q_2, \ov{Q}_2$ are opposite and we can get rid of this case by reversing the orientations of $\gamma$ and $\ov\gamma$.
Let $K \in \mathcal{K}$ be the chamber that contains $Q'_i, \ov{Q}'_i$.
If $i > 1$, the choice of $i$ implies that the vertical order $Q''_{i-1}, \ov{Q}''_{i-1}$ is different from that of $Q'_i, \ov{Q}'_i \subset K$.
If $i = 1$, then the vertical order of $Q_1, \ov{Q}_1 \subset K$ is different from that of $Q'_1, \ov{Q}'_1$.
Apply Proposition \ref{Prop:characteriztionminimizinggammaCaseB}(c) to the subsegment of $\gamma$ between $Q'_{i-1}$ or $Q_1$ and $Q''_i$ to obtain columns $E_1, \ldots, E_n$ and $E^*_1, \ldots, E^*_n \in \mathcal{E}_K$.
Similarly we obtain the columns $\ov{E}_1, \ldots, \ov{E}_{\ov{n}}$ and $\ov{E}^*_1, \ldots, \ov{E}^*_{\ov{n}} \in \mathcal{E}_K$ for the corresponding subsegment of $\ov\gamma$.
Note that $\dist^H_K (E_1, \ov{E}_1), \dist^H_K (E_n, \ov{E}_{\ov{n}}) \leq 3$.

If $\dist^H_K (E_1, E_n), \dist^H_K (\ov{E}_1, \ov{E}_{\ov{n}}) \leq 6$, then by Proposition \ref{Prop:characteriztionminimizinggammaCaseB}(c), all columns $E_i$ and $\ov{E}_i$ have distance $\leq 17$ from one another and hence we can just pick cells $Q', \ov{Q}'$ that are horizontally aligned to show the Lemma.
So assume from now on that this is not the case and let $L$ and $\ov{L}$ be the minimal chains between $E_1, E_n$ and $\ov{E}_1, \ov{E}_{\ov{n}}$.
By the tree property as explained in Lemma \ref{Lem:EEinKisatree}, $L$ and $\ov{L}$ intersect in a minimal chain $L^\circ$ such that every column on $(L \cup \ov{L}) \setminus L^\circ$ has horizontal distance $\leq 3$ from $L^\circ$.

As in the proof of Lemma \ref{Lem:interchangeorderinK} define the sets $S \subset L \cup E_1 \cup \ldots \cup E_n$, $S' \subset L$ and $\ov{S} \subset \ov{L} \cup \ov{E}_1 \cup \ldots \cup \ov{E}_{\ov{n}}$, $\ov{S}' \subset \ov{L}$ .
Observe that $S', \ov{S}'$ lie in different sets and might not intersect as before.
However, we can still find cells $Q^\circ \subset S'$, $\ov{Q}^\circ \subset \ov{S}'$ such that 
\[ \dist^H_K (Q^\circ, \ov{Q}^\circ) \leq 6 \qquad \text{and} \qquad \dist^V_K (Q^\circ, \ov{Q}^\circ) = 0. \]
We will work with these cells now instead of $Q^\circ$ alone.
By the definition of $S'$ there is a cell $Q^{\circ\circ} \subset S$ that is either equal to $Q^\circ$ or adjacent to $Q^\circ$ and horizontally aligned with it, i.e. $\dist^H_K (Q^{\circ\circ}, Q^\circ) \leq 1$ and $\dist^V_K (Q^{\circ\circ}, Q^\circ) = 0$.
Again, by the definition of $S$, there is a cell $Q' \in \mathcal{Q}$ on $\gamma$ that is either equal to $Q^{\circ\circ}$ or adjacent to it across a wall, i.e. $\dist(Q', Q^{\circ\circ}) \leq H$.
Altogether this implies that $\dist (Q', Q^\circ) \leq 1 + H$.
By an analogous argument, we can find a cell $\ov{Q}'$ on $\ov\gamma$ with $\dist(\ov{Q}', \ov{Q}^\circ) \leq 1 + H$.
Hence $\dist(Q', \ov{Q'}) \leq 5 + 2H < 3H$ for large enough $H$.
\end{proof}

\begin{Proposition} \label{Prop:combinatorialconvexityCaseB}
Proposition \ref{Prop:combinatorialconvexityseveralSeifert} also holds in the case in which $M$ satisfies condition (B).
\end{Proposition}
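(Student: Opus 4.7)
The plan is to mirror the structure of the proof of Proposition \ref{Prop:combinatorialconvexityseveralSeifert}, but exploit a global vertical deck transformation available in case (B) to sidestep the absence of an analog of Lemma \ref{Lem:distg1g2afterW}.

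First I would identify a distinguished deck transformation $\varphi$ of the universal covering $\td{M} \to M$. In case (B), $M$ is the total space of an $S^1$-bundle over a closed orientable surface of negative Euler characteristic, so the $S^1$-fiber class generates a central subgroup of $\pi_1 (M)$. The corresponding deck transformation $\varphi$ therefore preserves every chamber $K \in \mathcal{K}$ individually and acts on each $K$ as a vertical translation. Because the $S^1$-fibrations coming from either side of every wall agree in case (B) (this was arranged in subsection \ref{subsec:constructionofV}), the action of $\varphi$ preserves the column structure of $\td{M}$ \emph{globally}: for every cell $Q_0 \in \mathcal{Q}$ and every $z \in \IZ$, the cells $Q_0$ and $\varphi_z (Q_0)$ lie in the same column $E \in \mathcal{E}$.

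Next I would prove the analog of Lemma \ref{Lem:combconvexseveralSeifertpreparation}, but \emph{without} the hypothesis $Q_0 \subset K$. Double-induct on $R$ and on $\dist^V_K (Q_1, Q_2)$; choose $z \in \IZ$ with $Q^* = \varphi_z (Q_1)$ and assume without loss of generality that $Q^*$ lies vertically between $Q_1$ and $Q^{**} := \varphi_{-z}(Q_2)$. Let $\gamma_1, \gamma_2$ be combinatorially minimizing curves from $Q_0$ to $Q_1, Q_2$ and apply Lemma \ref{Lem:interchangeorderinKCaseB} to the pair $\varphi_z \circ \gamma_1$ and $\gamma_2$. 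The hypothesis of that Lemma requires only that the initial cells $\varphi_z (Q_0), Q_0$ lie in a common column and that the terminal cells $Q^*, Q_2$ lie in a common column --- which is automatic by the previous paragraph, regardless of where $Q_0$ sits. This is the essential point: case (C) could not provide such an estimate, because Lemma \ref{Lem:interchangeorderinK} demanded vertical alignment \emph{inside a fixed chamber}. We obtain cells $Q'_1, Q'_2$ on $\varphi_z \circ \gamma_1$ and $\gamma_2$ with $\dist (Q'_1, Q'_2) < 3H$, and then the dichotomy used in Lemma \ref{Lem:combconvexseveralSeifertpreparation} --- either $\dist (Q_0, Q'_2) + \dist (\varphi_{-z}(Q'_1), Q_1) \leq R + 3H$, in which case the triangle inequality yields the bound directly, or not, in which case the induction hypothesis applies after translating by $\varphi_{-z}$ and observing that $\dist^V_K (\varphi_{-z}(Q'_1), Q^*) < \dist^V_K (Q_1, Q_2)$ --- proceeds verbatim, producing a bound of the form $\dist (Q_0, Q^*) < R + 6 H < R + 10 H$.

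Since this estimate holds for \emph{every} $Q_0 \in \mathcal{Q}$, this is already the statement of Proposition \ref{Prop:combinatorialconvexityCaseB}; no secondary reduction step as in the proof of Proposition \ref{Prop:combinatorialconvexityseveralSeifert} is needed. The main point to verify carefully is the global vertical-translation property of $\varphi$ on the column lattice of every chamber; this rests on the coincidence of the $S^1$-fiber directions across all walls in case (B) and would break down in case (C), but we are safely within case (B) throughout.
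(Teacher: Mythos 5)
Your proposal is correct and takes essentially the same route as the paper: both exploit the fact that in case (B) the deck transformation $\varphi$ corresponding to the (central) $S^1$-fiber class acts as a vertical shift on every column of $\td{M}$, so the induction in Lemma \ref{Lem:combconvexseveralSeifertpreparation} can be run for arbitrary $Q_0 \in \mathcal{Q}$ with Lemma \ref{Lem:interchangeorderinKCaseB} in place of Lemma \ref{Lem:interchangeorderinK}, and the reduction step via Lemma \ref{Lem:distg1g2afterW} becomes unnecessary. The paper's own proof is exactly this observation, stated more briefly.
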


\begin{proof}
We follow the proof of Lemma \ref{Lem:combconvexseveralSeifertpreparation}.
Observe that since $M$ satisfies condition (B), the action $\varphi : \IZ \curvearrowright \td{M}$ acts as a vertical shift on each column of $\td{M}$.
So we do not need to restrict to the case in which the cells $Q_0, Q_1, Q_2$ lie in the same chamber.
Instead of applying Lemma \ref{Lem:interchangeorderinK}, we now make use of Lemma \ref{Lem:interchangeorderinKCaseB} to obtain cells $Q'_1, Q'_2 \subset K$ on $\varphi_z \circ \gamma_1$ and $\gamma_2$ with $\dist(Q'_1, Q'_2) < 3H < 4H$.
The rest of the proof is exactly the same as that of Lemma \ref{Lem:combconvexseveralSeifertpreparation}.
\end{proof}

\subsection{Proof of Proposition \ref{Prop:easiermaincombinatorialresultCaseb} if $M$ satisfies condition (B) or (C)} \label{subsec:proofofeasiermaincombinatorialresult}
We will now apply the combinatorial convexity estimates from Propositions \ref{Prop:combinatorialconvexityseveralSeifert} and \ref{Prop:combinatorialconvexityCaseB} to construct large polyhedral balls in $\td{M}$ which consist of cells.
In the following we will always assume that $M$ satisfies condition (B) or (C) and that $\eta$, $H$ have been chosen smaller/larger than than all constants $\eta^*$, $H^*$, respectively, which appeared the Lemmas and Propositions of subsections \ref{subsec:combconvexincaseC} and \ref{subsec:combconvexincaseB}.

\begin{Lemma} \label{Lem:Sinchambersimplyconnected}
Let $K \in \mathcal{K}$ be a chamber of $\td{M}$ and consider a finite union of cells $S \subset K$ whose interior is connected.
Assume that $S$ has the property that for any two cells $Q_1, Q_2 \subset S$ that are vertically aligned, $S$ also contains all cells that are vertically between $Q_1$ and $Q_2$.
Then $S$ is homeomorphic to a closed $3$-disk and the intersection of $S$ with every wall $W \subset \partial K$ has connected interior in $W$.
More precisely, there is a continuous, injective map $b : D^3 \to \td{M}$ with $b(D^3) = S$ that is an embedding on $B^3 \cup (S^2 \setminus b^{-1} (\td{V}^{(1)}))$ and for all walls $W \subset \partial K$ the preimage $b^{-1} (W)$ is either empty or a (connected) topological disk that is the union of rectangles.
\end{Lemma}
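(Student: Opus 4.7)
I will work in the product identification $K \cong \td\Sigma_K \times \IR$ from \eqref{eq:arrangementQinK} and organize everything around the set $\mathcal{E}_S = \{E \in \mathcal{E}_K : E \cap S \neq \emptyset\}$ of columns meeting $S$. For each such $E$, the vertical convexity hypothesis lets me write $S \cap E = \ov{F}_E \times [a_E, b_E]$ with integers $a_E < b_E$, where $\ov{F}_E \subset \td\Sigma_K$ is the closure of the component of $\Int \td\Sigma_K \setminus \td{C}^*_K$ corresponding to $E$. Each $\ov{F}_E$ is a closed topological $2$-disk, because $C^*_K$ was chosen so as to cut $\Sigma_K$ into a fundamental domain and a lift of a fundamental polygon to $\td\Sigma_K$ is a polygonal disk. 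Two cells of $S$ can be joined inside $\Int S$ only through shared $2$-faces, which correspond either to a vertical step inside one column or to a horizontal step between two adjacent columns; hence connectedness of $\Int S$ forces $\mathcal{E}_S$ to be a connected subgraph of the column adjacency graph of $\mathcal{E}_K$, and therefore a finite subtree by the tree property in Lemma \ref{Lem:EEinKisatree}.

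I would then prove $S \cong D^3$ by induction on $|\mathcal{E}_S|$. The base case $|\mathcal{E}_S| = 1$ is immediate, as $S$ is the prism $\ov F_E \times [a_E, b_E]$ over a closed disk. For the inductive step, pick a leaf $E^*$ of the subtree $\mathcal{E}_S$ with unique neighbor $E^{**} \in \mathcal{E}_S$ and set $S' = \overline{S \setminus (S \cap E^*)}$. Removing a leaf preserves the subtree property of $\mathcal{E}_{S'}$ and keeps $\Int S'$ connected, since any path through $\Int S$ that enters $E^*$ must leave it again through the same neighbour $E^{**}$ and can therefore be rerouted through $E^{**}$; the vertical convexity hypothesis is inherited trivially. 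By induction $S' \cong D^3$. Now
\[ (S \cap E^*) \cap S' \;=\; (\ov{F}_{E^*} \cap \ov{F}_{E^{**}}) \times \bigl([a_{E^*}, b_{E^*}] \cap [a_{E^{**}}, b_{E^{**}}]\bigr) \]
is a rectangle: the horizontal factor is a single edge of $\td C^*_K$, and the vertical factor is a non-degenerate integer interval, because connectedness of $\Int S$ with $E^*$ a leaf of $\mathcal{E}_S$ forces the integer cell-levels in $E^*$ and $E^{**}$ to overlap in at least one level. This $2$-disk sits in the boundary of both $3$-disks $S'$ and $S \cap E^*$, and gluing two $3$-disks along a disk in each boundary yields a $3$-disk, closing the induction. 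The map $b : D^3 \to S$ is assembled inductively from polyhedral parameterizations of the prisms $S \cap E$ matched along the rectangular gluings; it is injective by construction, and the only place where $b|_{S^2}$ fails to be a local embedding is along preimages of the polyhedral $1$-skeleton $\td V^{(1)}$, which is precisely what the statement allows. The main obstacle is verifying that the gluing locus in the inductive step is a single connected $2$-disk rather than something degenerate or disconnected; this is exactly where the vertical-overlap consequence of $\Int S$ being connected is essential.

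For each wall $W \subset \partial K$, the columns of $\mathcal{E}_K$ adjacent to $W$ form a linearly ordered chain along $W$, with consecutive ones glued along edges of $\td C^*_K$ incident to $W$; the subset of $\mathcal{E}_S$ adjacent to $W$ is then a subtree of a chain, hence itself a sub-chain $E_0, \ldots, E_p$. Consequently
\[ S \cap W \;=\; \bigcup_{i=0}^{p} (\ov{F}_{E_i} \cap W) \times [a_{E_i}, b_{E_i}] \]
is a union of parallel affine rectangles in $W$. For each pair of consecutive columns $E_i, E_{i+1}$, connectedness of $\Int S$ again forces their vertical integer intervals to overlap in a non-degenerate integer sub-interval, so the adjacent rectangles meet along a non-degenerate common edge. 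The union is therefore a simply connected staircase polygon with connected interior, i.e.\ a topological disk built out of rectangles. Choosing the inductive parameterization $b$ compatibly with this decomposition makes $b^{-1}(W)$ coincide with this disk, completing all the assertions of the lemma.
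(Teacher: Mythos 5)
Your proof is correct and follows essentially the same strategy as the paper's: both are inductive arguments that exploit (i) vertical convexity to write $S \cap E$ as a prism $\ov{F}_E \times [a_E, b_E]$ over the closed disk $\ov{F}_E$, (ii) the tree structure of $\mathcal{E}_K$ from Lemma \ref{Lem:EEinKisatree}, and (iii) the observation that connectedness of $\Int S$ forces adjacent columns of $\mathcal{E}_S$ to have vertical level-intervals overlapping in at least one full cell level, so that the gluing loci are genuine $2$-disks. The one organizational difference is how the induction is set up: the paper picks an arbitrary component $X \subset \pi^{-1}(C_K)$ with cells of $S$ on both sides and splits $S = S_1 \cup_X S_2$, applying the induction to both halves, while you always peel off a single leaf column $E^*$ of the subtree $\mathcal{E}_S$ and glue the prism $S \cap E^*$ back onto $S'$. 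Your leaf-peeling version is in some ways cleaner, because it makes it automatic that the removed piece is a single prism and sidesteps the need to check that $S\setminus X$ has exactly two components and that each has connected interior; the paper's version allows a more symmetric split at the cost of those verifications (which it leaves terse). The wall-intersection argument is the same in both: you observe that $\mathcal{E}_S \cap L_W$ is a sub-chain of the chain $L_W$ of columns adjacent to $W$ and that consecutive rectangles share a non-degenerate edge; the paper reaches the same conclusion via a minimal chain between two given cells of $S$ adjacent to $W$. Both proofs treat the construction of the explicit parameterization $b$ as a routine assembly of polyhedral charts, which is appropriate here.
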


\begin{proof}
The Lemma is obviously true if $S$ only consists of cells that are vertically aligned.
Observe next that the columns of $K$ are bounded by subsets of $\partial K$ and components of $\pi^{-1} (C_K)$.
Those components correspond to arcs of $\td{C}^*_K \subset \td\Sigma_K$, are diffeomorphic to $I \times \IR$ and every two adjacent columns intersect in exactly one such component.
Moreover, each such component separates $K$ into two components.

Consider now such a component $X \subset \pi^{-1} (C_K)$ with the property that not all cells of $S$ lie on one side of $X$.
This is always possible if not all cells of $S$ are vertically aligned.
Let $S_1, S_2 \subset K$ be the closures of the two components of $S \setminus X$.
Then $S_1 \cap S_2$ is a connected rectangle and so the interiors of $S_1, S_2$ must be connected and hence $S_1, S_2$ are homeomorphic to $3$-disks.
Since the interior of $S_1 \cap S_2$ in $X$ is a (connected) disk, we find that $S = S_1 \cup S_2$ is a topological $3$-disk as well.
The fact that $S$ is a topological disk follows from this argument by induction.

Next, let $W \subset \partial K$ be a wall and assume that two cells $Q, Q' \subset S$ are adjacent to $W$.
Let $E, E' \in \mathcal{E}_K$ be the columns that contain $Q, Q'$.
Since $S$ is connected, we can find a chain $(E_0, \ldots, E_n)$ between $E, E'$ such that $E_i$ contains a cell of $S$ for all $i = 0, \ldots, n$.
We may assume that we have picked the chain such that $n$ is minimal.
Thus this chain cannot contain any column twice.
Hence it is minimal and so all its columns are adjacent to $W$.
Note that $E_i \cap S \cap W$ is a rectangle for each $i = 0, \ldots, n$.
By the previous paragraph, the rectangles $E_{i-1} \cap S \cap W$ and $E_i \cap S \cap W$ intersect in more than one point.
It follows that $S \cap W$ is a topological disk.

It follows easily that we can connect $Q$ with $Q'$ through cells in $K$ which are adjacent to $W$ and hence $S \cap W$ is connected.
By the property of $S$, this intersection can only be a topological $2$-disk.

The existence of the map $b$ follows along the lines of this proof.
\end{proof}

Let $Q_0 \in \mathcal{Q}$ be an arbitrary cell and $R > 0$ a positive number.
Then we define
\[ B_R (Q_0) = \bigcup \big\{ Q \in \mathcal{Q} \;\; : \;\; \dist (Q, Q_0) < R \big\}. \]
Next, consider the distance function $\dist^{\mathcal{K}} : \mathcal{K} \times \mathcal{K} \to [0, \infty)$, which assigns to every pair of chambers $K_1, K_2$ the length of the minimal chain between $K_1, K_2$.
This length is equal to the minimal number of intersections of an arc between $K_1, K_2$ with the walls of $\td{M}$.
For two cells $Q_1 \subset K_1, Q_2 \subset K_2$ we set $\dist^{\mathcal{K}} (Q_1, Q_2) = \dist^{\mathcal{K}} (K_1, K_2)$.
Observe that
\[ \dist (Q_1, Q_2) \geq H \dist^{\mathcal{K}} (K_1, K_2). \]
Let $J > 0$ be a large constant whose value we will determine later.
We define a new distance function $\dist' (\cdot, \cdot)$ on $\mathcal{Q}$ as follows
\[ \dist' (Q_1, Q_2) := \dist(Q_1, Q_2) + J \dist^{\mathcal{K}} (Q_1, Q_2). \]
Obviously, $(\mathcal{Q}, \dist')$ is a metric space.
Set moreover
\[ B'_R (Q_0) = \bigcup \big\{ Q \in \mathcal{Q} \;\; : \;\; \dist'(Q, Q_0)  < R \big\}. \]
Finally, we define
\[ P_R (Q_0) = \bigcup \Bigg\{ Q \in \mathcal{Q} \;\; : \;\; \begin{array}{l} Q \subset K \in \mathcal{K} \; \text{and there are cells $Q_1, Q_2 \subset B'_R (Q_0)$} \\ \text{in $K$ such that $Q_1, Q, Q_2$ are vertically aligned} \\ \text{and $Q$ lies vertically between $Q_1, Q_2$} \end{array} \Bigg\}. \]

\begin{Proposition} \label{Prop:distballsinQQareballs}
Assume that $M$ satisfies condition (B) or (C).
Then there are choices for $\eta, H, J$ and a constant $C_2 < \infty$ such that the following holds:

For all $Q_0 \in \mathcal{Q}$ and all $R > 0$ we have
\[ B'_R (Q_0) \subset P_R (Q_0) \subset \Int B'_{R + C_2} (Q_0) \cup \partial \td{M}. \]
Moreover, there is a continuous map $b_{R, Q_0} : D^3 \to \td{M}$ such that $b_{R, Q_0} (D^3) = P_R(Q_0)$ and $b_{R, Q_0} (S^2) = \partial P_R(Q_0)$ and such that $b_{R, Q_0}$ is an embedding on $B^3 \cup (S^2 \setminus b_{R, Q_0}^{-1} (\td{V}^{(1)}))$.

Finally, let $K_0 \in \mathcal{K}$ be the chamber that contains $Q_0$.
Then for all cells $Q \subset B'_R(Q_0) \cap K_0$ we have $\dist^H_{K_0} (Q, Q_0), \dist^V_{K_0} (Q, Q_0) < C_2 R$.
\end{Proposition}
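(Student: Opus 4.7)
The first inclusion $B'_R(Q_0) \subset P_R(Q_0)$ is immediate from the definition of $P_R(Q_0)$, taking $Q_1 = Q_2 = Q$. For the reverse inclusion, consider $Q \in P_R(Q_0)$ lying in a chamber $K$, with vertically aligned witnesses $Q_1, Q_2 \subset B'_R(Q_0) \cap K$. Since $Q, Q_1, Q_2$ all lie in $K$, their $\dist^{\mathcal{K}}$-distances to $Q_0$ coincide, so the $J \dist^{\mathcal{K}}$ contribution is the same for all three cells. The combinatorial convexity estimate (Proposition \ref{Prop:combinatorialconvexityseveralSeifert} in case (C), Proposition \ref{Prop:combinatorialconvexityCaseB} in case (B)) applied to $Q_0, Q_1, Q_2$ yields $\dist(Q_0, Q) < \max\{\dist(Q_0, Q_1), \dist(Q_0, Q_2)\} + 10H$, hence $\dist'(Q_0, Q) < R + 10H$. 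To upgrade this to interior membership, observe that any cell of $\mathcal{Q}$ sharing a face with $Q$ differs from $Q$ by at most $\max(H, 1, \eta)$ in $\dist$ and at most $1$ in $\dist^{\mathcal{K}}$; so choosing $C_2$ larger than $10H + H + J + 1$ places the closed star of $Q$ into $B'_{R+C_2}(Q_0)$ except for faces lying on $\partial \td{M}$.

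To construct $b_{R, Q_0}$, I will assemble chamber-wise pieces using the tree structure of $\mathcal{K}$ from Lemma \ref{Lem:KKistree}. For each chamber $K$ with $S_K := P_R(Q_0) \cap K \neq \emptyset$, vertical convexity of $S_K$ is built into the definition of $P_R$. To invoke Lemma \ref{Lem:Sinchambersimplyconnected} I must also verify that $\Int S_K$ is connected, which I propose to do by first establishing that $B'_R(Q_0) \cap K$ is connected (using that, for $J$ large enough relative to $H$, any minimizing curve between two of its cells is monotonic in $\dist^{\mathcal{K}}$ and therefore passes only through cells in $B'_R(Q_0)$) and then propagating connectedness through the vertical fillings that define $P_R$. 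Granting this, Lemma \ref{Lem:Sinchambersimplyconnected} provides a chamber-wise injection $b_K : D^3 \to S_K$ with each nonempty wall intersection $S_K \cap W$ a topological $2$-disk. An induction on the finite subtree of chambers meeting $P_R(Q_0)$ then assembles the global map $b_{R, Q_0}$, each step gluing a new $3$-disk along a common boundary $2$-disk and preserving the disk property.

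For the final intrinsic bound, take $Q \subset B'_R(Q_0) \cap K_0$, so $\dist^{\mathcal{K}}(Q, Q_0) = 0$ and $\dist(Q, Q_0) < R$. Let $\gamma$ be a combinatorially minimizing curve from $Q_0$ to $Q$ and apply Proposition \ref{Prop:characterizationminimizinggamma} (respectively Proposition \ref{Prop:characteriztionminimizinggammaCaseB}) to extract the columns $E_1, \ldots, E_n \in \mathcal{E}_{K_0}$ visited by $\gamma$ together with the distinct markers $E^*_1, \ldots, E^*_n$ on the minimal chain from $E_1$ to $E_n$. The length of that chain is therefore at least $n - 1$, and each consecutive transition $E_i \to E_{i+1}$ costs at least $1$ (either a $C_{K_0}$-crossing of cost $1$ or a wall excursion of cost $\geq 2H$), giving $\dist^H_{K_0}(Q, Q_0) \leq n - 1 < R$. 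For the vertical bound, the $\pi^{-1}(S_{K_0})$-crossings of $\gamma$ inside $K_0$ each cost $\eta$ and directly bound the vertical distance traversed within $K_0$; the wall excursions of $\gamma$ outside $K_0$, of which there are at most $R/(2H)$, each introduce a further vertical shift bounded by the estimates of Lemma \ref{Lem:QQestimatesatW}(d)--(f) in terms of the horizontal span of the excursion. Summing these contributions gives $\dist^V_{K_0}(Q, Q_0) < C_2 R$ for an appropriate $C_2$.

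The main obstacle is the connectedness step inside the construction of $b_{R, Q_0}$: it is not a priori clear that $B'_R(Q_0) \cap K$ is connected for a chamber $K$ far from $K_0$, since two cells of $B'_R(Q_0) \cap K$ may be reached from $Q_0$ through different walls of $K$. I expect this to be handled by choosing $J$ large enough relative to $H$ so that minimizing curves reaching nearby cells of $K$ necessarily share the same final wall of entry; Lemma \ref{Lem:distg1g2afterW} in case (C), and the analogous argument embedded in Lemma \ref{Lem:interchangeorderinKCaseB} in case (B), then localize the possible entry cells and reduce the connectedness of $B'_R(Q_0) \cap K$ to an intra-chamber question that the convexity estimate settles.
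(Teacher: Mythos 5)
Your inclusion argument $B'_R(Q_0)\subset P_R(Q_0)\subset\Int B'_{R+C_2}(Q_0)\cup\partial\td{M}$ is essentially the paper's, with $J=11H$ and the convexity estimate.  The other two parts have genuine gaps.

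For the construction of $b_{R,Q_0}$, you propose to show that $B'_R(Q_0)\cap K$ is connected for \emph{each} chamber $K$ and to reduce this to an intra-chamber question via Lemma~\ref{Lem:distg1g2afterW}.  The paper never establishes per-chamber connectedness (except for $K_0$), and in fact works with the weaker claim that $\Int\bigl(B'_R(Q_0)\cap(K_0\cup\ldots\cup K_n)\bigr)$ is connected when $K_0,K_1,\ldots$ are enumerated by increasing $\dist^{\mathcal{K}}(K_i,K_0)$.  This is proved by a separate induction on the number of cells met by a minimizing curve, using Proposition~\ref{Prop:characterizationminimizinggamma}(b2)/Proposition~\ref{Prop:characteriztionminimizinggammaCaseB}(b3) to detour around double wall crossings.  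In the gluing step the paper then explicitly allows $P_R(Q_0)\cap K_n$ to be disconnected: it shows that the wall footprint of every component is a topological $2$-disk contained in $b_{n-1}(D^3)\cap W$, and attaches each component as a separate $3$-handle.  Your sketch stops at ``reduce to an intra-chamber question,'' but the intra-chamber connectedness of $B'_R(Q_0)\cap K$ is precisely the unproved assertion, so the reduction begs the question.  You need either the paper's union-of-chambers induction or a genuinely new argument that each $B'_R(Q_0)\cap K$ is connected; the latter is not supplied.

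For the final estimate the horizontal bound is wrong as written.  From Proposition~\ref{Prop:characterizationminimizinggamma}(c) the markers $E^*_1,\ldots,E^*_n$ are distinct columns on the minimal chain from $E_1$ to $E_n$, so its length is \emph{at least} $n-1$, not at most; when $\gamma$ makes a wall excursion between $E_i$ and $E_{i+1}$, the gap $\dist^H_{K_0}(E^*_i,E^*_{i+1})$ can be large and must be controlled independently.  The paper does exactly this via Lemma~\ref{Lem:QQestimatesatW}(f): if $\gamma$ exits $K_0$ right after $Q_i$ and re-enters right before $Q_{i+1}$ through cells $Q'_i,Q'_{i+1}$ of the adjacent chamber, then
\[
\dist^H_{K_0}(Q_i,Q_{i+1}),\ \dist^V_{K_0}(Q_i,Q_{i+1}) < C_0\,\eta^{-1}\dist(Q'_i,Q'_{i+1}) + C_0 \le C_0\,\eta^{-1}\dist(Q_i,Q_{i+1})
\]
for $H>1$, and summing with the triangle inequality gives both bounds with $C_2>C_0\eta^{-1}$.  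Your vertical estimate gestures at Lemma~\ref{Lem:QQestimatesatW}(d)--(f) ``in terms of the horizontal span of the excursion,'' which is the right idea, but without the corrected horizontal bookkeeping the argument does not close.
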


\begin{proof}
We will see that the proposition holds for $J = 11H$.

We first show that
\begin{equation} \label{eq:easyinclusionofcellballs}
 B'_R (Q_0) \subset P_R (Q_0) \subset B'_{R + 10H} (Q_0).
\end{equation}
The first inclusion property is trivial.
For the second inclusion property consider a cell $Q \subset P_R(Q_0)$.
Let $K \in \mathcal{K}$ be the chamber that contains $Q$ and choose cells $Q_1, Q_2 \subset B'_R (Q_0) \cap K$ such that $Q_1, Q, Q_2$ are vertically aligned and $Q$ lies vertically in between $Q_1, Q_2$.
Then $\dist (Q_1, Q_0) = \dist' (Q_1, Q_0) - J \dist^{\mathcal{K}} (K, K_0) < R - J \dist^{\mathcal{K}} (K_0, K)$ and similarly $\dist (Q_2, Q_0) < R - J \dist^{\mathcal{K}} (K, K_0)$.
It follows from Proposition \ref{Prop:combinatorialconvexityseveralSeifert} in case (C) and Proposition \ref{Prop:combinatorialconvexityCaseB} in case (B) that $\dist (Q, Q_0) < R + 10H - J \dist^{\mathcal{K}} (K, K_0)$.
So $\dist'(Q, Q_0) < R + 10H$ and (\ref{eq:easyinclusionofcellballs}) follows.
In order to establish the inclusion property of this proposition, it hence suffices to choose $C_2$ larger than $10H + J$ plus the maximal number of cells that can intersect in one point.

Next, choose a sequence $K_1, K_2, \ldots \in \mathcal{K}$ such that $\mathcal{K} = \{ K_0, K_1, K_2, \ldots \}$ and such that $\dist^{\mathcal{K}} (K_n, K_0)$ is non-decreasing in $n$.
We will first show that the interior of $B'_R(Q_0) \cap (K_0 \cup \ldots \cup K_n)$ is connected for each $n \geq 0$:
Fix $n$, choose a cell $Q \subset B'_R(Q_0) \cap (K_0 \cup \ldots \cup K_n)$, $Q \neq Q_0$, let $K_i$ be the chamber that contains $Q$ and consider a combinatorially minimizing arc $\gamma : [0,1] \to \td{M}$ from $Q_0$ to $Q$.
We show by induction on the number of cells that intersect $\gamma$ that $\Int Q$ lies in the same connected component of $\Int (B'_R(Q_0) \cap (K_0 \cup \ldots \cup K_n))$ as $\Int Q_0$.
Let $Q' \in \mathcal{Q}$ be the cell that $\gamma$ intersects prior to $Q$.
If $Q' \subset K_i$, then we are done by the induction hypothesis since then $\dist'(Q', Q_0) < \dist'(Q, Q_0)$ and hence $Q' \subset B'_R(Q_0) \cap (K_0 \cup \ldots \cup K_n)$.
Assume next that $Q' \subset K_j \in \mathcal{K}$ for $j \neq i$ and hence $\gamma$ crosses a wall $W = K_i \cap K_j \in \mathcal{W}$ in between $Q'$ and $Q$.
Then $\dist (Q', Q_0) = \dist (Q, Q_0) - H$ and $\dist^{\mathcal{K}} (K_j, K_0) = \dist^{\mathcal{K}} (K_i, K_0) \pm 1$.
It suffices to consider the case in which $\dist^{\mathcal{K}} (K_j, K_0) = \dist^{\mathcal{K}} (K_i, K_0) + 1$ since otherwise we are again done by the induction hypothesis.
In this case $\gamma$ must cross $W$ twice and there is a cell $Q'' \subset K_i$ that $\gamma$ intersects right before intersecting $W$ for the first time.
By Proposition \ref{Prop:characterizationminimizinggamma}(b2) in case (C) or Proposition \ref{Prop:characteriztionminimizinggammaCaseB}(b3) in case (B), the arc $\gamma$ only intersects cells that lie in $K_j$ and that are adjacent to $W$ between $Q''$ and $Q'$.
Consider now all cells $Q^* \subset K_i$ that are adjacent to a cell $Q^{**} \subset K_j$ which intersects $\gamma$.
For each such $Q^*$ we have
\[ \dist(Q^*, Q_0) \leq H + \dist(Q^{**}, Q_0) \leq H + \dist(Q', Q_0) = \dist(Q, Q_0) \]
and thus $\dist'(Q^*, Q_0) \leq \dist' (Q, Q_0)$ and $Q^* \subset B'_R(Q_0) \cap K_i$.
It follows that $Q'$ and $Q''$ lie in the same connected component of $B'_R (Q_0) \cap K_i$.
This finishes the induction argument.

So also the interior of $P_R (Q_0) \cap (K_0 \cup \ldots \cup K_n)$ is connected for all $n \geq 0$.
We will now show by induction on $n$ that there is a continuous map $b_n : D^3 \to \td{M}$ whose image is equal to the closure of this interior and which is an embedding when restricted to $B^3 \cup (S^2 \setminus b^{-1}_n (\td{V}^{(1)}))$.
For $n = 0$ this statement follows immediately from Lemma \ref{Lem:Sinchambersimplyconnected} and the fact that the interior of $P_R(Q_0) \cap K_0$ is connected.
Assume now that $n \geq 1$.
There is a unique $i \in \{ 1, \ldots, n-1 \}$ such that $K_i$ is adjacent to $K_n$.
So $\dist^{\mathcal{K}} (K_i, K_0) = \dist^{\mathcal{K}} (K_n, K_0) - 1$.
Let $W = K_i \cap K_n \in \mathcal{W}$ be the wall between $K_i$ and $K_n$.
Observe that for every cell $Q \subset P_R(Q_0) \cap K_n$ that is adjacent to $W$ and every cell $Q' \subset K_i$ that is adjacent to $Q$ we have $\dist(Q', Q_0) \leq \dist(Q, Q_0) + H$.
So by (\ref{eq:easyinclusionofcellballs})
\[ \dist' (Q', Q_0) \leq \dist' (Q, Q_0) + H - J < R+ 11H - J = R. \]
Hence $Q' \subset B'_R (Q_0) \subset P_R (Q_0)$.
This implies
\[ \ov{P_R (Q_0) \cap \Int K_n} \cap W \subset P_R (Q_0) \cap W = \ov{P_R (Q_0) \cap \Int K_i} \cap W = b_{n-1} (D^3) \cap W . \]
By Lemma \ref{Lem:Sinchambersimplyconnected} the set $\ov{P_R (Q_0) \cap \Int K_n}$ is the union of the images of maps $b' : D^3 \to \td{M}$ with the appropriate regularity properties and any two such images intersect in at most an edge of $\td{V}$.
Moreover, the preimage of $W$ under every such map $b'$ is a (connected) topological disk that is contained in $b_{n-1}(D^3) \cap W$.
So we can combine $b_{n-1}$ with the maps $b'$ to obtain a map whose image is equal to the closure of the interior of $P_R (Q_0) \cap (K_0 \cup \ldots \cup K_n)$.
Smoothing this map in the interior of $D^3$ yields $b_n$.
This finishes the induction and proves the second assertion of the Proposition for large $n$.

Finally, we show the last statement.
Let $Q \subset B'_R(Q_0) \cap K_0$.
Then $\dist (Q, Q_0) < R$.
Consider a minimizing arc $\gamma : [0,1] \to \td{M}$ between $Q_0, Q$.
By Proposition \ref{Prop:characterizationminimizinggamma} or Proposition \ref{Prop:characteriztionminimizinggammaCaseB} the arc $\gamma$ stays within the union of $K_0$ with the chambers that are adjacent to $K_0$.
Let $Q_0, Q_1, \ldots, Q_n = Q' \subset K_0$ be the cells of $K_0$ that $\gamma$ intersects in that order.
Then for all $i = 0, \ldots, n-1$ either $\dist^H_{K_0} (Q_i, Q_{i+1}) + \dist^V_{K_0} (Q_i, Q_{i+1}) = 1 \leq \eta^{-1} \dist(Q_i, Q_{i+1})$ or $\gamma$ intersects a wall $W \subset \partial K$ right after $Q_i$ and right before $Q_{i+1}$.
In this case let $K' \in \mathcal{K}$ be the chamber on the other side of $W$ and let $Q'_i, Q'_{i+1} \subset K'$ be the cells that $\gamma$ intersects right after $Q_i$ and right before $Q_{i+1}$.
By Lemma \ref{Lem:QQestimatesatW}(f) we have
\[ \dist^H_{K_0} (Q_i, Q_{i+1}),\; \dist^V_{K_0} (Q_i, Q_{i+1}) < C_0 \eta^{-1} \dist(Q'_i, Q'_{i+1}) + C_0 . \]
If $H > 1$, then the right hand side is bounded by $C_0 \eta^{-1} \dist(Q_i, Q_{i+1})$.
The rest follows from the triangle inequality for $\dist^H_{K_0}$ and $\dist^V_{K_0}$ with $C_2 > C_0 \eta^{-1}$.
\end{proof}

We can finally establish Proposition \ref{Prop:easiermaincombinatorialresultCaseb} and hence Proposition \ref{Prop:maincombinatorialresult}(a) (see subsection \ref{subsec:reducifnoT2bundle}).

\begin{proof}[Proof of Proposition \ref{Prop:easiermaincombinatorialresultCaseb}]
By Proposition \ref{Prop:CasAiseasy}, we may assume that $M$ satisfies condition (B) or (C).

Observe first that the universal covering $\pi : \td{M} \to M$ can be seen as the restriction of the universal covering $\pi : \td{M}_0 \to M_0$ to a component of $\pi^{-1} (M)$.
Consider the simplicial complex $V \subset M$ as defined in subsection \ref{subsec:constructionofV} and let $f_0 : V \to M$ be the inclusion map.
Recall that $f_0$ lifts to the inclusion map $\td{f}_0 : \td{V} \to \td{M}$ in the universal covering $\pi : \td{M} \to M$.
Consider the Riemannian metric $g$ on $M_0$ and the map $f : V \to M$ from the assumptions of the Proposition.
Let $H : V \times [0,1] \to M_0$ be the homotopy between $f_0$ and $f$ and let $L$ be a strict upper bound on the length of the arcs $t \mapsto H(x, t)$ (note that $V$ is compact).
Since this homotopy leaves $\partial V$ invariant and embedded in $\partial M$, we can extend $H$ to a homotopy $H^* : (V \cup \partial M) \times [0,1] \to M_0$ between the inclusion map $f_0^* : V \cup \partial M \to M$ and the extension $f^* : V \cup \partial M \to M_0$ of $f$ such that $H^* (\cdot , t)$ restricted to $\partial M$ is the identity for all $t \in [0,1]$.
Here we view $V \cup \partial M$ as a connected simplicial complex.
The homotopy $H^*$ can be lifted to a homotopy $\td{H}^* : (\td{V} \cup \partial \td{M}) \times [0,1] \to \td{M}_0$ between the inclusion map $\td{f}_0 : \td{V} \cup \td{M} \to \td{M}$ and a lift $\td{f}^* : \td{V} \cup \partial \td{M} \to \td{M}_0$ of $f^*$, i.e. $f^* \circ \pi |_{\td{V} \cup \partial \td{M}} = \pi \circ \td{f}^*$.
Note that $\pi (\td{H}^* (x,t)) = H^* (\pi(x),t)$ for all $(x,t) \in (\td{V} \cup \partial \td{M}) \times [0,1]$.
Still, the lengths of the arcs $t \mapsto \td{H}^*(x, t)$ are bounded by $L$.

Consider the solid torus $S \subset \Int M$ and pick a component $\td{S} \subset \pi^{-1} (S) \subset \Int \td{M}$.
Fix a diffeomorphism $\Phi : S^1 \times D^2 \to S$ and an orientation on the $S^1$-factor and denote by $\sigma = \Phi(S^1 \times \{ 0\}) \subset S$ the core of $S$.
By our assumptions, $\td{S} \approx \IR \times D^2$ and $\pi |_{\td{S}} : \td{S} \to S$ is a universal covering of $S$.
So we can find a lifted diffeomorphism $\td{\Phi} : \IR \times D^2 \to \td{S}$, such that $\pi \circ \td{\Phi} = \Phi \circ \pi_{S^1 \times D^2}$, where $\pi_{S^1 \times D^2} : \IR \times D^2 \to S^1 \times D^2, (u,x) \mapsto (e^{2\pi i u}, x)$ is the standard universal covering map.
Let $F = \td\Phi ( [0,1] \times D^2) \subset \td{S}$.
Then $\pi (F) = S$ and $\pi$ restricted to the interior of $F$ is injective.
So $F$ is a fundamental domain for the universal covering $\pi |_{\td{S}} : \td{S} \to S$.
The central loop $\sigma \approx S^1 \times \{ 0 \} \subset S \approx S^1 \times D^2$ induces a deck transformation $\varphi : \td{M}_0 \to \td{M}_0$, which is an isometry and $\td{S}$ is covered by fundamental domains of the form $\varphi^{(n)} (F)$ where $n \in \IZ$.
Observe also that $\td\sigma = \td\Phi (\IR \times \{ 0 \}) = \pi^{-1} (\sigma) \cap \td{S}$ is a properly embedded, infinite line, which is invariant under $\varphi$.

Choose a chamber $K_0 \in \mathcal{K}$ for which the displacement $\dist^{\mathcal{K}} (K_0, \varphi(K_0))$ is minimal.
Next, if $\varphi(K_0) = K_0$ choose a column $E_0 \in \mathcal{E}_{K_0}$ for which the displacement $\dist^H_{K_0} (E_0, \varphi(E_0))$ is minimal.
If $\varphi(K_0) \neq K_0$, the column $E_0 \in \mathcal{E}_{K_0}$ can be chosen arbitrarily.
Finally, choose an arbitrary cell $Q_0 \subset E_0$.
We will now show that there is a universal constant $c > 0$, which only depends on the structure of $V$ (and not on $S$!) such that for all $n \in \IZ$
\begin{equation} \label{eq:distgrowslinearlyinQQ}
 \dist'(Q_0, \varphi^{(n)} (Q_0)) \geq c |n|.
\end{equation}
If $\varphi (K_0) \neq K_0$, then we argue as follows.
Consider the minimal chain between $K_0$ and $\varphi(K_0)$ in the adjacency graph of $\mathcal{K}$.
The images of this minimal chain under the deck transformations $\varphi^{(0)}, \ldots, \varphi^{(n-1)}$ are each minimal and can be concatenated along $\varphi^{(1)} (K_0), \ldots, \varphi^{(n-1)} (K_0)$ to a chain between $K_0$ and $\varphi^{(n)} (K_0)$.
We now claim that this chain is minimal.
Otherwise, there are elements in this chain that occur at least twice.
Since the adjacency graph of $\mathcal{K}$ is a tree (see Lemma \ref{Lem:KKistree}), there must then be even two consecutive elements in this chain that are equal.
These two elements can only come from two distinct images of the minimal chain between $K_0$ and $\varphi(K_0)$.
So if $K'_0, K''_0 \in \mathcal{K}$ are the second and second last elements on this minimal chain then we must have $\varphi^{(i+1)} (K'_0) = \varphi^{(i)}(K''_0)$ for some $i \in \{ 0, \ldots, n-1 \}$.
But this would imply that $\varphi(K'_0) = K''_0$ and hence $\dist^{\mathcal{K}} ( K'_0, \varphi(K'_0)) = \dist^{\mathcal{K}} ( K'_0, K''_0)  = \dist^{\mathcal{K}} (K_0, \varphi(K_0)) - 2$; contradicting the minimal choice of $K_0$.
So we conclude that the chain in question is minimal and hence $\dist^{\mathcal{K}} (K_0, \varphi^{(n)}(K_0) ) \geq |n|$ for all $n \in \IZ$, which establishes (\ref{eq:distgrowslinearlyinQQ}) assuming $c < H+J$.

If $\varphi(K_0) = K_0$ but $\varphi(E_0) \neq E_0$, then we can draw the same conclusions for $\mathcal{E}_{K_0}$ instead of $\mathcal{K}$ and obtain $\dist^H_{K_0} (Q_0, \varphi^{(n)}(Q_0)) \geq |n|$ for all $n \in \IZ$.
If $\varphi(E_0) = E_0$, then $\dist^V_{K_0} (Q_0, \varphi^{(n)}(Q_0)) \geq |n|$ for all $n \in \IZ$.
So in the latter two cases (\ref{eq:distgrowslinearlyinQQ}) follows by the last assertion of Proposition \ref{Prop:distballsinQQareballs}.

Let $N \geq 1$ be some large natural number whose value we will determine at the end of the proof.
It will depend on $g$, $f$ and $L$.
The sets $\td{S}_+ = F \cup \varphi (F) \cup \ldots \varphi^{(N-1)} (F)$ and $\td{S}_- = \varphi^{(-1)} (F) \cup \ldots \varphi^{(-N)} (F)$ are each diffeomorphic to a solid cylinder $\approx I \times D^2$ and are bounded by annuli inside $\partial \td{S}$ as well as disks $D_0, D_+$ and $D_0, D_-$ where $D_+ = \varphi^{(N)} (D_0)$ and $D_- = \varphi^{(-N)} (D_0)$.
Let $\td\sigma_+, \td\sigma_-$ be the subsegments of $\td\sigma$ that connect $D_0$ with $D_+$ and $D_0$ with $D_-$, i.e. $\td\sigma_+ = \td\sigma \cap \td{S}_+$ and $\td\sigma_- = \td\sigma \cap \td{S}_-$.

Choose $R_0 > 0$ large enough such that $B'_{R_0} (Q_0)$ contains all points of $\td{M}$ that have distance at most $L$ from $F$.
Then for all $n \in \IZ$ the set $B'_{R_0} ( \varphi^{(n)}(Q_0))$ contains all points of $\td{M}$ that have distance at most $L$ from $\varphi^{(n)} (F)$.
Consider for the moment some number $R$ such that
\[ R_0 \leq R \leq c N - C_2 - R_0. \]
($C_2$ is the constant from Proposition \ref{Prop:distballsinQQareballs}.)
Then we have $B'_{R_0} (Q_0) \subset B'_R (Q_0) \subset P_R (Q_0)$, so every point of $\td{M}$ that has distance at most $L$ from $D_0 \subset F$ is contained in $P_R(Q_0)$.
We now claim that for all $n \in \IZ$ with $|n| \geq N$ the set $B'_{R_0} (\varphi^{(n)} (Q_0))$ is disjoint from the interior of $B'_{R + C_2} (Q_0)$.
Assume not and let $Q' \subset B'_{R_0} (\varphi^{(n)} (Q_0)) \cap B'_{R + C_2} (Q_0)$ be a cell in the intersection.
Then we obtain the following contraction using (\ref{eq:distgrowslinearlyinQQ}):
\[ c|n| \leq \dist'(Q_0, \varphi^{(n)} (Q_0)) \leq \dist'(Q_0, Q') + \dist' (Q', \varphi^{(n)} (Q_0)) < R_0 + R + C_2 \leq cN. \]
So $B'_{R_0} (\varphi^{(n)} (Q_0))$ is disjoint from $\Int B'_{R + C_2} (Q_0)$ and hence also from $P_R (Q_0) \subset \Int B'_{R + C_2} (Q_0) \cup \partial \td{M}$.
So $P_R(Q_0)$ does not contain any point of $\td{M}$ that has distance at most $L$ from $\td{S} \setminus (\td{S}_+ \cup \td{S}_-)$ and thus also from $D_+, D_-$.
This implies in particular that the arcs $\td\sigma_+$ and $\td\sigma_-$ have intersection number $1$ with the restriction $b_{R, Q_0} |_{S^2} : S^2 \to \td{M}$, whose image is $\partial P_R(Q_0)$ (see Proposition \ref{Prop:distballsinQQareballs}).

Our conclusions imply that the homotopy $\td{H}^*$ restricted to $\partial P_R(Q_0)$ does not intersect $D_0 \cup D_+ \cup D_-$ or, more generally, that it stays away from $\td{S} \setminus (\td{S}_- \cup \td{S}_+)$.
If we view $b_{R, Q_0} |_{S^2}$ as a map from $S^2$ to $\td{V} \cup \partial \td{M}$, then $(x,t) \mapsto \td{H}^* (b_{R, Q_0} |_{S^2}(x), t)$ is a homotopy between $b_{R, Q_0} |_{S^2} = \td{f}^*_0 \circ b_{R, Q_0} |_{S^2} : S^2 \to \td{M}$ and $s_{R, Q_0} = \td{f}^* \circ b_{R, Q_0}|_{S^2} : S^2 \to \td{M}_0$ whose image is disjoint from $D_0 \cup D_+ \cup D_-$  and $\td{S} \setminus (\td{S}_- \cup \td{S}_+)$.
So $s_{R, Q_0}$ has intersection number $1$ with $\td\sigma_+$ and $\td\sigma_-$.
Choose a small perturbation $s'_{R, Q_0} : S^2 \to \td{M}_0$ of $s_{R, Q_0}$ that intersects $\partial \td{S}$ transversally, that still stays away from $D_0, D_+, D_-$ and and $\td{S} \setminus (\td{S}_- \cup \td{S}_+)$ and that satisfies
\begin{equation} \label{eq:sRQ0ssRQ0}
 \area s'_{R, Q_0} \big|_{{s'}^{-1}_{R, Q_0} (\td{S}_+ \cup \td{S}_-)} < 2 \area s_{R, Q_0} \big|_{s^{-1}_{R, Q_0}(\td{S}_+ \cup \td{S}_-)}.
\end{equation}
(This can always be achieved by perturbing the composition of $s_{R, Q_0}$ with a diffeomorphism of $\td{M}_0$ which slightly expands $\td{S}$.)
Set
\[ X = {s'}_{R, Q_0}^{-1} (\td{S}), \qquad  X_+ = {s'}_{R, Q_0}^{-1} (\td{S}_+), \qquad X_- = {s'}_{R, Q_0}^{-1} (\td{S}_-). \]
Then $X, X_+, X_-$ are compact smooth domains of $S^2$ and we have $X = X_+ \dotcup X_- $, $s'( \partial X) \subset \partial \td{S}$ and $s'_{R, Q_0}$ restricted to $X_+$ and $X_-$ has non-zero intersection number with $\td{\sigma}$.
Let $X'_+ \subset X_+$ be the union of all components of $X_+$ on which $s'_{R, Q_0}$ has non-zero intersection number with $\td\sigma$, define $X'_- \subset X_-$ analogously and set $X' = X'_+ \cup X'_-$.
Then $X'_+, X'_- \neq \emptyset$ and $X'_+, X'_- \neq S^2$ and every component $Y \subset X'$ is bounded by at least one circle $Z \subset \partial Y$ such that $s'_{R, Q_0} |_Z : Z \to \partial \td{S}$ is non-contractible in $\partial \td{S}$.
Each such circle $Z$ bounds two disks $E_1, E_2 \subset S^2$ on either side (one of these disks contains $Y$ and the other one doesn't).
Consider now the set of all such disks $E \subset S^2$ coming from all components $Y$ of $X'$ and all boundary circles $Z \subset \partial Y$ for which $s'_{R, Q_0} |_Z : Z \to \partial \td{S}$ is non-contractible in $\partial \td{S}$.
Any two such disks are either disjoint or one is contained in the other.
We can hence choose a component $Y \subset X'$, a boundary circle $Z \subset \partial X'$ with the aforementioned property and a disk $E \subset S^2$ bounded by $Z$ such that $E$ is minimal with respect to inclusion.
We argue that $s'_{R, Q_0}$ restricted to every other boundary circle $Z' \subset \partial Y$ is contractible in $\td{S}$:
If this was not the case, then $Y$ must be disjoint from the interior of $E$, since otherwise $Z' \subset Y \subset E$ bounds a disk $E' \subsetneq E$.
By the same argument, $E$ cannot contain any other component $Y'$ of $X'$, because otherwise we would find a boundary circle $Z'' \subset \partial Y' \subset \Int E$ such that $s'_{R, Q_0} |_{Z''}$ is non-contractible in $\partial \td{S}$.
So $\Int E$ must be be disjoint from $X'$ and hence $s'_{R, Q_0} |_{E}$ describes a nullhomotopy of a non-contractible loop in $\partial \td{S}$, which does not intersect $\td\sigma$.
Since $\pi_2 (\td{M}_0) = \pi_2 (M_0) = 0$, this nullhomotopy can be homotoped relative boundary to a nullhomotopy that has non-zero intersection number with $\td\sigma$.
This is however impossible and we obtain a contradiction.
So $s'_{R, Q_0}$ restricted to all other boundary components of $Y$ is non-contractible in $\td{S}$ and hence we have shown that $\Sigma = Y$ and $h = \pi \circ s'_{R, Q_0} |_Y$ satisfy all the claims of the Proposition except for the area bound.

In view of (\ref{eq:sRQ0ssRQ0}) it remains to choose $R$ and $N$ such that $\area s_{R, Q_0} |_{s^{-1}_{R, Q_0}(\td{S}_+ \cup \td{S}_-)}$ can be bounded by a uniform multiple of $\area f$.
To do this choose radii $R_i = R_0 + C_2 i$ where $i = 0, \ldots, e$ with $e = \lfloor C_2^{-1} (cN - C_2 - 2 R_0) \rfloor$.
Then
\[ R_0 < R_1 < \ldots < R_e \leq c N - C_2 - R_0. \]
By Proposition \ref{Prop:distballsinQQareballs} we know that $\partial P_{R_0} (Q_0) \setminus \partial \td{M}, \ldots, \partial P_{R_e} (Q_0) \setminus \partial \td{M} \subset \td{V} \subset \td{M}$ are pairwise disjoint.
So since $b_{R_i, Q_0} (S^2) = \partial P_{R_i}(Q_0)$ and $s_{R_i, Q_0} = \td{f}^* \circ b_{R_i, Q_0} |_{S^2}$ and $\td{f}^* (\partial \td{M}) = \partial \td{M}$ we have
\[ \area s_{R_0, Q_0} \big|_{s^{-1}_{R_0, Q_0} (\td{S}_+ \cup \td{S}_-)} + \ldots + \area s_{R_e, Q_0} \big|_{s^{-1}_{R_e, Q_0} (\td{S}_+ \cup \td{S}_-)} \leq \area \td{f}^* \big|_{\td{f}_0^{* -1} (\td{S}_+ \cup \td{S}_-)}. \]
Since $\td{S}_+ \cup \td{S}_- = \varphi^{(-N)} (F') \dotcup \ldots \dotcup \varphi^{(N-1)} (F') \cup D_-$ for the half-open set $F' = F \setminus D_0$, we further have
\[ \area \td{f}^* \big|_{\td{f}^{* -1} (\td{S}_+ \cup \td{S}_-)} = \area \td{f}^* \big|_{\td{f}^{* -1} (\varphi^{(-N)} (F'))} + \ldots + \area \td{f}^* \big|_{\td{f}^{* -1} (\varphi^{(N-1)} (F'))}. \]
Observe now that whenever $\td{f}^* (x) = \td{f}^* (y)$ and $\pi(x) = \pi(y)$ for $x, y \in \td{V}$, then $x = y$, since the arcs $t \mapsto \td{H}^* (x, t), \td{H}^*(y,t)$ have the same endpoint and project to the same arc $t \mapsto H^* (\pi(x), t)$ under $\pi$.
So for all $n \in \IZ$ the projection $\pi$ restricted to $\td{f}^{* -1}(\varphi^{(n)}(F'))$ is injective.
Since $\pi ( \td{f}^{* -1} (\varphi^{(n)}(F'))) \subset f^{*-1} (S)$, we conclude $\area \td{f}^* \big|_{\td{f}^{* -1} (\varphi^{(n)} (F'))} \leq \area f^* |_{f^{* -1} (S)} < \area f$.
Putting all this together yields
\[ \area s_{R_0, Q_0} \big|_{s^{-1}_{R_0, Q_0} (\td{S}_+ \cup \td{S}_-)} + \ldots + \area s_{R_e, Q_0} \big|_{s^{-1}_{R_e, Q_0} (\td{S}_+ \cup \td{S}_-)} < 2N \area f. \]
So we can find an index $i \in \{ 0, \ldots, e \}$ such that
\[ \area s_{R_i, Q_0} \big|_{s^{-1}_{R_i, Q_0} (\td{S}_+ \cup \td{S}_-)} < \frac{2N}{e+1} \area f \leq \frac{2 C_2 N}{cN - C_2 - 2 R_0} \area f. \]
Choosing $N > 2 c^{-1} (C_2 + 2R_0)$ yields
\[ \area s_{R_i, Q_0} \big|_{s^{-1}_{R_i, Q_0} (\td{S}_+ \cup \td{S}_-)} < 4 c^{-1} C_2 \area f. \]
This finishes the proof of Proposition \ref{Prop:easiermaincombinatorialresultCaseb}
\end{proof}

\subsection{The case in which $M$ is covered by a $T^2$-bundle over a circle} \label{subsec:CaseT2bundleoverS1}
We finally present the proof of Proposition \ref{Prop:maincombinatorialresult}(b).

\begin{Lemma} \label{Lem:geomseriesinSL2Z}
Let $A \in SL(2, \IZ)$ be a $2 \times 2$-matrix with integral entries and determinant $1$.
Then for every $k \geq 1$ there is a number $1 \leq d \leq 6^k$ such that
\[ I + A + A^2 + \ldots + A^{d-1} \equiv 0 \quad \mod 3^k. \]
Here $I$ is the identity matrix.
\end{Lemma}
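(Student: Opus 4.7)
The plan is to prove the lemma by induction on $k$, strengthening the bound to $d_k \leq 2 \cdot 3^k$; since $2 \cdot 3^k \leq 6^k$ for $k \geq 1$, this suffices. Throughout I set $S_d := I + A + A^2 + \ldots + A^{d-1}$ and use the telescoping identity $(A - I) S_d = A^d - I$.

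For the base case $k = 1$, I would argue via Cayley--Hamilton, which gives $A^2 = tA - I$ for $t := \operatorname{tr}(A)$. Since $t \in \{0, 1, 2\} \pmod 3$, a case check suffices. If $t \equiv 0 \pmod 3$, then $A^2 \equiv -I$ and $S_4 = (I + A^2)(I + A) \equiv 0 \pmod 3$. If $t \equiv 1 \pmod 3$, then $A^3 = A \cdot A^2 = A(A - I) = (A - I) - A = -I \pmod 3$, so $A^6 \equiv I$, and grouping $S_6 = (I + A^3)(I + A + A^2) \equiv 0 \pmod 3$. If $t \equiv 2 \pmod 3$, then $S_3 = I + A + A^2 = (1 + t)A \equiv 0 \pmod 3$ on the nose. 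This yields $d_1 \leq 6$.

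For the inductive step, suppose $S_{d_k} \equiv 0 \pmod{3^k}$ with $d_k \leq 2 \cdot 3^k$, and write $S_{d_k} = 3^k B$ for an integer matrix $B$. The telescoping identity shows $C := A^{d_k} \equiv I \pmod{3^k}$, so $C = I + 3^k D$ for some integer matrix $D$. Since $2k \geq k + 1$, the binomial expansion yields $C^j \equiv I + 3^k j D \pmod{3^{k+1}}$, and therefore
\[ S_{n d_k} = \Bigl(\sum_{j=0}^{n-1} C^j\Bigr) \cdot S_{d_k} \equiv 3^k n B \pmod{3^{k+1}}. \]
Taking $n = 1$ if $B \equiv 0 \pmod 3$ and $n = 3$ otherwise gives $d_{k+1} := n d_k$ with $S_{d_{k+1}} \equiv 0 \pmod{3^{k+1}}$ and $d_{k+1} \leq 3 d_k \leq 2 \cdot 3^{k+1}$, completing the induction.

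The key conceptual input is the observation $A^{d_k} \equiv I \pmod{3^k}$, which ensures that the partial sums $S_{n d_k}$ advance $\IZ$-linearly in $n$ modulo $3^{k+1}$; this reduces each inductive step to a scalar divisibility condition. The only nontrivial part is the base case: it forces the constant $6 = 2 \cdot 3$, and thus governs the overall bound $6^k$. That the element orders in $SL(2,\IF_3)$ lie in $\{1, 2, 3, 4, 6\}$ (the group being isomorphic to the binary tetrahedral group) can serve as an independent sanity check on the case analysis above.
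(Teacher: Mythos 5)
Your proof is correct, and the inductive step is genuinely different from the paper's. The base case is essentially the same: both invoke Cayley--Hamilton and split on $\tr A \bmod 3$, though you factor $S_6 = (I + A^3)(I+A+A^2)$ via the cleaner observation $A^3 \equiv -I$, while the paper uses $(I-A+A^2)(I+2A+2A^2+A^3)$. The inductive steps diverge: the paper applies the level-$k$ hypothesis to the \emph{new matrix} $A^{d_1}$, obtaining $d_2 \le 6^k$, and then multiplies lengths, $S^{(A)}_{d_1 d_2} = S^{(A)}_{d_1}\,S^{(A^{d_1})}_{d_2}$, giving $d_1 d_2 \le 6^{k+1}$. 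You instead keep $A$ fixed, observe via the telescoping identity $(A-I)S_{d_k}=A^{d_k}-I$ that $A^{d_k}\equiv I \pmod{3^k}$ (in fact $\bmod\ 3$ would already suffice), and then tack on a factor $\sum_{j<n}A^{jd_k}\equiv nI \pmod 3$ with $n\in\{1,3\}$. This "amplify by $3$" recursion propagates the base-case factor $6$ only once and thereafter multiplies by at most $3$, yielding the sharper bound $d_k \le 2\cdot 3^k \le 6^k$; the paper's recursion multiplies the factor $6$ at every level. The improved bound is not needed for the application, but your route isolates the structural fact $A^{d_k}\equiv I$ that the paper's factorization leaves implicit, and the step could be simplified further by always taking $n=3$ (tracking $B$ is unnecessary).
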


\begin{proof}
We first show the claim for $k = 1$.
Since $\det A = 1$, the Theorem of Cayley-Hamilton yields that $I - (\tr A) A + A^2 = 0$.
So we are done for $\tr A \equiv 2 \mod 3$.
If $\tr A \equiv 0 \mod 3$, then $I + A + A^2 + A^3 = (I + A^2)(I + A) \equiv 0 \mod 3$ and if $\tr A \equiv 1 \mod 3$, then $I + \ldots + A^5 = (I - A + A^2) (I + 2 A + 2 A^2 + A^3) \equiv 0 \mod 3$.

We now apply induction.
Assume that the statement is true for all numbers up to $k \geq 1$.
We will show that it then also holds for $k+1$.
Choose $1\leq d_1 \leq 6$ such that $I + A + \ldots + A^{d_1 - 1}  \equiv 0 \mod 3$.
By the induction hypothesis applied to $A^{d_1} \in SL(2, \IZ)$, there is a number $1 \leq d_2 \leq 6^k$ such that $I + A^{d_1} + A^{2d_1} + \ldots + A^{(d_2-1)d_1} \equiv 0 \mod 3^k$.
So
\begin{multline*}
 I + A  + \ldots + A^{d_1 d_2 - 1} \\
 = (I + A + \ldots + A^{d_1 - 1}) (I + A^{d_1} + \ldots + A^{(d_2-1) d_1}) \equiv 0 \quad \mod 3^{k+1},
\end{multline*}
and $1 \leq d_1 d_2 \leq 6^{k+1}$.
\end{proof}

\begin{Lemma} \label{Lem:Misexpandable}
Assume that $M$ is the total space of a $T^2$-bundle over a circle.
Then for every $n \geq 1$ there is a finite covering map $\pi_n : M \to M$ with the same domain and range such that for every embedded loop $\sigma \subset M$ the preimage $\pi_n^{-1} (\sigma)$ consists of at least $n$ loops.
\end{Lemma}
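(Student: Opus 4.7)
The plan is to realize $M$ as a quotient $\IR^3/\Gamma$, where $\Gamma = \IZ^2 \rtimes_A \IZ$ with monodromy $A \in SL(2, \IZ)$ (orientation-preserving since $M$ is orientable), and then to construct $\pi_n$ as a \emph{fiber-scaling self-cover}. For each integer $N \geq 1$ the sublattice $N\IZ^2 \subset \IZ^2$ is $A$-invariant, so $\Gamma_N := N\IZ^2 \rtimes_A \IZ$ is a subgroup of $\Gamma$ of index $N^2$. The cover $\IR^3/\Gamma_N \to \IR^3/\Gamma = M$ has total space diffeomorphic to $M$ itself via the fiber-rescaling $[x, t] \mapsto [x/N, t]$, which intertwines $\Gamma_N$ with $\Gamma$; this self-similarity of $T^2$-bundles under lattice dilation is what turns the fiber cover into a self-cover. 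Denote the resulting degree $N^2$ self-cover by $\pi_N : M \to M$.

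The next step is to count the components of $\pi_N^{-1}(\sigma)$ in terms of $[\sigma] \in \Gamma$. Since $\sigma$ is an embedded loop and $\pi_N$ is a finite cover, $\pi_N^{-1}(\sigma)$ is a disjoint union of embedded loops; by standard covering-space theory their number equals the number of orbits of $\langle [\sigma] \rangle$ acting by left multiplication on $\Gamma/\Gamma_N \cong (\IZ/N\IZ)^2$. Writing $[\sigma] = (w, m)$, a short calculation with the group law $(v_1, m_1)(v_2, m_2) = (v_1 + A^{m_1} v_2, m_1 + m_2)$ shows this action is the affine map
\[ T : v \longmapsto w + A^m v \pmod N, \qquad T^d(v) = A^{dm} v + \big( I + A^m + \ldots + A^{(d-1)m} \big) w \pmod N. \]

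Now I apply Lemma \ref{Lem:geomseriesinSL2Z} to $A^m \in SL(2, \IZ)$: for each $k \geq 1$ there is $1 \leq d \leq 6^k$ with $I + A^m + \ldots + A^{(d-1)m} \equiv 0 \pmod{3^k}$. Left-multiplying this identity by $A^m - I$ yields $A^{dm} \equiv I \pmod{3^k}$ as well, so with the choice $N = 3^k$ the iterate $T^d$ is the identity on $(\IZ/N\IZ)^2$. Hence every orbit has size at most $6^k$, and when $m \neq 0$ there are at least $9^k/6^k = (3/2)^k$ orbits. The residual cases are immediate: if $m = 0$ and $w \not\equiv 0 \pmod N$ each orbit has size dividing $N = 3^k$, giving at least $3^k$ orbits; if $[\sigma]$ is trivial the preimage consists of $N^2$ copies of $\sigma$.

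Given $n \geq 1$, one therefore chooses $k$ with $(3/2)^k \geq n$ and sets $\pi_n := \pi_{3^k}$. The main delicate point is the identification $\IR^3/\Gamma_N \cong M$ --- without this self-similarity the construction would only produce a cover by a different manifold rather than a self-cover --- after which the rest is a mechanical orbit count driven by Lemma \ref{Lem:geomseriesinSL2Z}.
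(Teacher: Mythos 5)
Your proof is correct and takes essentially the same route as the paper: realize $M$ as a mapping torus with monodromy $A \in SL(2,\IZ)$, form the degree-$N^2$ self-cover from the $A$-invariant sublattice $N\IZ^2 \subset \IZ^2$ with $N = 3^k$, and invoke Lemma \ref{Lem:geomseriesinSL2Z} to bound the covering degree along each preimage loop by $6^k$, yielding at least $(3/2)^k$ components. The only cosmetic difference is that you phrase the loop count explicitly as orbit-counting for the affine map $v \mapsto w + A^m v$ on $(\IZ/N)^2$ (together with the observation $A^{dm} \equiv I$), whereas the paper states the same bound as the first exponent $d$ with a conjugate of $[\sigma]^d$ lying in the index-$N^2$ subgroup.
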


\begin{proof}
The manifold $M$ is diffeomorphic to a mapping torus of an orientation-preserving diffeomorphism $\phi : T^2 \to T^2$.
The diffeomorphism $\phi$ acts on $\pi_1 (T^2) \cong \IZ^2$ by an element in $A \in SL(2, \IZ)$.
The fundamental group $\pi_1 (M)$ is isomorphic to a semidirect product of $\IZ$ with $\IZ^2$ coming from the action of $A$ on $\IZ^2$.
So $\pi_1(M)$ can be identified with $\IZ^2 \times \IZ$ with the following multiplication
\[ \left( \begin{pmatrix} x_1\\ y_1 \end{pmatrix}, z_1 \right) \cdot \left( \begin{pmatrix} x_2 \\ y_2 \end{pmatrix}, z_2 \right) = \left( \begin{pmatrix} x_1\\ y_1 \end{pmatrix} + A^{z_1} \begin{pmatrix} x_2 \\ y_2 \end{pmatrix}, z_1 + z_2 \right). \]
Since for any $m \geq 1$, the lattice $m \IZ^2 \subset \IZ^2$ is preserved by the action of $A$, the subset
\[ U_m = \left\{ \left( \begin{pmatrix}  m x \\  m y \end{pmatrix}, z \right) \;\; : \;\; x, y, z \in \IZ \right\} \subset \pi_1(M) \]
is a subgroup of $\pi_1(M)$ of index $m^2$.
Note that $U_m$ is isomorphic to $\pi_1(M)$, so if we consider the corresponding $m^2$-fold covering $\pi'_m : \widehat{M}_m \to M$, then $\widehat{M}_m$ is diffeomorphic to $M$.

It remains to compute the number of components of ${\pi'}_m^{-1} (\sigma)$ and to show that this number can be made arbitrarily large for the right choice of $m$.
Set $m = 3^k$ for a number $k \geq 1$, which we will determine later.
Let $\widehat{\sigma} \subset  \pi^{' -1}_m (\sigma)$ be an arbitrary loop in the preimage of $\sigma$.
Then we can find an element $g = ((x, y), z) \in \pi_1(M)$ in the conjugacy class of $[\sigma]$ such that $\sigma$ represents $g$ in $M$ and such that $\widehat{\sigma}$ represents a multiple of $g$ that is contained in $U_m = \pi_1 (\widehat{M}_m)$ in $\widehat{M}_m$.
Then the restriction $\pi'_m |_{\widehat{\sigma}} : \widehat{\sigma} \to \sigma$ is a covering of a circle and its degree is equal to the first exponent $d_0 \geq 1$ for which $g^{d_0} \in U_m$.
We will show that $d_0 \leq 6^k$.
To do this observe that for all $i \geq 1$
\[ g^i =  \left( \begin{pmatrix} x\\ y \end{pmatrix} + A^{z} \begin{pmatrix} x \\ y \end{pmatrix} + \ldots + A^{(i-1) z} \begin{pmatrix} x \\ y \end{pmatrix}, i z \right). \]
By Lemma \ref{Lem:geomseriesinSL2Z}, there is a number $1 \leq d \leq 6^k$ such that the first two entries of $g^d$ are divisible by $m = 3^k$ and hence $g^d \in U_m$.
This implies the desired bound.

Since the choice of $\widehat{\sigma}$ was arbitrary, we conclude that every loop in ${\pi'}_m^{-1} (\sigma)$ covers $\sigma$ at most $6^k$ times and hence the number of such loops is at least
\[ \frac{m^2}{6^k} = \frac{3^{2k}}{6^k} = \Big( \frac32 \Big)^k. \]
So choosing $k$ such that $( \frac32 )^k > n$ yields the desired result.
\end{proof}

\begin{proof}[Proof of Proposition \ref{Prop:maincombinatorialresult}(b)]
We only have to consider the case in which $M$ is a $T^2$-bundle over a circle, since for any finite cover $\widehat\pi : \widehat{M} \to M$ we can compose the maps $\widehat{f}_1, \widehat{f}_2, \ldots : V \to \widehat{M}$ obtained for $\widehat{M}$ with $\widehat\pi$ to obtain the maps $f_1 = \widehat\pi \circ \widehat{f}_1, f_2 = \widehat\pi \circ \widehat{f}_2, \ldots$.

We first establish the assertion for the case $n=1$.
Fix a $T^2$-bundle projection $p : M \to S^1$ and let $T = p^{-1} (\{ \text{pt} \}) \subset M$ be a torus fiber.
Then $M \setminus T$ is diffeomorphic to $T^2 \times (0,1)$ and we can find a smooth local diffeomorphism $\Phi : T^2 \times [0,1] = S^1 \times S^1 \times [0,1] \to M$ such that $\Phi |_{T^2 \times (0,1) }$ is a diffeomorphism onto $M \setminus T$ and such that $\Phi$ restricted to $T^2 \times \{ 0 \}$ and $T^2 \times \{ 1 \}$ is a diffeomorphism onto $T$.
Moreover, we may assume that $\Phi$ is chosen in such a way that the map $(\Phi |_{T^2 \times \{ 1 \} } )^{-1} \circ \Phi |_{T^2 \times \{ 0 \}} : T^2 \times \{ 0 \} \to T^2 \times \{ 0 \}$ is affine with respect to the standard affine structure on $T^2 = S^1 \times S^1$.
Let now $z \in S^1$ be an arbitrary point and set 
\[ V = T \cup \Phi ( S^1 \times \{ z \} \times [0,1] ) \cup \Phi (\{ z \} \times S^1 \times [0,1]). \]
Then $M \setminus V$ is diffeomorphic to a 3-ball and $V$ can be given the structure of a simplicial complex.
Fix such a structure for the rest of the proof and let $f_1 : V \to M$ be the inclusion map of $V$.

Consider the universal covering $\pi : \td{M} \to M$ and choose a lift $F \subset \td{M}$ of $M \setminus V$.
Then $F$ is a connected fundamental domain.
Denote furthermore by $\td{V} = \td\pi^{-1} (V) \subset \td{M}$ the preimage of $V$ and by $\td{f}_1 : \td{V} \to \td{M}$ its inclusion map.
The complement of $\td{V}$ in $\td{M}$ consists of open sets whose closures $Q \subset \td{M}$ are finite polyhedra and which we call \emph{cells}.
Denote the set of cells again by $\mathcal{Q}$.
Observe that every cell is the image of $F$ under a deck transformation of $\pi : \td{M} \to M$.
We say that two cells $Q_1, Q_2 \in \mathcal{Q}$ are adjacent if their intersection contains a point of $\td{V} \setminus \td{V}^{(1)}$.
Choose a cell $Q_0 \in \mathcal{Q}$ and consider for each $k \geq 0$ the union $B_k (Q_0)$ of all cells that have distance $\leq k$ in the adjacency graph of $\mathcal{Q}$.
Then $S_k = \partial B_k(Q_0) \subset \td{V}$ is the image of a continuous map $s_k : \Sigma_k \to \td{V}$ where $\Sigma_k$ is an orientable surface such that $s_k$ is an embedding on $\Sigma_k \setminus s_k^{-1} (\td{V}^{(1)})$.

Choose a component $\td\sigma \subset \pi^{-1} (\sigma)$ that intersects $Q_0$.
Then $\td\sigma \subset \td{M}$ is a non-compact, properly embedded line and there is a non-compact ray $\td\sigma^+ \subset \td\sigma$ that starts in $Q_0$.
This implies that $\td\sigma^+$ has non-zero intersection number with the map $\td{f}_1 \circ s_k : \Sigma_k \to \td{M}$ for each $k \geq 1$.

Consider now the continuous map $f'_1 : V \to M$ that is homotopic to $f_1 : V \to M$ via a homotopy $H : V \times [0,1] \to M$ and use this homotopy to construct a lift $\td{f}'_1 : \td{V} \to \td{M}$.
Let $N$ be a bound on the number of cells that each arc of the form $t \mapsto H (\cdot, t)$ intersects.
Then $H$ induces a homotopy from $\td{f}_1 \circ s_N : \Sigma_N \to \td{M}$ to $\td{f}'_1 \circ s_N : \Sigma_N \to \td{M}$, which is disjoint from $Q_0$.
So both maps have the same, non-zero, intersection number with $\td\sigma^+$.
We conclude that $\td{f}'_1( s_N ( \Sigma_N )) \subset \td{f}'_1 ( \td{V})$ intersects $\td\sigma$.
Hence, $f'_1(V)$ intersects $\sigma$.

We finally show the assertion for all remaining $n \geq 2$.
Fix $n$, consider the covering map $\pi_n : M \to M$ from Lemma \ref{Lem:Misexpandable} and set
\[ f_n = \pi_n \circ f_1 : V \to M. \]
Moreover, the preimage $\sigma_n= \pi_n^{-1} (\sigma)$ is the union of at least $n$ loops which all have the property that all its non-trivial multiples are non-contractible in $M$.

Consider a map $f'_n : V \to M$ and a homotopy between $f_n$ and $f'_n$.
This homotopy can be lifted via $\pi_n : M \to M$ to a homotopy between $f_1$ and a map $f'_1 : V \to M$ such that $f'_n = \pi_n \circ f'_1$.
We now have
\[ {f'}_n^{-1} (\sigma) ={f'}^{-1}_1 ( \pi_n^{-1} (\sigma)) =  {f'}^{-1}_1 ( \sigma_n ) = \bigcup_{\sigma' \subset \sigma_n} {f'}^{-1}_n (\sigma'), \]
where the last union is to be understood as the union over all loops $\sigma'$ of $\sigma_n$.
By our previous conclusion, ${f'}^{-1}_1 (\sigma') \neq \emptyset$ for all such $\sigma'$ and all such sets are pairwise disjoint.
This proves the desired result.
\end{proof}

\end{document}